\theoremstyle{plain}
\newtheorem{thm}{Theorem}[section]
\newtheorem{prop}[thm]{Proposition}
\newtheorem{lem}[thm]{Lemma}
\newtheorem{cor}[thm]{Corollary}
\newtheorem*{thm*}{Theorem}
\newtheorem{prob}{Problem}[section]
\theoremstyle{definition}
\newtheorem{defi}[thm]{Definition}
\newtheorem{nota}{Notation}
\newtheorem*{nota*}{Notation}
\newtheorem*{conv}{Conventions}
\theoremstyle{remark}
\newtheorem{ex}[thm]{Example}
\newtheorem*{claim*}{Claim}
\newtheorem*{claimproof*}{Proof of Claim}
\numberwithin{equation}{subsection}
\newcommand{\Z}{\mathbb{Z}}
\newcommand{\Q}{\mathbb{Q}}
\newcommand{\C}{\mathbb{C}}
\newcommand{\A}{\mathbb{A}}
\renewcommand{\P}{\mathbb{P}}
\newcommand{\F}{\mathbb{F}}
\newcommand{\vp}{\varphi}
\newcommand{\mc}{\mathcal}
\newcommand{\tb}{\textbf}
\newcommand{\ol}{\overline}
\newcommand{\wt}{\widetilde}
\newcommand{\ra}{\rightarrow}
\newcommand{\la}{\leftarrow}
\newcommand{\dra}{\dashrightarrow}
\newcommand{\xra}{\xrightarrow}
\newcommand{\xla}{\xleftarrow}
\DeclareMathOperator{\Sym}{\mathrm{Sym}}
\DeclareMathOperator{\PProj}{\tb{Proj}}
\DeclareMathOperator{\Supp}{Supp}
\DeclareMathOperator{\NE}{NE}
\DeclareMathOperator{\Pic}{\mathrm{Pic}}
\DeclareMathOperator{\eu}{\chi_{\mathrm{top}}}
\DeclareMathOperator{\Bl}{\mathrm{Bl}}
\DeclareMathOperator{\Sing}{\mathrm{Sing}}
\title[Compactifications into quadric fibrations]{On compactifications of affine homology 3-cells into quadric fibrations}
\author[M. Nagaoka]{Masaru Nagaoka}
\address{Graduate School of Mathematical Sciences\\The University of Tokyo\\3-8-1 Komaba\\Meguro-ku, Tokyo 153-8914, Japan}
\email{nagaoka@ms.u-tokyo.ac.jp}
\subjclass[2010]{Primary: 14R10; Secondary: 14E30, 14J30}
\keywords{affine homology $3$-cells; compactifications; quadric fibrations}
\begin{document}
\maketitle
\begin{abstract}
In this paper we deal with compactifications of affine homology $3$-cells into quadric fibrations such that the boundary divisors contain fibers.
We show that all such affine homology $3$-cells are isomorphic to the affine $3$-space $\A^{3}$.
Moreover, we show that all such compactifications can be connected by explicit elementary links preserving $\A^{3}$ to the projective $3$-space $\P^{3}$.
\end{abstract}
\tableofcontents
\setcounter{section}{0}

\section{Introduction.}\label{sec:intro}

Throughout the paper we work over the field of complex numbers $\C$.

In \cite{Hir54}, F.\,Hirzebruch raised the problem to classify all the compactifications of the affine space $\A^{n}$ into smooth complete complex manifolds $X$ with the second Betti number $B_{2}(X)=1$. 
A trivial example of such compactifications is the pair of the projective space and its hyperplane $(\P^{n}, \P^{n-1})$.
This is the unique compactification of $\A^{n}$ when $n=1$, as is easy to check, and when $n=2$, which is proved by R.\,Remmert and T.\,Van de Ven \cite{RV}.
Non-trivial examples appear in dimension three, 
and when $X$ is projective, the complete classification of compactifications was achieved by M.\,Furushima, N.\,Nakayama, Th.\,Peternell and M.\,Schneider \cite{Fur86, Fur90, Fur93b, Fur93a, F-N89b, F-N89a, Pet89, P-S88}.
When $B_{2}=2$, non-trivial compactifications appear even in dimension two, which was studied by S.\,Mori \cite{Mor2}.

In this paper we are interested in compactifications of \textit{affine homology $n$-cells} into smooth projective $n$-fold.
By an affine homology $n$-cell we mean a smooth affine $n$-fold $U$ such that $H_{i}(U, \Z)=0$ for $i>0$. 
Furushima \cite{Fur00} pointed out that we can apply the same arguments in \cite{Fur86, Fur90, Fur93b, Fur93a, F-N89b, F-N89a, Pet89, P-S88} to classify all the compactifications of affine homology 3-cells into smooth Fano $3$-folds with $B_{2}=1$.
As a result, when a homology $3$-cell $U$ is embedded into a smooth Fano $3$-fold with $B_{2}=1$, it holds that $U \cong \A^{3}$.
On the basis of this result, T.\,Kishimoto \cite{Kis} and the author \cite{Nag} investigated compactifications of contractible affine $3$-folds into smooth Fano $3$-folds with $B_{2}=2$, where we note that a contractible affine $n$-fold is an affine homology $n$-cell with trivial fundamental group.

In this paper, we consider the corresponding problem in the case where the image of an extremal contraction is a curve. Namely:

\begin{prob}\label{prob:main}
Let $f \colon X \to C$ be an extremal contraction of relative Picard number one from a smooth projective $n$-fold $X$ to a smooth projective curve $C$. Let $U \subset X$ be an open subscheme.
\begin{enumerate}

\item[\textup{(1)}] If $U$ is an affine homology $n$-cell, then is it isomorphic to $\A^{n}$?

\item[\textup{(2)}] If $U$ is isomorphic to $\A^{n}$, then can we construct an explicit birational map from $X$ to a compactification of $\A^{n}$ with Picard number one preserving $U \cong \A^{n}$?

\end{enumerate}
\end{prob}

In this problem, we set not only $\P^{n}$ but also all compactification of $\A^{n}$ with Picard number one as the target of birational maps preserving $\A^{n}$.
It is because there is a copy of $\A^{3}$ in the quintic del Pezzo $3$-fold with the boundary divisor normal which we can regard naturally as an affine modification (for the detail, see \cite{K-Z}) of an another copy of $\A^{3}$ in $\P^{3}$ via the birational map constructed in \cite{Fur00}.

We note that even when $n=2$, Problem \ref{prob:main} (1) have a negative answer in general.
In fact, T.\ tom Dieck and T.\ Petrie \cite{tDP} showed that there are infinitely many contractible affine surfaces of logarithmic Kodaira dimension one in the blow-up of $\P^{2}$ at a point.
However, if we assume the following condition, the problem have an affirmative answer in the case where $n=2$.

\begin{defi}\label{defi:compatible}
Let $f \colon X \to C$ and $U$ be as in Problem \ref{prob:main}. 
Let $D \coloneqq X \setminus U$ be the boundary divisor.
We say that $(X, D, f)$ is \textit{a compactification of $U$ compatible with $f$} if $D$ contains a $f$-fiber.
When $D_{f} \subset D$ is a $f$-fiber and $D_{h} \subset D$ is the other components,
we also call $(X, D_{h}, D_{f})$ \textit{a compactification of $U$ compatible with $f$}.
\end{defi}

By \cite[Proposition 2.1]{van} and the Poincare duality, $D_{h}$ in the setting of Definition \ref{defi:compatible} is a prime divisor.
Suppose that $(X, D_{h}, D_{f})$ is a compactification of homology $2$-cell $U$ compatible with $\P^{1}$-bundle. 
By \cite[Corollary 1.20]{Fuj}, it holds that $D_{h}$ is a $f$-section.
Hence we have $U \cong \A^{2}$ since $f|_{U}$ is an $\A^{1}$-bundle over $\A^{1}$.

Problem \ref{prob:main} (2) was solved when $n=2$ by Mori \cite{Mor2}. 
He introduced three kinds of explicit birational transformations preserving $\A^{2}$ between Hirzebruch surfaces, which are called J-, R-, and L-transform. 
He solved the problem as in the following theorem:

\begin{thm}
Let $f \colon X \to \P^{1}$ be a $\P^{1}$-bundle and $D$ a reduced effective divisor on $X$ such that $X \setminus D \cong \A^{2}$. 
\begin{enumerate}

\item[\textup{(1)}] There exists a compactification $(X_{1}, D_{1}, f_{1})$ of $\A^{2}$ compatible with a $\P^{1}$-bundle $f _{1}\colon X_{1} \to \P^{1}$ and a birational map $g_{1} \colon X \dra X_{1}$ preserving $\A^{2}$ which is a finite composition of J-, R-, and L-transforms. 

\item[\textup{(2)}] Let $X_{2} \coloneqq \F_{1}$ be a Hirzebruch surface of degree $1$ with the $\P^{1}$-bundle structure $f_{2}$. 
Let $D_{2}$ be the union of an $f_{2}$-fiber and the minimal section. 
Then there exists a birational map $g_{2} \colon X_{1} \dra X_{2}$ preserving $\A^{2}$ which is a finite composition of elementary transformations of $\P^{1}$-bundles.

\end{enumerate}
Summarizing, we have the following diagram of birational maps preserving $X \setminus D \cong \A^{2}$:
\begin{equation}
\xymatrix@C=15pt@R=15pt{
(X, D) \ar@{.>}[r]^-{g_{1}} \ar[d]_{f}
&(X_{1}, D_{1}) \ar@{.>}[r]^-{g_{2}} \ar[d]_{f_{1}}
&(X_{2}, D_{2}) \ar[r]^-{g_{3}} \ar[d]_{f_{2}}
&(\P^{2}, \P^{1}) 
\\ \P^{1} \ar@{=}[r]
&\P^{1} \ar@{=}[r]
&\P^{1},
& 
}
\end{equation}
where $g_{3} \colon X_{2} \to \P^{2}$ is the blow-down of the minimal section.
\end{thm}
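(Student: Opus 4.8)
The plan is to read off the combinatorial type of the boundary $D$ on the Hirzebruch surface $X$, to create a fiber inside the boundary using the three transforms (which yields (1)), and then to normalize the resulting compatible compactification down to $\F_{1}$ by elementary transformations of $\P^{1}$-bundles (which yields (2)).

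First I would determine the shape of $D$. Since $X$ is rational and $X\setminus D\cong\A^{2}$ is contractible, the divisor $D$ supports an ample class, hence is connected, and the complement being simply connected constrains its dual graph to be a tree. Comparing Euler characteristics gives $\eu(D)=\eu(X)-\eu(\A^{2})=4-1=3$. Since a connected irreducible curve has $\eu\le 2$ while a chain of $r$ smooth rational curves has $\eu=r+1$, the value $\eu(D)=3$ forces $D$ to be a chain of exactly two smooth rational curves meeting transversally at one point. Each of the two components is then either an $f$-fiber or a horizontal multisection, and I would organize the whole proof around this dichotomy.

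For part (1), if one component of $D$ is already an $f$-fiber, then $(X,D,f)$ is compatible and $g_{1}$ is the identity. Otherwise both components are horizontal, so $f|_{X\setminus D}\colon\A^{2}\to\P^{1}$ is surjective and a general fiber $F$ meets $D$ in $D\cdot F\ge 2$ points; I would take the horizontal degree $D\cdot F$, together with auxiliary data recording how $D$ meets the minimal section of $X$, as the complexity to be lowered. The core of the argument is to check that, for each non-compatible configuration, exactly one of the J-, R-, or L-transform — each of which blows up a distinguished point of $D$ lying over a suitable fiber and then contracts the strict transform of that fiber — keeps the $\P^{1}$-bundle structure, preserves $X\setminus D\cong\A^{2}$ (because the center lies on $D$ and the contracted curve is absorbed into the new boundary), and strictly decreases the complexity. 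Iterating until no further decrease is possible produces a compatible $(X_{1},D_{1},f_{1})$ and the composite $g_{1}$.

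For part (2), a compatible $D_{1}=D_{f}+D_{h}$ has $D_{f}$ a fiber and, by \cite[Corollary 1.20]{Fuj} exactly as in the discussion following Definition \ref{defi:compatible}, $D_{h}$ an $f_{1}$-section; thus $f_{1}|_{X_{1}\setminus D_{1}}$ is an $\A^{1}$-bundle over $\A^{1}$. An elementary transformation of the $\P^{1}$-bundle centered at a suitable point of $D_{1}$ (at the node $D_{f}\cap D_{h}$ to lower, at a point of $D_{f}\setminus D_{h}$ to raise) changes the self-intersection $D_{h}^{2}$ by $\mp1$ while keeping $D_{1}$ the union of a fiber and a section and keeping the complement isomorphic to $\A^{2}$; I would perform such transformations to drive $D_{h}^{2}$ to $-1$, reaching $X_{2}=\F_{1}$ with $D_{2}$ the union of a fiber and the minimal section. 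Contracting the minimal section by $g_{3}$ sends $(\F_{1},D_{2})$ to $(\P^{2},\P^{1})$ and gives the asserted diagram. The hard part will be the bookkeeping in part (1): one must prove that a transform with center on $D$ always exists in the non-compatible case, that it strictly lowers the chosen complexity, and that only the three transforms J, R, L ever occur. Establishing this exhaustiveness — essentially a finite case analysis in terms of the intersection pattern of the two horizontal components with the fibers and with the minimal section — is the delicate point, whereas the elementary-transformation calculus in part (2) and the concluding contraction to $\P^{2}$ are then routine.
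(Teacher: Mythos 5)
The paper does not actually prove this statement: it is quoted as Mori's theorem from \cite{Mor2}, established there via the J-, R-, and L-transforms, and no proof appears anywhere in the present text, so there is no in-paper argument to compare yours against; I can only judge the proposal on its own. So judged, part (2) is essentially complete: once $D_{1}$ is a fiber plus a section, an elementary transformation centered at the node (resp.\ at a point of the fiber off the section) changes the self-intersection of the section by $-1$ (resp.\ $+1$) while preserving the complement, and since a section of self-intersection $-1$ on a Hirzebruch surface exists only on $\F_{1}$ and is then the minimal section, driving that number to $-1$ lands exactly at $(X_{2},D_{2})$. Two caveats on your initial reduction: the count $\eu(D)=\eu(X)-\eu(\A^{2})=3$ by itself does not force $D$ to be two \emph{smooth} rational curves meeting \emph{transversally} --- a unibranch singular component, or a tangential intersection at one point, gives the same Euler number --- so you need the Picard-rank argument (two components generating $\Pic X\cong\Z^{2}$, connectedness of $D$) and must keep the non-transversal and singular configurations in play; these are precisely the configurations the three transforms exist to handle.

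The genuine gap is in part (1). You never define the J-, R-, and L-transforms, never specify the complexity function beyond ``the horizontal degree together with auxiliary data,'' and never verify the three claims on which everything rests: that in every non-compatible configuration at least one of the three transforms is applicable, that it preserves $X\setminus D\cong\A^{2}$, and that it strictly decreases a well-founded invariant so that the process terminates with a fiber contained in the boundary. You explicitly defer this as ``the delicate point,'' but this exhaustive case analysis \emph{is} the content of the theorem; without it the argument is a plan rather than a proof. In particular, the assertion that each transform has a distinguished center on $D$ and that contracting the relevant fiber neither destroys the $\P^{1}$-bundle structure nor produces a boundary that fails to absorb the contracted curve must be checked against every intersection pattern of the two horizontal components (including tangencies and unibranch singular points), and none of that checking is carried out.
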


In this paper we consider Problem \ref{prob:main} when $n=3$ and $(X, D_{h}, D_{f})$ is compatible with $f$. 
In this case, $f$ is a del Pezzo fibration.
When $f$ is a $\P^{2}$-bundle, then the problem is easy by the same reason as when $n=2$ (see \S \ref{sec:mainA}).
However, if the degree of a general $f$-fiber is smaller than $9$, then the problem is not obvious since a general ($f|_{U}$)-fiber often differs from $\A^{2}$.

The main purpose of this paper is to give a solution to Problem \ref{prob:main} for compactifications compatible with a quadric fibration, i.e.\,when the degree of a general fiber is $8$.
Our main result consists of three theorems. 
One is the following theorem, which is the solution to Problem \ref{prob:main} (1).

\begin{thm}\label{thm:main0}
Let $q \colon Q \ra C$ be a quadric fibration, $D_{h}$ a reduced effective divisor on $Q$, and $D_{f}$ a $q$-fiber.
Then the following are equivalent.
\begin{enumerate}
\item[\textup{(1)}] The complement $Q \setminus (D_{h} \cup D_{f})$ is an affine homology $3$-cell. 
\item[\textup{(2)}] It holds that $C \cong \P^{1}$ and $Q \setminus (D_{h} \cup D_{f}) \cong \A^{3}$.
\end{enumerate}
\end{thm}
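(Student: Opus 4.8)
The implication $(2) \Rightarrow (1)$ is immediate, since $\A^{3}$ is a smooth affine variety with $H_{i}(\A^{3}, \Z) = 0$ for all $i > 0$. The content is the converse, so assume $(1)$, set $U \coloneqq Q \setminus (D_{h} \cup D_{f})$, let $p \coloneqq q(D_{f}) \in C$, and write $C^{\circ} \coloneqq C \setminus \{p\}$. Since the whole fibre over $p$ is deleted, $q$ restricts to a dominant morphism $f \coloneqq q|_{U} \colon U \to C^{\circ}$, and as $U$ is affine every fibre of $f$, being a closed subscheme of $U$, is affine. For general $t \in C^{\circ}$ the quadric $Q_{t} \coloneqq q^{-1}(t)$ is a smooth quadric surface $\cong \P^{1} \times \P^{1}$, and the fibre of $f$ over $t$ is $U_{t} = Q_{t} \setminus \Gamma_{t}$ with $\Gamma_{t} \coloneqq D_{h} \cap Q_{t}$; in particular $U_{t}$ is connected, being the complement of a curve in an irreducible surface.

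The first step is to show $C \cong \P^{1}$. Because $f$ is dominant with connected general fibre, the five-term exact sequence of the Leray spectral sequence yields an injection $f^{*} \colon H^{1}(C^{\circ}, \Q) \hookrightarrow H^{1}(U, \Q)$. By $(1)$ the target vanishes, so $H^{1}(C^{\circ}, \Q) = 0$; since $C^{\circ} = C \setminus \{p\}$ has first Betti number $2\,g(C)$, this forces $g(C) = 0$, i.e.\ $C \cong \P^{1}$ and $C^{\circ} \cong \A^{1}$.

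The second, and decisive, step is to identify $U_{t}$. Affineness of $U_{t} = (\P^{1} \times \P^{1}) \setminus \Gamma_{t}$ forces $\Gamma_{t}$ to support an ample class, so both rulings are met. The plan is to read off the homology of the fibre from that of $U$ through the Leray spectral sequence of $f$ over the base $\A^{1}$: by Artin vanishing only the columns $p = 0, 1$ survive, so $H^{n}(U,\Q) \cong H^{0}(\A^{1}, R^{n}f_{*}\Q) \oplus H^{1}(\A^{1}, R^{n-1}f_{*}\Q)$, and I would impose $(1)$ to constrain the monodromy local system $R^{q}f_{*}\Q$ on $\A^{1} \setminus S$, where $S$ is the finite set of points over which $f$ fails to be a topological fibre bundle, together with the contributions of the degenerate fibres over $S$. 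Comparing $\eu(U_{t}) = 2 + 2(a-1)(b-1)$ for $\Gamma_{t}$ smooth of bidegree $(a,b)$ against $\eu(U) = 1$, and refining this by the full vanishing of reduced homology, should exclude the smooth conic ($\Gamma_{t}$ of bidegree $(1,1)$, which gives $\eu(U_{t}) = 2$ and $H_{2}(U_{t}) \cong \Z$) and all higher cases, leaving only the configuration in which $\Gamma_{t}$ is a tangent-plane section, i.e.\ a pair of rulings, one from each family, meeting in a single point. For such $\Gamma_{t}$ one has $U_{t} \cong (\P^{1}\setminus\{a\}) \times (\P^{1}\setminus\{b\}) = \A^{2}$; moreover, since $D_{h}$ is a prime divisor (as noted after Definition \ref{defi:compatible}), the two rulings must be interchanged by the monodromy around the points of $S$ where $Q_{t}$ degenerates to a quadric cone, which is precisely what makes $D_{h} \to C$ an irreducible bisection.

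Finally, granting that $f \colon U \to \A^{1}$ is an $\A^{2}$-fibration, I would invoke Sathaye's theorem that an $\A^{2}$-fibration over a smooth affine curve is an $\A^{2}$-bundle, which over $\A^{1} = \operatorname{Spec}\C[t]$ is trivial; hence $U \cong \A^{1} \times \A^{2} = \A^{3}$, proving $(1) \Rightarrow (2)$. I expect the main obstacle to be exactly the passage from the \emph{general} fibre to \emph{every} fibre: controlling the degenerate fibres of $f$ — those over points where $Q_{t}$ is a quadric cone or where $\Gamma_{t}$ acquires extra components or singularities — so as to confirm that they too are $\A^{2}$, or at least that none of them obstructs $f$ from being an $\A^{2}$-fibration. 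Making the monodromy bookkeeping of the third step rigorous in the presence of the local systems $R^{1}f_{*}\Q$ and $R^{2}f_{*}\Q$, and thereby ruling out every competing affine complement of a curve in $\P^{1}\times\P^{1}$, is the technically delicate core; the curve step and the concluding step are then comparatively formal.
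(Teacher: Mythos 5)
There is a genuine gap, and it is fatal to the core of your argument: your ``decisive step'' asserts that the vanishing of the reduced homology of $U$ forces the general fibre $U_{t}=Q_{t}\setminus\Gamma_{t}$ to be the complement of a pair of rulings, hence $\A^{2}$. This is false. When $D_{h}$ is normal (the hard case of the theorem), $\Gamma_{t}=D_{h}|_{Q_{t}}$ is a \emph{smooth} $(1,1)$-curve for general $t$, so $U_{t}\cong(\P^{1}\times\P^{1})\setminus\Delta$ is the smooth affine quadric surface, with $\eu(U_{t})=2$ and $H_{2}(U_{t},\Z)\cong\Z$ --- and yet $U\cong\A^{3}$. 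Examples \ref{ex:3} and \ref{ex:4} of the paper realize exactly this situation, and the introduction explicitly warns that for del Pezzo fibrations of degree $<9$ the general $(f|_{U})$-fibre ``often differs from $\A^{2}$.'' Your proposed exclusion of the smooth-conic case cannot be made rigorous because nothing in the Leray spectral sequence forbids it: the generator of $H^{2}(U_{t},\Q)$ is the difference of the two ruling classes, the monodromy around a quadric-cone fibre of $q$ swaps the rulings and hence acts by $-1$, so $R^{2}f_{*}\Q$ is a nontrivial rank-one local system on the punctured line and contributes nothing to $H^{*}(U,\Q)$. (The pair-of-rulings picture you want is precisely the case where $D_{h}$ is \emph{non-normal}, singular along the section of intersection points.) Consequently the concluding appeal to Sathaye's theorem on $\A^{2}$-fibrations never gets off the ground; moreover even in the non-normal case the fibre over a point where $Q_{t}\cong\Q^{2}_{0}$ is a singular surface, so $f|_{U}$ is not an $\A^{2}$-fibration on the nose.

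For comparison, the paper's route is entirely different after the (correct, and essentially equivalent to yours) reduction $C\cong\P^{1}$: it first pins down $D_{h}$ as a prime divisor with $2D_{h}\sim_{C}-K_{Q}$ (Lemma \ref{lem:sub-2}), then splits on normality of $D_{h}$. In the non-normal case it blows up the singular section and contracts to a $\P^{2}$-bundle (Lemma \ref{lem:q-p}), landing in the easy complement of a sub-$\P^{1}$-bundle union a fibre. In the normal case the Euler-characteristic bookkeeping is done on $D_{h}$ and $D_{f}$ (not on the fibres of $f|_{U}$), yielding $h^{1,2}(Q)=0$, $D_{f}\cong\Q^{2}_{0}$ and smoothness of the $(q|_{D_{h}})$-fibres; the isomorphism $U\cong\A^{3}$ is then obtained by exhibiting $U$ as an affine modification of $\A^{3}$ along $(\A^{1}\subset\A^{2})$ and invoking the Abhyankar--Moh/Bhatwadekar--Dutta theorem. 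If you want to salvage a fibrewise approach you would have to work with the affine-quadric fibres directly, which is considerably harder than the elementary-link argument.
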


The others are Theorem \ref{thm:maindiag-A} and \ref{thm:maindiag-B}, which give a solution to Problem \ref{prob:main} (2).
Before stating the theorems, we introduce some examples of compactifications of $\A^{3}$ compatible with del Pezzo fibrations and explicit birational maps preserving $\A^{3}$ from them to $\P^{3}$.

\begin{ex}\label{ex:0-A}
Let $g_{3} \colon P' \to \P^{3}$ be the blow-up along a line and $D_{h,2}$ the exceptional divisor.
Then the linear system $|g_{3}^{*}\mc{O}_{\P^{3}}(1)-D_{h,2}|$ defines a $\P^{2}$-bundle structure $p' \colon P' \to \P^{1}$.
Let $D_{f,2}$ be a $p'$-fiber.
Then $(P', D_{h,2}, D_{f,2})$ is a compactification of $\A^{3}$ compatible with $p'$ because $P' \setminus (D_{h, 2} \cup D_{f,2}) \cong \P^{3} \setminus g_{3*}D_{f,2} \cong \A^{3}$.
Hence $g_{3} \colon P' \to \P^{3}$ is a birational map preserving $\A^{3}$.
\end{ex}
\begin{ex}\label{ex:0-B}
 Let $h_{2} \colon Q' \to \Q^{3}$ be the blow-up of the smooth quadric $\Q^3 \subset \P^{4}$ along a smooth conic and $D_{h}'$ the exceptional divisor.
Then 
the linear system $|h_{2}^{*}\mc{O}_{\Q^{3}}(1)-D_{h}'|$ defines a quadric fibration structure $q' \colon Q' \to \P^{1}$.
Let $D_{f}'$ be a singular $q'$-fiber, which is isomorphic to the quadric cone $\Q^{2}_{0} \subset \P^{3}$.
Then $h_{2}$ induces an isomorphism $Q' \setminus (D_{h}' \cup D_{f}') \cong \Q^{3} \setminus \Q^{2}_{0}$.
Let $h_{3} \colon \Q^{3} \dra \P^{3}$ be the projection from the vertex of $\Q^{2}_{0}$.
Then, by the discussion in \cite[pp.117--119]{Fur00}, $h_{3}$ induces an isomorphism $\Q^{3} \setminus \Q^{2}_{0} \cong \P^{3} \setminus \P^{2} \cong \A^{3}$.
Hence $(Q', D_{h}', D_{f}')$ is a compactification of $\A^{3}$ compatible with $q'$ and $h_{3} \circ h_{2} \colon Q' \dra \P^{3}$ is a birational map preserving $\A^{3}$.
\end{ex}

To solve Problem \ref{prob:main} (2) for all the compactifications of $\A^{3}$ compatible with quadric fibrations, we will use the technique of elementary links.

\begin{defi}\label{defi:elem}
Let $X$ be a smooth $3$-fold and $\sigma \colon X \ra C$ be an extremal contraction of relative Picard number one. 
Let $r \subset X$ be a smooth curve (or $x \in X$ be a point).
Denote by $\vp \colon \wt X \ra X$ the blow-up of $X$ with center $r$ (resp.\ with center $x$). 
We assume that $-K_{\wt X}$ is $(\sigma \circ \vp)$-ample.
Then there exists the unique contraction $\psi \colon \wt X \ra Y$ of the other $K_{\wt X}$-negative ray in $\ol{\NE}(\wt X/C)$.
Let $\tau \colon Y \ra C$ be the induced morphism.
\begin{equation}\label{diag:elem}
\xymatrix@!C=15pt@R=15pt{
&\wt{X} \ar[dl]_-{\vp} \ar[dr]^-{\psi}
&
\\X \ar[d]_-{\sigma}
&
&Y \ar[d]^-{\tau}
\\C \ar@{=}[rr]
& 
&C.
}
\end{equation}
When $\psi$ is birational, we call the diagram (\ref{diag:elem}) \textit{the elementary link with center along} $r$ (resp.\,\textit{at} $x$). 
In this paper, the pushforward of the \nolinebreak$\vp$-exceptional divisor by $\psi$ is called \textit{the exceptional divisor of the elementary link}.
We write it $X \la \wt X \ra Y$ or $X \dra Y$ for short when the base variety is obvious.
\end{defi} 

We note that this is a particular case of elementary links of type I\hspace{-.1em}I in \cite[Definition 3.4]{Cor} and that the exceptional divisor of the elementary link is actually a divisor by \cite[Proposition 3.5]{Cor}.

With the above notation, the other main theorems are stated as follows.
\begin{thm}\label{thm:maindiag-A}
Let $(Q, D_{h}, D_{f})$ be a compactification of $\A^{3}$ compatible with a quadric fibration $q \colon Q \ra \P^{1}$. Suppose that $D_{h}$ is non-normal. 
\begin{enumerate}
\item[\textup{(1)}] Let $g_{1} \colon Q \dra P$ be the elementary link with center along the singular locus of $D_{h}$, which is a $q$-section. 
Let $D_{f, 1}$ be the strict transform of $D_{f}$ in $P$ and $D_{h,1}$ the exceptional divisor of the elementary link.
Then $P$ has a $\P^{2}$-bundle structure $p$ over $\P^{1}$ and
 $(P, D_{h,1}, D_{f,1})$ is a compactification of $\A^{3}$ compatible with $p$.
\item[\textup{(2)}] We follow the notation of Example \ref{ex:0-A}.
Regard $p(D_{f,1})$ and $p'(D_{f,2})$ as $\infty \in \P^{1}$.
Then there is the composition $g_{2} \colon P \dra P'$ of elementary links with center along linear subspaces in the fibers at $\infty$ such that $D_{h,2}$ is the strict transform of $D_{h,1}$ in $P'$.
\end{enumerate}
Summarizing, we have the following diagram of rational maps preserving $Q \setminus (D_{h} \cup D_{f}) \cong \A^{3}$:
\begin{equation}
\xymatrix@C=15pt@R=15pt{
(Q, D_{h}, D_{f}) \ar@{.>}[r]^-{g_{1}} \ar[d]_{q}
&(P, D_{h, 1}, D_{f, 1}) \ar@{.>}[r]^-{g_{2}} \ar[d]_{p}
&(P', D_{h, 2}, D_{f, 2}) \ar[r]^-{g_{3}} \ar[d]_{p'}
&(\P^{3}, H) 
\\ \P^{1} \ar@{=}[r]
&\P^{1} \ar@{=}[r]
&\P^{1},
& 
}
\end{equation}
\nolinebreak where $H \coloneqq g_{3*}D_{f,2}$.
\end{thm}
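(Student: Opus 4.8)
The plan is to treat the two assertions separately: (1) by a direct study of the elementary link, building on the fibrewise geometry forced by non-normality, and (2) as an analogue of Mori's normalisation of Hirzebruch surfaces, carried out now for $\P^{2}$-bundles over $\P^{1}$.

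For (1), I would first record the fibrewise picture. Recall from the analysis of the non-normal case that $D_{h}$ is a relative hyperplane, so for a general $q$-fiber $F\cong\Q^{2}$ the slice $D_{h}\cap F$ is a hyperplane section of $\Q^{2}$. Since $D_{h}$ is non-normal along a curve and is a prime divisor, this section cannot be the smooth conic; it must be the reducible one $\ell_{1}\cup\ell_{2}$, a pair of rulings from the two families meeting at a single point, and the locus of these nodes is precisely $r=\Sing(D_{h})$, a $q$-section. The complement $F\setminus(\ell_{1}\cup\ell_{2})\cong\A^{2}$ is consistent with $U\cong\A^{3}$ from Theorem \ref{thm:main0}. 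Now let $\vp\colon\wt Q\ra Q$ be the blow-up of $r$; on $F$ it blows up the node, turning $F$ into a del Pezzo surface $S$ of degree $7$, i.e.\ the blow-up of $\P^{2}$ at two points. The three $(-1)$-curves of $S$ are the $\vp$-exceptional curve $e$ and the strict transforms $\wt\ell_{1},\wt\ell_{2}$; in the model $S=\Bl_{\{p_{1},p_{2}\}}\P^{2}$ the $\wt\ell_{i}$ are the two exceptional curves and $e$ is the strict transform of the line $\ol{p_{1}p_{2}}$. Since $-K_{S}$ is ample and equals $-K_{\wt Q}|_{F}$, the divisor $-K_{\wt Q}$ is $(q\circ\vp)$-ample, so the link of Definition \ref{defi:elem} is defined.

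Next I would identify the second ray of $\ol{\NE}(\wt Q/\P^{1})$. Blowing up raises the relative Picard number to $2$, so there are exactly two extremal rays, one of which is $[e]$ (contracted by $\vp$). The key computation is that $[\wt\ell_{1}]=[\wt\ell_{2}]$ in $N_{1}(\wt Q/\P^{1})$: the fibre $S$ has Picard rank $3$ while $\rho(\wt Q/\P^{1})=2$, and the resulting relation, together with the symmetry exchanging the two rulings (which is exactly what makes the irreducible $D_{h}$ non-normal), forces the two classes to coincide. Hence the contraction $\psi\colon\wt Q\ra P$ of the second ray simultaneously collapses $\wt\ell_{1}$ and $\wt\ell_{2}$ in every fibre; fibrewise this is $S=\Bl_{\{p_{1},p_{2}\}}\P^{2}\ra\P^{2}$, so $p\colon P\ra\P^{1}$ is a $\P^{2}$-bundle and $\psi$ is divisorial, collapsing the strict transform $\wt D_{h}$ (fibre-slice $\wt\ell_{1}\cup\wt\ell_{2}$) onto a curve $B$. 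As $r\subset D_{h}$ and $\wt D_{h}$ both lie over $D_{h}$, the link is an isomorphism on $U$; the $\vp$-exceptional divisor $E$ is not contracted, and $D_{h,1}=\psi_{*}E$ meets each fibre in the line $\ol{p_{1}p_{2}}$. Since that line passes through $p_{1},p_{2}$ we get $B\subset D_{h,1}$, whence $P\setminus(D_{h,1}\cup D_{f,1})\cong U\cong\A^{3}$ and $(P,D_{h,1},D_{f,1})$ is compatible with $p$, proving (1).

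For (2), both $(P,D_{h,1})$ and $(P',D_{h,2})$ are $\P^{2}$-bundles over $\P^{1}$ in which the horizontal boundary cuts a line in each fibre, so $P=\P(\mc{E})$ for a split rank-$3$ bundle $\mc{E}=\bigoplus\mc{O}(a_{i})$ and $D_{h,1}$ is a relative line. I would first observe that over $\A^{1}=\P^{1}\setminus\infty$ all nontriviality disappears: $p|_{U}\colon U\cong\A^{3}\ra\A^{1}$ forces $P|_{\A^{1}}\cong\A^{1}\times\P^{2}$ with $D_{h,1}|_{\A^{1}}$ a constant line at infinity, exactly as for $P'$ over $\A^{1}$. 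Thus the discrepancy between $(P,D_{h,1})$ and the standard model $P'=\Bl_{\ell}\P^{3}=\P(\mc{O}\oplus\mc{O}\oplus\mc{O}(1))$ of Example \ref{ex:0-A} is concentrated over $\infty$, and I would remove it by finitely many elementary links centred at a point or a line of the fibre over $\infty$ — the $\P^{2}$-bundle analogue of Mori's elementary transformations — each altering the splitting type $(a_{i})$ and the position of $D_{h,1}$ in a controlled way while preserving $\A^{3}$. Terminating at the normalised pair $(P',D_{h,2})$ and composing with the blow-down $g_{3}$ of Example \ref{ex:0-A} yields the asserted diagram.

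The main obstacle is the second ray in (1): proving rigorously that $[\wt\ell_{1}]=[\wt\ell_{2}]$ in $N_{1}(\wt Q/\P^{1})$ and that the resulting contraction is a smooth $\P^{2}$-bundle over \emph{all} of $\P^{1}$, including the points where the $q$-fibre degenerates (quadric cones or pairs of planes) and where $r$ may meet the singular locus of a fibre; the fibrewise del Pezzo picture must be shown to persist there. In (2) the analogous difficulty is bounding the number of elementary links and verifying that the terminal model is exactly $(P',D_{h,2})$ rather than some other split $\P^{2}$-bundle, i.e.\ that normalising $\mc{E}$ and the relative line simultaneously lands on Example \ref{ex:0-A}.
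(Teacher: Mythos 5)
Your overall route coincides with the paper's: part (1) is exactly the D'Souza-type elementary link (blow up the singular section, contract the strict transform of $D_{h}$ onto a bisection $B$ of the resulting $\P^{2}$-bundle, with the $\vp$-exceptional divisor becoming the new relative hyperplane $D_{h,1}$), and part (2) is a normalization of the pair ($\P^{2}$-bundle, sub-$\P^{1}$-bundle) by Maruyama elementary transformations supported in the fiber at $\infty$. However, the two points you yourself flag as ``obstacles'' are precisely where the mathematical content lies, and neither is filled in.

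For (1): the existence of the second contraction and the fact that $p\colon P\to\P^{1}$ is a $\P^{2}$-bundle over \emph{all} of $\P^{1}$ is not a fibrewise formality. Over a singular $q$-fiber $F\cong\Q^{2}_{0}$ the blow-up of the point $s\cap F$ does not produce a degree-$7$ del Pezzo surface, and the induced map $F_{\wt Q}\to p^{-1}(t)$ is a weighted blow-up, so the ``collapse two $(-1)$-curves'' picture genuinely breaks there. The paper does not reprove this; it invokes Lemma \ref{lem:q-p} (quoted from D'Souza and Fukuoka). You would have to either cite that result or supply the argument at the singular fibers. Likewise, the assertions that $\Sing D_{h}$ is horizontal (rather than having a component inside a fiber) and that $D_{h}$ restricts to every smooth fiber as the reducible conic with node on $r$ --- equivalently that the strict transform of $D_{h}$ is exactly $E_{\psi}$ --- constitute Lemma \ref{lem:nnorm}, proved by an intersection computation on the blow-up of a ruling; your ``prime and non-normal, hence reducible section'' reasoning silently assumes the non-normal locus dominates $\P^{1}$.

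For (2): ``terminating at the normalised pair'' is the whole point and is not automatic. The paper's Proposition \ref{prop:p} first reduces $D_{h,1}$ to $\F_{0}$ by links centred at points of $D_{h,1}\cap D_{f,1}$, then tracks the invariants $d=\deg\mc{E}$ and $e$ with $D_{h,1}\sim\xi_{P}+eD_{f,1}$, shows $d+e$ is even and how $(d,e)$ shifts under the two kinds of links (centre a point off $D_{h,1}$, or the line $D_{f,1}\cap D_{h,1}$), and finally needs Lemma \ref{lem:001} --- an ampleness argument for $-K_{P}$ together with the Mori--Mukai classification --- to identify the endpoint as $\F(0,0,1)=\Bl_{\mathrm{line}}\P^{3}$ with $D_{h,2}$ its exceptional divisor rather than some other split bundle. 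Your proposal contains no invariant controlling the process and no identification of the terminal model, so as written it does not establish (2).
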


\begin{thm}\label{thm:maindiag-B}
Let $(Q, D_{h}, D_{f})$ be a compactification of $\A^{3}$ compatible with a quadric fibration $q \colon Q \ra \P^{1}$. 
Suppose that $D_{h}$ is normal. 
We follow the notation of Example \ref{ex:0-B}.
Regard $q(D_{f})$ and $q'(D_{f}')$ as $\infty \in \P^{1}$.
Then there is the composition $h_{1} \colon Q \dra Q'$ of elementary links with center along rulings in the fibers at $\infty$ such that $D'_{h}$ is the strict transform of $D_{h}$ in $Q'$.
In particular, we have the following diagram of rational maps preserving $Q \setminus (D_{h} \cup D_{f}) \cong \A^{3}$:
\begin{equation}
\xymatrix@C=15pt@R=15pt{
(Q, D_{h}, D_{f}) \ar@{.>}[r]^-{h_{1}} \ar[d]_{q}
&(Q', D'_{h}, D'_{f}) \ar[r]^-{h_{2}} \ar[d]_{q'}
&(\Q^{3}, \Q^{2}_{0}) \ar@{.>}[r]^-{h_{3}}
&(\P^{3}, H)
\\ \P^{1} \ar@{=}[r]
&\P^{1},
& 
&
}
\end{equation}
where we regard $h_{2*}D_{f}'$ as $\Q^{2}_{0}$ and $H \coloneqq \P^{3} \setminus h_{3}(\Q^{3} \setminus \Q^{2}_{0})$.
\end{thm}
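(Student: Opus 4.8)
The plan is to build $h_{1}$ as a finite chain of elementary links of the shape described in Definition \ref{defi:elem}, each centred at a ruling contained in the current special fibre over $\infty$. Since such a centre lies inside $D_{f}$, which is deleted when we pass to $U$, every one of these links restricts to an isomorphism over $\A^{1}=\P^{1}\setminus\{\infty\}$ and therefore automatically preserves $U=Q\setminus(D_{h}\cup D_{f})\cong\A^{3}$; the whole problem is thus to steer the special fibre, together with the trace of $D_{h}$ on it, to the configuration of Example \ref{ex:0-B}. Before starting I would invoke the structural results of the previous sections to record, in the normal case, the precise geometry near $\infty$: the type of the fibre $D_{f}$ (I expect it to be the quadric cone $\Q^{2}_{0}$, since a ruling link in a smooth fibre is readily checked to produce again a smooth fibre, so a cone cannot be created from a smooth one by such links), the intersection $C_{h}=D_{h}\cap F$ with a general fibre $F$ (a conic, i.e.\ of bidegree $(1,1)$, because $D_{h}$ is normal), and the curve $D_{h}\cap D_{f}$ on the cone.

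Next I would analyse a single link $Q\dra Y$ with centre a ruling $r\subset D_{f}$. Let $\vp\colon\wt{Q}\to Q$ be the blow-up. The first task is to verify the hypothesis of Definition \ref{defi:elem}, namely that $-K_{\wt{Q}}$ is $(q\circ\vp)$-ample; this I would check by evaluating $-K_{\wt{Q}}=-\vp^{*}K_{Q}-E$ on the fibral curves of $\vp$ and on the ruling classes of the strict transform $\wt{D}_{f}$, using that $N_{D_{f}/Q}|_{r}\cong\mc{O}_{r}$ (as $D_{f}$ is a fibre) together with the normal bundle of a ruling. Granting this, Definition \ref{defi:elem} produces the second contraction $\psi\colon\wt{Q}\to Y$, and the heart of the computation is to identify it: I expect $\psi$ to contract the strict transform $\wt{D}_{f}$ of the old special fibre onto a ruling $r^{\vee}$ of the new special fibre, so that $Y\to\P^{1}$ is again a quadric fibration agreeing with $q$ over $\A^{1}$, and $\psi$ realises a fibrewise elementary transformation of the quadric bundle. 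I would then read off its effect on the boundary via $\wt{D}_{h}=\vp^{*}D_{h}-(\operatorname{mult}_{r}D_{h})E$ and $D_{h}^{Y}=\psi_{*}\wt{D}_{h}$, so that the trace $D_{h}^{Y}\cap\tau^{-1}(\infty)$ is obtained from $D_{h}\cap D_{f}$ by a single blow-up and blow-down of a point of $r$. The main point of this step is that the change in $D_{h}\cap D_{f}$ is completely controlled and local to $r$.

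Using this, I would organise the links into an algorithm. Comparing the curve $D_{h}\cap D_{f}$ with the target curve $D_{h}'\cap D_{f}'$ furnished by Example \ref{ex:0-B}, I would attach to the pair $(D_{f},\,D_{h}\cap D_{f})$ a nonnegative integer (for instance the intersection multiplicity of $D_{h}$ with the vertex ruling, or the defect of $D_{h}\cap D_{f}$ from the model conic) and choose at each stage the ruling $r$ through the offending point so that this integer strictly drops. Showing that this quantity is a genuine monovariant, that each chosen $r$ really is a valid link centre, and that the process halts exactly at the configuration $(\Q^{2}_{0},\,D_{h}'\cap D_{f}')$ of Example \ref{ex:0-B} — rather than overshooting or cycling among quadric cones — is where I expect the real work and the main obstacle to lie. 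Once the special fibre and the trace of $D_{h}$ coincide with those of $Q'$, and since everything already agrees over $\A^{1}$, a rigidity statement (uniqueness of the quadric-fibration compactification determined by this boundary data) would identify the endpoint triple with $(Q',D_{h}',D_{f}')$.

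Finally I would assemble the diagram. The composite of the above links is the desired $h_{1}\colon Q\dra Q'$ with $D_{h}'$ the strict transform of $D_{h}$; combining it with the morphisms $h_{2}$ and $h_{3}$ recalled in Example \ref{ex:0-B}, each of which preserves $\A^{3}$, yields the asserted diagram of rational maps preserving $Q\setminus(D_{h}\cup D_{f})\cong\A^{3}$, with $h_{2*}D_{f}'=\Q^{2}_{0}$ and $H=\P^{3}\setminus h_{3}(\Q^{3}\setminus\Q^{2}_{0})$. The only genuinely delicate ingredients are the identification of the second contraction of each link and, above all, the construction of a monovariant guaranteeing termination at the precise model of Example \ref{ex:0-B}.
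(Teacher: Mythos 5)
Your overall shape — a chain of ruling-centred links over $\infty$, each automatically preserving $U$, steered by a decreasing invariant until the boundary data matches Example \ref{ex:0-B} — is the same as the paper's, and your preliminary observations ($D_f\cong\Q^2_0$, $D_h|_F$ a $(1,1)$-curve on a general fibre, Lemma \ref{lem:q-q} supplying the second contraction) are all correct. But there is a genuine gap in the invariant you propose. Any monovariant built from the trace $D_h\cap D_f$ alone (``defect from the model conic'', multiplicity at the vertex, etc.) becomes zero as soon as $D_h\cap D_f$ is a smooth conic, and at that point $D_h$ is a Hirzebruch surface $\F_d$ with $d$ still arbitrary (Theorem \ref{thm:main2}(1); Example \ref{ex:4} realises every $d\ge 0$). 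Your algorithm would therefore halt at a compactification that need not be the one of Example \ref{ex:0-B}. Worse, lowering $d$ requires links that temporarily \emph{worsen} the trace: the paper's Lemma \ref{lem:type0} passes $\F_d\to S_d\to\F_{d-1}$, making $D_f|_{D_h}$ reducible in between, so no quantity computed from the fibre trace can strictly drop at every step. The paper's actual invariant is two-tiered: first the ``type'' $m=(B\cdot (D_h)_P)_P$ measured on the $\P^2$-bundle model of Lemma \ref{lem:q-p} (equivalently, the singularity type of $D_h$ from Theorem \ref{thm:main2}), reduced to $0$ by Theorem \ref{thm:main4} — whose proof needs the commutation Lemma \ref{lem:pqqpp} (an Atiyah flop comparing two orders of links), not just a local computation on the fibre — and only then the degree $d$ of $D_h\cong\F_d$.

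The second gap is your final step: you invoke ``a rigidity statement (uniqueness of the quadric-fibration compactification determined by this boundary data)'' to identify the endpoint with $(Q',D'_h,D'_f)$. No such statement is available, and it is not needed: the paper instead shows (Lemma \ref{lem:qstandard}) that once $D_h\cong\F_0$ one has $-K_Q\sim 2D_h+3D_f$ with $(-K_Q)^3=40$ and $-K_Q$ ample, so by the Mori--Mukai classification $Q$ is the blow-up of $\Q^3$ along a smooth conic with $D_h$ the exceptional divisor — i.e.\ the endpoint \emph{is} Example \ref{ex:0-B} by direct identification, not by an abstract uniqueness principle. Without replacing your rigidity appeal by such an argument (or a proof of the uniqueness claim), the proof is incomplete even granting a correct monovariant.
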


In Example \ref{ex:5}, we construct a compactification of an affine homology $3$-cell into a quadric fibration, which gives a negative answer to Problem \ref{prob:main} (1) in the case where $n=3$ without the assumption on the compatibility.
In the forthcoming paper \cite{Nag2}, we deal with Problem \ref{prob:main} for compactifications compatible with del Pezzo fibrations when the degree of a general fiber is less than $8$.
Problem \ref{prob:main} (2) for general compactifications into del Pezzo fibrations is at present far from being solved.

\vspace{11pt}

This article is structured as follows.

In \S \ref{sec:elem}, we recall some facts on elementary links.


In \S \ref{sec:top}, we determine the Hodge diamonds of del Pezzo fibrations containing affine homology $3$-cells.
We also show that the base curve must be $\P^{1}$.

In \S \ref{sec:equiv}, we give precise statement of Theorem \ref{thm:main0} as in Theorem \ref{thm:main1} and prove it by using elementary links from quadric fibrations to $\P^{2}$-bundles.

In \S \ref{sec:ex}, we construct several examples of compactifications of $\A^{3}$ compatible with quadric fibrations as applications of Theorem \ref{thm:main1}.
We note that these examples are erroneously omitted from \cite[Table 1]{Kis} or \cite[Table 1]{Mul}.
We also construct an example of compactifications of affine homology $3$-cells.
This gives a negative answer to Problem \ref{prob:main} (1) in the case where $n=3$ and the compactification is not compatible with the extremal contraction.

In \S \ref{sec:mainA}, we give a solution to Problem \ref{prob:main} for compactifications compatible with $\P^{2}$-bundles.
Theorem \ref{thm:maindiag-A} follows as a corollary.

In the rest of paper, we prove Theorem \ref{thm:maindiag-B} as follows.
Let $(Q, D_{h}, D_{f})$ be a compactification of $\A^{3}$ compatible with a quadric fibration such that $D_{h}$ is normal.

First, in \S \ref{sec:sing}, we determine the singularities of $D_{h}$ and $D_{f}|_{D_{h}}$.
We also assign a non-negative integer to them, which we call the type of $(Q, D_{h}, D_{f})$.
By definition, $(Q, D_{h}, D_{f})$ is of type $0$ if and only if $D_{h}$ is a Hirzebruch surface.

Next, in \S \ref{sec:decrease}, we suppose that $(Q, D_{h}, D_{f})$ is of type $m>0$. We construct a birational map preserving $\A^{3}$ from $(Q, D_{h}, D_{f})$ to another compactification of type $(m-1)$ via elementary links between quadric fibrations.
Composing such maps, we get a birational map from $(Q, D_{h}, D_{f})$ to a compactification of $\A^{3}$ of type $0$. 
Hence we reduce to proving Theorem \ref{thm:maindiag-B} when $(Q, D_{h}, D_{f})$ is of type $0$ i.e.\,when $D_{h}$ is a Hirzebruch surface. 

Finally, in \S \ref{sec:desiredseq2}, we suppose that $D_{h}$ is a Hirzebruch surface of degree $d \in \Z_{\geq 0}$.
When $d>0$, we give a birational map preserving $\A^{3}$ from $(Q, D_{h}, D_{f})$ to another compactification $(Q', D_{h}', D_{f}')$ of $\A^{3}$ of type $0$ such that $D_{h}'$ is a Hirzebruch surface of degree $(d-1)$.
When $d=0$, we show that $(Q, D_{h}, D_{f})$ is actually the same as $(Q', D_{h}', D_{f}')$ as in Example \ref{ex:0-B}.
We have thus proved Theorem \ref{thm:maindiag-B}.

\begin{nota*} Throughout this paper, 
we use the following notation:
\begin{itemize}
\item $\Q^{3}$: the smooth quadric hypersurface in $\P^{4}$. $\mc{O}_{\Q^{3}}(1) \coloneqq \mc{O}_{\P^{4}}(1)|_{\Q^{3}}$.
\item $\Q^{2}_{0}$: the quadric cone in $\P^{3}$. $\mc{O}_{\Q^{2}_{0}}(1) \coloneqq \mc{O}_{\P^{3}}(1)|_{\Q^{2}_{0}}$.
\item $\F_{d}$: the Hirzebruch surface of degree $d$.
\item $f_{d}$: a fiber of $\F_{d}$.
\item $\Sigma_{d}$: the minimal section of $\F_{d}$.
\item $\P_{X}(\mc{E}) \coloneqq \PProj_{\mc{O}_{X}} \oplus_{m \geq 0} \Sym^{m}(\mc{E})$: the projectivization of a locally free sheaf on a variety $X$. We often write it $\P(\mc{E})$ for short.
\item $\mc{O}_{\P(\mc{E})}(1)$: the tautological bundle of a projective bundle $\P(\mc{E})$.
\item $\xi_{\P(\mc{E})}$: the tautological divisor of a projective bundle $\P(\mc{E})$.
\item $\F(a,b,c) \coloneqq \P_{\P^{1}}(\mc{O}_{\P^{1}}(a) \oplus \mc{O}_{\P^{1}}(b) \oplus \mc{O}_{\P^{1}}(c))$.
\item $E_f$: the exceptional divisor of a birational morphism $f$.
\item $\Sing X$: the singular locus of a variety $X$.
\item $Y_{\wt X}$: the strict transformation of a closed subscheme $Y$ of a normal variety $X$ in a birational model $\wt X$ of $X$.
\item $\eu (X)$: the topological Euler number of a topological space $X$.
\item $h^{i,j}(X)$: the dimension of $H^{i}(X, \wedge^{j}\Omega_{X})$ of a smooth projective $3$-fold $X$.
\item $p_{a}(C)$: the arithmetic genus of a smooth projective curve $C$.
\item $\Supp Y$: the support of a closed subscheme $Y$ of an ambient variety.
\item $N_{Y}X$: the normal bundle of a smooth subvariety $Y$ of a smooth variety $X$.
\end{itemize}
\end{nota*}

\begin{conv}
A del Pezzo fibration is an extremal contraction of relative Picard number one from a smooth projective $3$-fold to a smooth projective curve.
The degree of a del Pezzo fibration is the anti-canonical volume of a general fiber, which is a del Pezzo surface.
A $\P^{2}$-bundle (resp.\,a quadric fibration) is a del Pezzo fibration of degree $9$ (resp.\,$8$).



For a surjective morphism $f \colon X \ra Y$ and divisors $D_{1}, D_{2}$ on $X$, the notation $D_{1} \sim_{Y} D_{2}$ means that $D_{1}-D_{2}$ is linearly equivalent to the pullback of some divisor on $Y$.
\end{conv}

\section{Elementary links}\label{sec:elem}

In this section, we have compiled some facts on elementary links, which will be needed in \S \ref{sec:equiv}--\ref{sec:mainB}.
In the following situation, the assumption of Definition \ref{defi:elem} is satisfied, and hence we can construct an elementary link. For the detail, see \S \ref{sec:elembl}--\ref{sec:elemqq}.
\begin{itemize}
\item $\sigma$ is the blow-up at a point and $\tau$ is the blow-up along a curve.
\item $\sigma$ is a $\P^{2}$-bundle and $r$ is a linear subspace of a fiber \cite{Mar}. 
\item $\sigma$ is a quadric fibration and $r$ is a section \cite{D'S}. 
\item $\sigma$ is a quadric fibration and $r$ is a ruling in a $\sigma$-fiber \cite{H-T}.
\end{itemize}

\subsection{Elementary links between blow-ups}\label{sec:elembl}

First we check that the change of the order of the blow-ups at a point and along a curve does not change the output.

\begin{lem}\label{lem:blnorm}
Let $X$ be a smooth 3-fold, $C \subset X$ a smooth irreducible curve and $p \in C$ a point.
Denote by $\vp_{1} \colon X_1 \ra X$ the blow-up at $p$ and by $\vp_{2} \colon X_2 \ra X$ the blow-up along $C$.
Let $C_1$ be the strict transform of $C$ in $X_1$ and $f_{p} \coloneqq \vp_{2}^{-1}(p)$. 
Then the following holds:
\begin{enumerate}
\item[\textup{(1)}] $\Bl_{C_{1}}(\Bl_{p}X) \cong \Bl_{f_{p}}(\Bl_{C}X)$ over $X$.
\item[\textup{(2)}] $N_{C}X \cong N_{C_1} X_{1} \otimes \mc{O}_{C_{1}}(p_{1})$, where $p_{1} \coloneqq E_{\vp_{1}}|_{C_{1}}$.
\end{enumerate}
\end{lem}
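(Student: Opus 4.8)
The plan is to prove the two statements by rather different means: (1) by a universal-property argument that reduces the commutation of the blow-ups to two local ideal-sheaf identities, and (2) by a direct local computation of conormal bundles.

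For (1), I would characterize \emph{both} iterated blow-ups by the same universal property over $X$: a morphism $g\colon T\ra X$ factors uniquely through it precisely when both $g^{*}\mc{I}_{p}$ and $g^{*}\mc{I}_{C}$ are invertible ideal sheaves. Granting this, the two constructions represent the same functor and are canonically isomorphic over $X$. To verify the property for $\Bl_{C_{1}}(\Bl_{p}X)$: if $g^{*}\mc{I}_{p}$ is invertible, then $g$ factors through $\vp_{1}\colon X_{1}\ra X$ by the universal property of the blow-up of a point, giving $g_{1}\colon T\ra X_{1}$; the local identity $\mc{I}_{C}\cdot\mc{O}_{X_{1}}=\mc{I}_{E_{\vp_{1}}}\cdot\mc{I}_{C_{1}}$ (valid because $C$ is smooth through $p$, so its total transform is the strict transform plus $E_{\vp_{1}}$ with multiplicity one) yields $g^{*}\mc{I}_{C}=g_{1}^{*}\mc{I}_{E_{\vp_{1}}}\cdot g_{1}^{*}\mc{I}_{C_{1}}$. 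Since an invertible factor can be cancelled from an invertible product (in the group of invertible fractional ideal sheaves), the invertibility of $g^{*}\mc{I}_{C}$ together with $g^{*}\mc{I}_{p}=g_{1}^{*}\mc{I}_{E_{\vp_{1}}}$ forces $g_{1}^{*}\mc{I}_{C_{1}}$ to be invertible, so $g_{1}$ factors through $\Bl_{C_{1}}X_{1}$. For $\Bl_{f_{p}}(\Bl_{C}X)$ the identical characterization holds: $g^{*}\mc{I}_{C}$ invertible makes $g$ factor through $\vp_{2}\colon X_{2}\ra X$, and the identity $\mc{I}_{p}\cdot\mc{O}_{X_{2}}=\mc{I}_{f_{p}}$ (that the scheme-theoretic fibre $\vp_{2}^{-1}(p)$ is the reduced smooth curve $f_{p}$) shows $g^{*}\mc{I}_{p}$ is invertible iff $g_{2}^{*}\mc{I}_{f_{p}}$ is. Both ideal identities are checked in the standard \'etale-local model with coordinates $x,y,z$, $C=\{x=y=0\}$ and $p$ the origin, using that blow-ups commute with flat base change.

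For (2), I would first observe that $\vp_{1}$ restricts to an isomorphism $C_{1}\xra{\sim}C$ (blowing up a smooth point of a curve changes nothing), so all bundles live on one curve and $p_{1}$ is the single point of $C_{1}$ over $p$. In the local model the conormal sheaf $\mc{I}_{C}/\mc{I}_{C}^{2}$ is framed by $\bar{x},\bar{y}$ and $\mc{I}_{C_{1}}/\mc{I}_{C_{1}}^{2}$ by $\bar{x}',\bar{y}'$; the substitution $x=zx'$, $y=zy'$, with $z$ the local coordinate on $C_{1}$ vanishing at $p_{1}$, shows that the natural map $\vp_{1}^{*}(N_{C}^{\vee}X)\ra N_{C_{1}}^{\vee}X_{1}$ is, in these frames, multiplication by the local equation of $p_{1}$ in each coordinate. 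Hence $N_{C_{1}}^{\vee}X_{1}\cong N_{C}^{\vee}X\otimes\mc{O}_{C_{1}}(p_{1})$, and dualizing gives the claimed $N_{C}X\cong N_{C_{1}}X_{1}\otimes\mc{O}_{C_{1}}(p_{1})$. As a global consistency check, adjunction combined with $K_{X_{1}}=\vp_{1}^{*}K_{X}+2E_{\vp_{1}}$ confirms that the determinants of the two sides agree.

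The step I expect to be most delicate is making the universal-property argument in (1) fully rigorous: in particular the cancellation of the invertible factor $\mc{I}_{E_{\vp_{1}}}$, and the verification that $\mc{I}_{p}\cdot\mc{O}_{X_{2}}$ equals $\mc{I}_{f_{p}}$ exactly as ideal sheaves rather than merely up to radical, which is precisely what guarantees that $f_{p}$ is the correct reduced, smooth centre. Once these two local ideal identities are secured, both the isomorphism in (1) and the conormal computation in (2) are routine.
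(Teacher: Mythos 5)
Your proof is correct, but it takes a genuinely different route from the paper. The paper argues via the minimal model program: it forms $\wt X=\Bl_{C_{1}}X_{1}$, shows that $-E_{p}-E_{\psi_{1}}$ is nef over $X$, identifies the extremal ray $R=(-E_{p}-E_{\psi_{1}})^{\perp}\cap\ol{\NE}(\wt X/X)$ as generated by the fibres of $E_{p}\cong\F_{1}$, contracts it, and then invokes Mori's classification of divisorial contractions from smooth $3$-folds to recognize both resulting morphisms as blow-ups along smooth curves, forcing them to be $\vp_{2}$ and $\Bl_{f_{p}}$; part (2) is then deduced from part (1) by comparing tautological bundles on $E_{\psi_{1}}=\P(N_{C_{1}}X_{1}^{\vee})$ and pushing forward. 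You instead characterize both iterated blow-ups by the same universal property (invertibility of $g^{*}\mc{I}_{p}$ and $g^{*}\mc{I}_{C}$), reducing everything to the two local ideal identities $\mc{I}_{C}\cdot\mc{O}_{X_{1}}=\mc{I}_{E_{\vp_{1}}}\cdot\mc{I}_{C_{1}}$ and $\mc{I}_{p}\cdot\mc{O}_{X_{2}}=\mc{I}_{f_{p}}$, which do hold in the standard local model precisely because $C$ is smooth through $p$; the cancellation of the invertible factor goes through since an invertible ideal sheaf is locally generated by a nonzerodivisor. Your direct conormal computation for (2) is likewise correct and agrees with the paper (the determinant check via $K_{X_{1}}=\vp_{1}^{*}K_{X}+2E_{\vp_{1}}$ is consistent). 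What each approach buys: yours is more elementary and works in any dimension without any cone or contraction theorems; the paper's is tailored to the $3$-fold setting and has the side benefit of explicitly identifying the geometry of the exceptional divisors (e.g.\ $E_{p}\cong\F_{1}$ and the ruling being contracted), which the paper reuses later, for instance in the proof of Lemma \ref{lem:q-q}. I see no gap in your argument.
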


\begin{proof}
(1): Let $\psi_{1} \colon \wt X \ra X_{1}$ be the blow-up along $C_{1}$ and $\chi \coloneqq \vp_{1} \circ \psi_{1}$.
Let $E_{p}$ be the strict transform of $E_{\vp_{1}}$ in $\wt X$. 
Then we have $-K_{\wt X} \sim_{X} - 2E_{p} -E_{\psi_{1}}$. 

Consider the divisor $-E_{p} -E_{\psi_{1}}$.
Each irreducible curve $l \subset \wt X$ contracted by $\chi$ is either a fiber of $\psi_{1}|_{E_{\psi_{1}}} \colon E_{\psi_{1}} \ra C_{1}$ or a curve in $E_{p}$. The former satisfies $(l \cdot -E_{p}-E_{\psi_{1}})=1$ and the latter satisfies $(l \cdot -E_{p}-E_{\psi_{1}})=(l \cdot f_{1})_{E_{p}} \geq 0$ regarding $E_{p}$ as $\F_{1}$.
Hence $-E_{p} -E_{\psi_{1}}$ is a $\chi$-nef divisor and $R \coloneqq (-E_{p}-E_{\psi_{1}})^{\perp} \cap \ol\NE(\wt X/X)$ is generated by $f_{1}$ in $E_{p} \cong \F_{1}$. 

Since $(-K_{\wt X} \cdot f_{1})=(-E_{p} \cdot f_{1})>0$, there is the contraction morphism $\psi_{2} \colon \wt X \ra X_{2}'$ of the extremal ray $R$.
Let $\vp_{2}' \colon X_{2}' \to X$ be the induced morphism.
Since the centers of both $\psi_{2}$ and $\vp_{2}'$ is a curve, each of them is the blow-up along a smooth curve by \cite[Theorem 3.3]{Mor}. 
Hence we have $\vp_{2}'= \vp_{2}$ and $\psi_{2}$ is the blow up of $X_{2}$ along $f_{p}$, which proves (1).

\noindent(2): It holds that $E_{p}=E_{\psi_{2}}$ and $\psi_{2*}E_{\psi_{1}}=E_{\vp_{2}}$ by (1). Hence we have: 
\begin{eqnarray}\label{eq:ptcurve}
\ \ \ (\psi_{2}|_{E_{\psi_{1}}})^{*}\mc{O}_{\P(N_{C}X^{\vee})}(1)&\!\!\cong\!\!&\mc{O}_{E_{\psi_{1}}}(-\psi_{2}^{*}E_{\vp_{2}})\\
&\!\!\cong\!\!&\mc{O}_{E_{\psi_{1}}}(-E_{\psi_{1}}) \otimes \mc{O}_{E_{\psi_{1}}}(-E_{\psi_{2}})\nonumber\\
&\!\!\cong\!\!&\mc{O}_{\P(N_{C_{1}}X_{1}^{\vee})}(1)\otimes \mc{O}_{E_{\psi_{1}}}(-\psi_{1}^{*}E_{\vp_{1}}).\nonumber\\
&\!\!\cong\!\!&\mc{O}_{\P(N_{C_{1}}X_{1}^{\vee})}(1)\otimes (\psi_{1|_{E_{\psi_{1}}}})^{*}\mc{O}_{C_{1}}(-E_{\vp_{1}}).\nonumber
\end{eqnarray}
Pushing forward (\ref{eq:ptcurve}) by $\chi|_{E_{\psi_{1}}}$, we get $N_{C}X \cong N_{C_{1}}X_{1} \otimes \mc{O}_{C_{1}}(p_{1})$.
\end{proof}

\subsection{Elementary links between $\P^{2}$-bundles}

Elementary links between projective bundles are considered by M.\,Maruyama \cite{Mar} in any dimension.
Here we restrict our attention to $\P^{2}$-bundles.

\begin{lem}[{\cite[Chapter I, \S 2]{Mar}}]\label{lem:p-p}
Let $p \colon P \ra C$ be a $\P^{2}$-bundle and $L \subset P$ a $n$-dimensional linear subspace of a $p$-fiber ($n \leq 1$).
Let $\vp \colon \wt P = \Bl_{L}P \ra P$ be the blow-up along $L$.
Then there exists a divisorial contraction $\psi \colon \wt P \ra P'$ over $C$ such that the induced morphism $p' \colon P' \ra C$ is a $\P^{2}$-bundle and $\psi$ is the blow-up along a $(1-n)$-dimensional linear subspace of a $p'$-fiber $L' \subset P'$. Also it holds that $E_{\psi}$ is the strict transform of the $p$-fiber containing $L$. 
\begin{equation}
\xymatrix@!C=15pt@R=15pt{
&\Bl_{L} P=\wt{P}=\Bl_{L'} P' \ar[dl]_-{\vp} \ar[dr]^-{\psi}
&
\\P \ar[d]_-{p}
&
&P' \ar[d]^-{p'}
\\ C \ar@{=}[rr]
& 
&C.
}
\end{equation}
Moreover, if $\mc{E}$ is an associated vector bundle to $p \colon P \to C$, then we can take a vector bundle $\mc{E}'$ associated to $p' \colon P' \to C$ such that $\deg\mc{E}'=\deg\mc{E}-(n+1)$. 
\end{lem}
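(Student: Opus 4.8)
The plan is to reduce the statement to the linear algebra of an elementary modification of the rank-three bundle $\mc{E}$, and then to recognise both $\vp$ and $\psi$ as the two blow-downs of a single variety. Write $P=\P(\mc{E})$ and let $c=p(L)\in C$, so that $F\coloneqq p^{-1}(c)=\P(\mc{E}_{c})\cong\P^{2}$ is the fibre containing $L$. In the Grothendieck convention the $n$-dimensional linear subspace $L\subset F$ corresponds to a quotient $\mc{E}_{c}\twoheadrightarrow Q$ with $\dim Q=n+1$ and $L=\P(Q)$; set $V\coloneqq\ker(\mc{E}_{c}\to Q)$, so that $\dim V=2-n$, and note that the hypothesis $n\leq1$ is exactly what makes $V\neq0$. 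I would then define the elementary modification
\[
0\longrightarrow\mc{E}'\longrightarrow\mc{E}\longrightarrow Q_{c}\longrightarrow0 ,
\]
where $Q_{c}$ is the skyscraper sheaf at $c$ with fibre $Q$, put $P'\coloneqq\P(\mc{E}')$, and let $p'\colon P'\to C$ be the induced $\P^{2}$-bundle. The degree assertion is then immediate: since the quotient sheaf has length $\dim Q=n+1$ and the two bundles have equal rank, additivity of Euler characteristics together with Riemann--Roch on $C$ gives $\deg\mc{E}'=\deg\mc{E}-(n+1)$.

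Next I would locate the centre $L'$ and record the symmetry of the construction. Restricting the defining sequence to the fibre at $c$ and computing in a local trivialisation (if $t$ is a local parameter at $c$ and $e_{1},\dots,e_{n+1}$ are sections of $\mc{E}$ mapping to a basis of $Q$, then $\mc{E}'$ is locally generated by $te_{1},\dots,te_{n+1}$ together with sections lifting $V$) produces the exact sequence
\[
0\longrightarrow Q\longrightarrow\mc{E}'_{c}\longrightarrow V\longrightarrow0 .
\]
Hence the quotient $\mc{E}'_{c}\twoheadrightarrow V$ defines a linear subspace $L'\coloneqq\P(V)\subset F'\coloneqq(p')^{-1}(c)$ of dimension $\dim V-1=1-n$, as required. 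The same sequence shows that applying the elementary modification to $\mc{E}'$ along the quotient $V$ returns $\mc{E}\otimes\mc{O}_{C}(-c)$, hence recovers $\P(\mc{E})=P$; thus the roles of $(P,L,\vp)$ and $(P',L',\psi)$ are interchangeable, and it is enough to construct the blow-down to one side.

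The heart of the argument, and the step I expect to be the main obstacle, is to show that the birational map $P\dra P'$ induced by the identification $\mc{E}|_{C\setminus\{c\}}\cong\mc{E}'|_{C\setminus\{c\}}$ is resolved precisely by $\vp\colon\Bl_{L}P\to P$, the resolved morphism being a divisorial contraction $\psi\colon\Bl_{L}P\to P'$ blowing up $L'$. Concretely I would work in the local chart above: the ratios introduced by blowing up $L$ are exactly those needed to turn $P\dra P'$ into a morphism, and reading off its fibres shows that the strict transform $\wt F$ of $F$ is contracted onto $L'=\P(V)$ while the map is an isomorphism elsewhere. This identifies $\Bl_{L}P\cong\Bl_{L'}P'$, exhibits $\psi$ as the blow-up along $L'$, and gives $E_{\psi}=\wt F$, the strict transform of the fibre containing $L$. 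Finally, to match the framework of Definition \ref{defi:elem} I would verify that $-K_{\Bl_{L}P}$ is $(p\circ\vp)$-ample by a discrepancy computation: the blow-up of the codimension-$(3-n)$ centre $L$ has discrepancy $2-n$ along $E_{\vp}$, and $-K$ meets both generators of $\ol{\NE}(\Bl_{L}P/C)$ positively, so that $\psi$ is indeed the contraction of the second ray and the diagram is the elementary link with centre along $L$.
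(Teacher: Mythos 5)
Your proposal is correct. Note, however, that the paper offers no proof of this lemma at all: it is quoted directly from Maruyama \cite[Chapter I, \S 2]{Mar}, so there is no in-paper argument to compare against. What you have written is essentially a reconstruction of Maruyama's own method: the elementary modification $0 \to \mc{E}' \to \mc{E} \to Q_c \to 0$ is exactly the ``elementary transformation'' of vector bundles underlying his construction, and all the checkpoints are in order --- the convention $\P(\mc{E})=\PProj \oplus_m \Sym^m \mc{E}$ used in this paper does make sub-linear-spaces correspond to quotients, so $L=\P(Q)$ with $\dim Q=n+1$ is right; the restricted sequence $0\to Q\to \mc{E}'_c\to V\to 0$ correctly locates $L'=\P(V)$ of dimension $1-n$; the length count gives $\deg\mc{E}'=\deg\mc{E}-(n+1)$; and the double modification returning $\mc{E}\otimes\mc{O}_C(-c)$ gives the advertised symmetry. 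The one place where you describe a computation rather than perform it is the identification $\Bl_L P\cong \Bl_{L'}P'$ with $E_\psi=F_{\wt P}$; this is routine and can also be seen intrinsically: $F_{\wt P}\cong \F_1$ (for $n=0$) or $\P^2$ (for $n=1$) with normal bundle $(\vp^*F-E_\vp)|_{F_{\wt P}}\cong \mc{O}(-\Sigma_1)$ resp.\ $\mc{O}_{\P^2}(-1)$, so the second extremal ray of $\ol{\NE}(\wt P/C)$ contracts exactly $F_{\wt P}$ onto a $\P^{1-n}$, and your discrepancy check $-K_{\wt P}=\vp^*(-K_P)-(2-n)E_\vp$ confirms relative ampleness of $-K_{\wt P}$ so that this contraction exists. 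In short: the proposal is a correct and reasonably complete proof of a statement the paper takes as a black box.
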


\begin{cor}\label{cor:p-p}
We follow the notation of Lemma \ref{lem:p-p}. 
Suppose that $C \cong \P^{1}$.
Let $F$ be a $p$-fiber and $D$ a sub $\P^{1}$-bundle of $P$. Take $a \in \Z$ such that $D \sim \xi_{P} + aF$. 
Then the following hold:
\begin{equation}
  \begin{cases}
     D_{P'} \sim\xi_{P'} + (a+1)F \text{ and } L' \subset D_{P'}&  \text{if } L \not\subset D,\\
     D_{P'} \sim\xi_{P'} + aF       \text{ and } L' \not\subset D_{P'}&  \text{if } L \subset D.
  \end{cases}
\end{equation}
\end{cor}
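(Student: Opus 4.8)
The plan is to carry out all computations on the common blow-up $\wt P = \Bl_L P = \Bl_{L'}P'$ of Lemma \ref{lem:p-p} and to compare the two natural bases of $\Pic(\wt P)$, namely $\{\vp^*\xi_P,\ \vp^*F,\ E_\vp\}$ and $\{\psi^*\xi_{P'},\ \psi^*F',\ E_\psi\}$, where $F$ and $F'$ denote a $p$- and a $p'$-fibre. The two assertions of the corollary will then follow by pushing the strict transform of $D$ forward along $\psi$.

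First I would record the elementary relations. Since Lemma \ref{lem:p-p} identifies $E_\psi$ with the strict transform of the $p$-fibre through $L$, and, by the symmetry of the diagram, $E_\vp$ with the strict transform of the $p'$-fibre through $L'$, comparing total transforms of these (smooth) fibres gives
\begin{equation*}
\vp^*F = E_\vp + E_\psi = \psi^*F'.
\end{equation*}
Next, writing $D \sim \xi_P + aF$ and putting $m \coloneqq \mathrm{mult}_L D$, the smoothness of the sub-$\P^1$-bundle $D$ forces $m\in\{0,1\}$ with $m=1$ exactly when $L\subset D$: indeed $D$ meets the $\P^2$-fibre $F_0$ through $L$ in a line $D\cap F_0$, and $L\subset D$ iff $L\subset D\cap F_0$. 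Hence the strict transform is $\wt D = \vp^*\xi_P + a\,\vp^*F - m\,E_\vp$.

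The heart of the argument is the comparison of tautological classes, namely
\begin{equation*}
\vp^*\xi_P = \psi^*\xi_{P'} + \psi^*F' - E_\psi,
\end{equation*}
which I expect to be the only step not immediate from the blow-up structure, and thus the main obstacle. To prove it I would write $\vp^*\xi_P = \psi^*\xi_{P'} + t\,\psi^*F' - E_\psi$ for an a priori unknown integer $t$ — the coefficient of $E_\psi$ being forced to $-1$ by testing against a fibre of $\psi|_{E_\psi}$, and the coefficient of $\psi^*\xi_{P'}$ being $1$ by testing against a line in a general fibre — and then pin down $t=1$ by an intersection-theoretic computation. Concretely, I would compute $-K_{\wt P}$ in both bases using the blow-up discrepancies ($2E_\vp$ and $E_\psi$ when $n=0$, and $E_\vp$ and $2E_\psi$ when $n=1$) together with $-K_P = 3\xi_P + (2-\deg\mc{E})F$ and the degree shift $\deg\mc{E}' = \deg\mc{E} - (n+1)$ from Lemma \ref{lem:p-p}; matching the coefficient of $\psi^*F'$ then yields $t=1$ in both cases. (Alternatively one can evaluate $(\psi^*\xi_{P'})^3 = \deg\mc{E}'$ directly via the standard blow-up self-intersection formulas, again using the degree shift.)

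Finally I would push forward. Using $\psi_*E_\psi = 0$, $\psi_*\psi^* = \mathrm{id}$, and the three relations above, pushing $\wt D$ to $P'$ gives
\begin{equation*}
D_{P'} = \psi_*\wt D \sim \xi_{P'} + (a+1-m)F',
\end{equation*}
which is exactly the asserted class ($m=0$ giving $\xi_{P'}+(a+1)F'$ and $m=1$ giving $\xi_{P'}+aF'$). For the containment statement I would re-express $\wt D$ in the $\psi$-basis, obtaining $\wt D = \psi^*D_{P'} - (1-m)E_\psi$; since $\wt D$ is equally the strict transform of $D_{P'}$ under $\psi$, the coefficient $1-m$ equals $\mathrm{mult}_{L'}D_{P'}$. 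Thus $L'\subset D_{P'}$ precisely when $m=0$, i.e.\ when $L\not\subset D$, which completes the proof.
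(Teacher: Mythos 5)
Your proposal is correct and follows essentially the same route as the paper: both hinge on the relation $\vp^{*}\xi_{P} \sim \psi^{*}\xi_{P'}+F-E_{\psi}$, obtained by comparing $-K_{\wt P}$ in the two bases via the canonical bundle formula, the discrepancies $(2-n)E_{\vp}$ and $(n+1)E_{\psi}$, and the degree shift from Lemma \ref{lem:p-p}, followed by pushing the strict transform of $D$ forward along $\psi$. Your ansatz-and-test organization of that comparison, and your explicit reading of $\mathrm{mult}_{L'}D_{P'}$ from the $E_{\psi}$-coefficient to get the containment statement (which the paper leaves implicit), are only cosmetic differences.
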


\begin{proof}
By the canonical bundle formula, we have:
\begin{eqnarray}\label{eq:ppcor1}
-K_{P} &\sim& 3\xi_{P}-(\deg\mc{E}-2)F,\\
-K_{P'} &\sim& 3\xi_{P'}-(\deg\mc{E}-(3+n))F.
\end{eqnarray}
Also it holds that 
\begin{equation}\label{eq:ppcor2}
-K_{\wt P} \sim \vp^{*}(-K_{P})-(2-n)E_{\vp} \sim \psi^{*}(-K_{P'})-(n+1)E_{\psi}.
\end{equation}
Combining (\ref{eq:ppcor1})--(\ref{eq:ppcor2}) and $E_{\vp} \sim F-E_{\psi}$, we have $3\vp^{*}\xi_{P} \sim 3\psi^{*}\xi_{P'}+3(F-E_{\psi}).$ Since $\Pic \wt P$ is torsion-free, it holds that:
\begin{equation}\label{eq:ppcor3}
\vp^{*}\xi_{P} \sim \psi^{*}\xi_{P'}+F-E_{\psi}.
\end{equation}

On the other hand, we have:
\begin{equation}
D_{\wt P} \sim
  \begin{cases}
     \vp^{*}\xi_{P} + aF &  \text{if } L \not\subset D,\\
     \vp^{*}\xi_{P} + aF -E_{\vp} &  \text{if } L \subset D\label{eq:ppcor4}.
  \end{cases}
\end{equation} 
Combining (\ref{eq:ppcor3}), (\ref{eq:ppcor4}) and $E_{\vp} \sim F-E_{\psi}$, we have:
\begin{equation}
D_{\wt P} \sim
  \begin{cases}
     \psi^{*}\xi_{P'} + (a+1)F-E_{\psi} &  \text{if } L \not\subset D,\label{eq:ppcor5}\\
     \psi^{*}\xi_{P'} + aF &  \text{if } L \subset D.
  \end{cases}
\end{equation} 
By pushing forward (\ref{eq:ppcor5}) by $\psi$, we have the assertion.
\end{proof}

\subsection{Elementary links from quadric fibrations to $\P^{2}$-bundles}
H. D'Souza \cite{D'S} showed the existence of elementary links from quadric fibrations to $\P^{2}$-bundles.
The precise statement is as follows.

\begin{lem}[{\cite[(2.7.3)]{D'S}, \cite[Proposition 3.1]{Fuk}}]\label{lem:q-p}
Let $q \colon Q \ra C$ be a quadric fibration and $s \subset Q$ a $q$-section. 
Let $\vp \colon \wt{Q}=\Bl_{s} Q \ra Q$ be the blow-up of $Q$ along $s$. 
Then there exists a divisorial contraction $\psi \colon \wt{Q} \ra P$ over $C$ such that the induced morphism $p \colon P \ra C$ is a $\P^{2}$-bundle and $\psi$ is the blow-up along a smooth $p$-bisection $B \subset P$.
\begin{equation}\label{diag:q-p}
\xymatrix@!C=15pt@R=15pt{
&\Bl_{s} Q=\wt{Q}=\Bl_{B} P \ar[dl]_-{\vp} \ar[dr]^-{\psi}
&
\\Q \ar[d]_-{q}
&
&P \ar[d]^-{p}
\\ C \ar@{=}[rr]
& 
&C.
}
\end{equation}
Moreover, let $H_{Q}$ be a $q$-ample divisor with $2H_{Q} \sim_{C} -K_{Q}$ and $H_{P}$ a $p$-ample divisor such that $3H_{P} \sim_{C} -K_{P}$. Then:
\begin{enumerate}
\item[\textup{(1)}] It holds that $E_\psi \sim_{C} \vp^{*}H_{Q}-2E_{\vp}$ and $E_\vp \sim_{C} \psi^{*}H_{P}-E_\psi$.
\item[\textup{(2)}] The branched locus of $p|_{B}$ coincides with the closed set 
\begin{equation}
\Sigma \coloneqq \{t \in C \mid q^{-1}(t) \mbox{ is singular }\}.
\end{equation}
\item[\textup{(3)}] It holds that $(-K_{Q})^{3}=40-(8p_{a}(B)+32p_{a}(C))$.
\end{enumerate}
\end{lem}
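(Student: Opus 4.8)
The plan is to construct the link explicitly by fibrewise projection of the quadric surfaces from the point cut out by the section, and then to read off the three numerical assertions from divisor-class bookkeeping. First I would recall that $H_{Q}$ is the relative hyperplane class $\xi|_{Q}$ coming from an embedding $Q \hookrightarrow \P(\mc{E})$ of the quadric fibration into a $\P^{3}$-bundle over $C$, so that $-K_{Q} \sim_{C} 2H_{Q}$ restricts to $\mc{O}(2)$ on each fibre. Blowing up the section, $\varphi \colon \wt{Q} = \Bl_{s} Q \to Q$, the relative Picard number rises to two, and $K_{\wt{Q}} = \varphi^{*}K_{Q} + E_{\varphi}$ gives $-K_{\wt{Q}} \sim_{C} 2\varphi^{*}H_{Q} - E_{\varphi}$. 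I would then verify the hypothesis of Definition \ref{defi:elem} by showing $-K_{\wt{Q}}$ is $(q\circ\varphi)$-ample and exhibiting the two $K_{\wt{Q}}$-negative rays in $\overline{\NE}(\wt{Q}/C)$: testing $-K_{\wt{Q}}$ against the fibre $e$ of $E_{\varphi} \to s$ (which gives $\varphi$) and against the strict transform $\ell$ of a ruling through $s$ shows both have anti-canonical degree one, so the second ray $R = \R_{\geq 0}[\ell]$ is the one contracted by the sought $\psi$.

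Next I would identify the contraction $\psi$ of $R$ and the target $P$. Fibrewise this is the classical projection of a quadric surface from a point lying on it: on a smooth fibre $\Bl_{p} \Q^{2} \cong \Bl_{2\,\mathrm{pts}}\P^{2}$, and $\psi$ contracts the two disjoint $(-1)$-curves given by the strict transforms of the two rulings through $p$, yielding $\P^{2}$. Thus $\psi$ contracts a divisor onto a curve, and by Mori's classification of extremal divisorial contractions of smooth threefolds \cite{Mor} it must be the blow-up along a smooth curve $B \subset P$; since the two contracted points in each fibre assemble into a bisection (and $C$ is a curve, so Tsen's theorem removes any Brauer obstruction), $p \colon P \to C$ is a $\P^{2}$-bundle and $B$ is a $p$-bisection.

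For the class relations (1) I would describe $E_{\psi}$ geometrically as the strict transform of the \emph{tangent divisor} $S \subset Q$ cut out by the tangent planes along $s$ (fibrewise the tangent-plane section of the quadric, i.e.\ the two rulings through $p$). Since $S \sim_{C} H_{Q}$ and $S$ has multiplicity two along $s$, one gets $E_{\psi} \sim_{C} \varphi^{*}H_{Q} - 2E_{\varphi}$; comparing the two canonical-bundle formulas $-K_{\wt{Q}} \sim_{C} 2\varphi^{*}H_{Q} - E_{\varphi} \sim_{C} 3\psi^{*}H_{P} - E_{\psi}$ and using that $\Pic \wt{Q}$ is torsion-free then yields $\psi^{*}H_{P} \sim_{C} \varphi^{*}H_{Q} - E_{\varphi}$ and hence $E_{\varphi} \sim_{C} \psi^{*}H_{P} - E_{\psi}$. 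For (2), the bisection $B$ is traced out by the two contracted rulings, and these two rulings through a point of $s$ are distinct exactly when the fibre is smooth and collapse to a single (doubled) ruling exactly over a quadric cone; thus $p|_{B}$ is branched precisely over $\Sigma$. For (3) I would compute $(-K_{\wt{Q}})^{3}$ in two ways, via the blow-up $\varphi$ of the section $s$ (using $\deg N_{s}Q = -K_{Q}\cdot s + 2p_{a}(s) - 2$ and $p_{a}(s) = p_{a}(C)$) and via the blow-up $\psi$ of $B$, and equate; substituting $-K_{Q} \sim_{C} 2H_{Q}$ and $-K_{P} \sim_{C} 3H_{P}$ expresses everything in terms of $(-K_{Q})^{3}$, $p_{a}(B)$ and $p_{a}(C)$ and collapses to $(-K_{Q})^{3} = 40 - (8p_{a}(B) + 32 p_{a}(C))$.

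The main obstacle will be the analysis over the singular fibres: one must check that, although the two rulings through $s$ degenerate over $t \in \Sigma$, the contraction $\psi$ still produces a smooth fibre $\P^{2}$, the curve $B$ stays smooth with $B \to C$ simply ramified there, and $\psi$ remains a genuine smooth blow-up (type $\mathrm{E1}$) rather than degenerating. This is exactly the content guaranteed by \cite{D'S} and \cite{Fuk}, which I would invoke for the existence statement; alternatively it can be verified by a local computation in coordinates on $\P(\mc{E})$ near a cone fibre. Everything else is formal divisor bookkeeping together with the intersection-number comparison of the last step.
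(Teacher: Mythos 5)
The paper offers no proof of this lemma to compare against: it is imported wholesale from \cite[(2.7.3)]{D'S} and \cite[Proposition 3.1]{Fuk}. Your sketch is, in outline, the standard argument found in those references — blow up the section, check that $-K_{\wt Q}$ is $(q\circ\vp)$-ample (each fibre of $q\circ\vp$ is a degree-$7$ del Pezzo surface, possibly with one $A_{1}$ point), contract the second $K$-negative extremal ray, and recognize the contraction fibrewise as projection of a quadric surface from a point lying on it — followed by routine class bookkeeping for (1)--(3). Two points need more care than you give them. First, before any fibrewise analysis one must know that $s$ avoids the vertex $v$ of each singular fibre $F\cong\Q^{2}_{0}$; this follows because $(s\cdot F)_{Q}=1$ while $\mathrm{mult}_{v}F=2$ for $F$ viewed as a surface in the smooth threefold $Q$, so a section through $v$ would meet $F$ with multiplicity at least $2$. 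Second — and this is exactly the obstacle you flag — over $t\in\Sigma$ the fibrewise picture is not the blow-up of two points but a weighted blow-up of one (the paper itself relies on this in the proof of Lemma \ref{lem:fiber}(2)); verifying that nonetheless $P_{t}\cong\P^{2}$, that $B$ is smooth with $p|_{B}$ simply ramified at $t$, and that $\psi$ is globally an honest blow-up of a smooth curve (type E1 in Mori's classification, which you should invoke only after establishing that the contracted divisor has one-dimensional image) is precisely the content of the cited results, so deferring to \cite{D'S} and \cite{Fuk} there leaves you on the same footing as the paper. As a minor simplification, your ``tangent divisor'' argument for $E_{\psi}\sim_{C}\vp^{*}H_{Q}-2E_{\vp}$ can be replaced by restricting to a single smooth fibre, where $E_{\psi}|_{\wt F}=\ell_{1}+\ell_{2}\sim(\vp^{*}H_{Q}-2E_{\vp})|_{\wt F}$; since $\vp^{*}H_{Q}|_{\wt F}$ and $E_{\vp}|_{\wt F}$ are independent in $\Pic\wt F$, this pins down the class modulo pullbacks from $C$, which is all that $\sim_{C}$ asserts. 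The rest of (1)--(3), including the two-way computation of $(-K_{\wt Q})^{3}$ for (3), is correct bookkeeping.
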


\begin{lem}\label{lem:fiber}
We follow the notation of Lemma \ref{lem:q-p}. Let $E \coloneqq \psi_{*}(E_\vp)$. 
Suppose that $H_{Q}$ is a prime divisor containing $s$ and assume that $H_{Q}$ is normal. 
Then $(H_{Q})_{\wt Q} \sim_{C} \psi^{*} H_{P}$.

Moreover, when $H_{P} = (H_{Q})_{P}$, the following holds for $t \in C$. 
\begin{enumerate}
\item[\textup{(1)}] If $t \not\in \Sigma$, then $(q|_{H_{Q}})^{-1}(t)$ is
\begin{eqnarray*}
\mathrm{smooth} &\iff& p^{-1}(t) \cap B \cap H_{P}= \emptyset.\\
\mathrm{reducible} &\iff& p^{-1}(t) \cap B \cap H_{P} \neq  \emptyset.\nonumber
\end{eqnarray*}
\item[\textup{(2)}] If $t \in \Sigma$, then $(q|_{H_{Q}})^{-1}(t)$ is
\begin{eqnarray*}
\mathrm{smooth} &\iff& p^{-1}(t) \cap B \cap H_{P}= \emptyset.\\
\mathrm{reducible} &\iff& p^{-1}(t) \cap B \cap H_{P} \neq \emptyset \mathrm{\ and\ } E|_{p^{-1}(t)} \neq H_{P}|_{p^{-1}(t)}.\nonumber\\
\mathrm{non\mbox{-}reduced} &\iff& E|_{p^{-1}(t)}=H_{P}|_{p^{-1}(t)}.\nonumber
\end{eqnarray*}
\end{enumerate}
\end{lem}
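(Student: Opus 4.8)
The plan is to analyze the elementary link $\varphi, \psi$ of Lemma~\ref{lem:q-p} fiber by fiber over $C$, tracking how the prime normal divisor $H_Q$ behaves under blow-up and blow-down. First I would establish the linear equivalence $(H_Q)_{\wt Q} \sim_C \psi^* H_P$. Since $H_Q$ contains the section $s$, its strict transform satisfies $(H_Q)_{\wt Q} \sim_C \varphi^* H_Q - E_\varphi$, and combining this with the relation $E_\varphi \sim_C \psi^* H_P - E_\psi$ from Lemma~\ref{lem:q-p}(1) together with $\varphi^* H_Q \sim_C \psi^* H_P + E_\varphi$ (which follows from comparing the two expressions for $-K_{\wt Q}$) should yield the claim after a short calculation in $\Pic \wt Q$. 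The normality of $H_Q$ enters here to guarantee that the strict transform is well-behaved and that intersection-theoretic computations on $H_Q$ make sense.

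\textbf{The fiberwise dictionary.} With the equivalence $(H_Q)_{\wt Q} \sim_C \psi^* H_P$ in hand, and under the hypothesis $H_P = (H_Q)_P$, the key observation is that the link restricts over each point $t \in C$ to a birational transformation between the surface fiber $q^{-1}(t)$ and the $\P^2$-fiber $p^{-1}(t)$, whose center on the $P$-side is the finite scheme $p^{-1}(t) \cap B$ and whose exceptional behavior is governed by $E = \psi_*(E_\varphi)$. The divisor $(q|_{H_Q})^{-1}(t)$ is the intersection of $H_Q$ with the fiber $q^{-1}(t)$, a conic in that quadric surface, and I would translate its geometry (smooth conic, pair of lines, or double line) into incidence conditions between $B$, $H_P$, and the fiber $p^{-1}(t)$ on the $\P^2$-bundle side. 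The natural tool is to compute the restriction $(H_Q)_{\wt Q}|_{p^{-1}(t)}$ on $\wt Q$ via both projections: pulling back $\psi^* H_P$ to the fiber gives a line or conic in $p^{-1}(t) \cong \P^2$ passing through the points of $B$, while pushing forward by $\varphi$ recovers the conic $(q|_{H_Q})^{-1}(t)$.

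\textbf{Splitting into the smooth and singular cases.} For $t \notin \Sigma$, the fiber $q^{-1}(t)$ is a smooth quadric $\P^1 \times \P^1$ and $B$ meets $p^{-1}(t)$ in two distinct points (by Lemma~\ref{lem:q-p}(2), $\Sigma$ is exactly the branch locus of $p|_B$). The conic $(q|_{H_Q})^{-1}(t)$ is then smooth precisely when it avoids the curve on $H_Q$ that $\psi$ contracts — equivalently when $p^{-1}(t) \cap B \cap H_P = \emptyset$ — and is reducible (a pair of rulings) exactly when it passes through one of the two points of $B$, i.e.\ when $p^{-1}(t) \cap B \cap H_P \neq \emptyset$. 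For $t \in \Sigma$, the fiber $q^{-1}(t)$ is the quadric cone $\Q^2_0$ and $B$ is tangent to $p^{-1}(t)$ along a single point. Here a third, degenerate possibility arises: the restricted divisor can become non-reduced (a double ruling). I would distinguish the reducible from the non-reduced case by comparing $E|_{p^{-1}(t)}$ with $H_P|_{p^{-1}(t)}$: non-reducedness occurs exactly when these two lines in $p^{-1}(t) \cong \P^2$ coincide, forcing the strict transform to pick up the exceptional divisor with multiplicity.

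\textbf{The main obstacle.} The hard part will be the singular case $t \in \Sigma$, where I must carefully resolve the interaction between the tangency of $B$ along $p^{-1}(t)$ and the position of $H_P$. Distinguishing \emph{reducible} from \emph{non-reduced} requires a local analysis of the scheme structure of $(H_Q)_{\wt Q} \cap \varphi^{-1}(q^{-1}(t))$ on $\wt Q$ and its image under $\psi$; the subtlety is that both outcomes have $B$ meeting the fiber, so the coarse incidence $p^{-1}(t) \cap B \cap H_P \neq \emptyset$ does not suffice, and one genuinely needs the finer criterion $E|_{p^{-1}(t)} = H_P|_{p^{-1}(t)}$. I expect this to reduce to a concrete computation in local coordinates on the $\P^2$-bundle near the blown-up point of $B$, comparing the multiplicity with which $H_Q$ vanishes along the ruling against the multiplicity contributed by the exceptional divisor $E_\varphi$.
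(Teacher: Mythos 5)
Your proposal is correct and follows essentially the same route as the paper: derive $(H_Q)_{\wt Q}\sim_C\psi^*H_P$ from the divisor relations of Lemma \ref{lem:q-p}(1), then restrict the link to each fiber and translate the trichotomy smooth/reducible/non-reduced for the conic $(q|_{H_Q})^{-1}(t)$ into incidence conditions with $B$ and $E$, treating $t\notin\Sigma$ (where $Q_t\cong\F_0$ and $B$ meets $P_t$ in two points) and $t\in\Sigma$ (where $Q_t\cong\Q^2_0$, $\psi_t$ is a weighted blow-up, and $E|_{Q_t}$ is a double ruling through $s_t$) separately. The paper settles your ``main obstacle'' exactly as you anticipate, by observing that in the singular case $E|_{Q_t}=2l'$ for the unique ruling $l'$ through $s_t$, so non-reducedness of $H_t$ forces $H_t=E|_{Q_t}$ while reducibility gives $H_t=l+l'$ with $l\neq l'$.
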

\begin{proof}
the first assertion follows from Lemma \ref{lem:q-p} (1).
we take the following diagram as the base change of (\ref{diag:q-p}) at $t \in C$:
\begin{equation}\label{diag:qpfiber}
\xymatrix@!C=15pt@R=15pt{
&\Bl_{s_{t}} Q_{t}=\wt{Q_{t}}=\Bl_{B_{t}} P_{t} \ar[dl]_-{\vp_{t}} \ar[dr]^-{\psi_{t}}
&
\\Q_{t}
&
&P_{t}. 
}
\end{equation}
Write $G_{t} \coloneqq (\vp_{*} E_{\psi})|_{Q_{t}}$, $H_{t} \coloneqq H_{Q}|_{Q_{t}}$, $s_{t} \coloneqq s|_{Q_{t}}$ and $B_{t} \coloneqq B|_{P_{t}}$.

\noindent(1): In this case we have $Q_{t} \cong \F_{0}$.
By lemma \ref{lem:q-p} (1),  it holds that $H_{t} \sim G_{t} \sim \Sigma_{0}+f_{0}$ and $G_{t}$ is the union of two rulings containing $s_{t}$. 
Since $H_{t}$ is smooth if and only if $H_{t}$ is irreducible, we only have to show that $H_{t}$ is smooth $\Rightarrow (H_{t})_{P_{t}} \cap B_{t}= \emptyset \Rightarrow H_{t}$ is irreducible.

Let $G_{t} =G_{1} + G_{2}$ be the irreducible decomposition. 
Note that $(H_{t} \cdot G_{i})_{Q_{t}}=1$ for $i=1,2$.
Suppose that $H_{t}$ is smooth. 
Then $H_{t} \cap G_{i} = s_{t}$ scheme-theoretically for $i=1,2$. 
Hence we have $(H_{t})_{\wt Q_{t}} \cap E_{\psi_{t}} =\emptyset$ and $(H_{t})_{P_{t}} \cap B_{t}= \emptyset$.
On the other hand, if $(H_{t})_{P_{t}} \cap B_{t}= \emptyset$, then $(H_{t})_{\wt Q_{t}} \cong (H_{t})_{P_{t}}$ is irreducible and so is $H_{t}$, and (1) is proved.

\noindent(2):
In this case, $\psi_{t}$ is a weighted blow-up and 
hence $Q_{t} \cong \Q^{2}_{0}$.
Since $(s \cdot Q_{t})_{Q}=1$, the point $s_{t}$ is not the vertex of $Q_{t} \cong \Q^{2}_{0}$. 
By lemma \ref{lem:q-p} (1), it holds that $H_{t} \sim G_{t} \sim \mc{O}_{\Q^{2}_{0}}(1)$, and we have $G_{t} = 2l'$, where $l'$ is the unique ruling of $\Q^{2}_{0}$ containing $s_{t}$.

Suppose that $H_{t}$ is smooth. 
Since $H_{t} \cap l' =s_{t}$ scheme-theoretically, we have $(H_{t})_{\wt Q_{t}} \cap E_{\psi_{t}}=\emptyset$ and hence $(H_{t})_{P_{t}} \cap B_{t}= \emptyset$.

Suppose that $H_{t}$ is reducible.
Then $H_{t}$ is the union of two distinct ruling of $Q_{t} \cong \Q^{2}_{0}$.
Since $s_{t} \in H_{t}$, there exists a ruling $l \neq l'$ of $Q_{t}$ such that $H_{t}=l+l'$.
Since $H_{t}$ is smooth at $s_{t}$, it holds that $(H_{t})_{\wt Q_{t}}=l_{\wt Q_{t}}+E_{\psi_{t}}$.
Hence $(H_{t})_{P_{t}}=l_{P_{t}}$ contains $\Supp B_{t}$ but $(H_{P})|_{P_{t}} \neq E|_{P_{t}}$

Suppose that $H_{t}$ is non-reduced.
Then $\Supp H_{t}$ is a ruling of $Q_{t} \cong \Q^{2}_{0}$.
Since $s_{t} \in H_{t}$, we have $H_{t}=G_{t}$ and hence $H_{P}|_{P_{t}} = E|_{P_{t}}$.

Combining these results, we complete the proof.
\end{proof}

\subsection{Elementary links between quadric fibrations}\label{sec:elemqq}

B.\,Hassett and Y.\, Tschinkel \cite{H-T} considered elementary links between quadric fibrations with center a ruling in a smooth fiber. 
We can prove that a similar elementary link appears in the case of a singular fiber as follows.

\begin{lem}\label{lem:q-q}
Let $q \colon Q \ra C$ be a quadric fibration and $l$ a ruling of a $q$-fiber.
Let $\vp \colon \wt{Q}=\Bl_{l} Q \ra Q$ be the blow-up of $Q$ along $l$. 
Then there exists a divisorial contraction $\psi \colon \wt{Q} \ra Q'$ over $C$ such that the induced morphism $q' \colon Q \ra C$ is a quadric fibration and $\psi$ is the blow-up along a ruling $l'$ of a $q'$-fiber.
\begin{equation}
\xymatrix@!C=15pt@R=15pt{
&\Bl_{l} Q=\wt{Q}=\Bl_{l'} Q' \ar[dl]_-{\vp} \ar[dr]^-{\psi}
&
\\Q \ar[d]_-{q}
&
&Q' \ar[d]^-{q'}
\\ C \ar@{=}[rr]
& 
&C.
}
\end{equation}
\end{lem}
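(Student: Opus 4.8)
The plan is to place the ruling $l$ into the framework of Definition \ref{defi:elem}: once we verify that $-K_{\wt Q}$ is $(q\circ\vp)$-ample, the contraction $\psi$ of the second $K_{\wt Q}$-negative ray of $\ol\NE(\wt Q/C)$ exists automatically, and it only remains to identify $\psi$ as the blow-up along a ruling $l'$ of a $q'$-fiber with $q'$ a quadric fibration. Since $\rho(Q/C)=1$ and $\vp$ is a smooth blow-up along the smooth rational curve $l$, we have $\rho(\wt Q/C)=2$, so $\ol\NE(\wt Q/C)$ has exactly two extremal rays; one, $R_{1}$, is generated by a fiber $f_{\vp}$ of $E_{\vp}\to l$, and the whole task is to control the other ray $R_{2}$.

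First I would compute the normal bundle of the ruling. Writing $H$ for a $q$-ample divisor with $2H\sim_{C}-K_{Q}$, adjunction on $l$ with $(H\cdot l)=1$ gives $\deg N_{l}Q=(-K_{Q}\cdot l)-2=0$. Two cases occur according to the fiber $F_{0}\coloneqq q^{-1}(q(l))$. If $F_{0}$ is smooth, then $l$ is a ruling of $\P^{1}\times\P^{1}$ and $N_{l}Q\cong\mc{O}_{\P^{1}}^{\oplus 2}$, so $E_{\vp}\cong\F_{0}$. If $F_{0}\cong\Q^{2}_{0}$, then $l$ is forced to pass through the vertex, and a local computation in the ambient $\P^{3}$-bundle (using that $Q$ is smooth at the vertex) shows $N_{l}Q\cong\mc{O}_{\P^{1}}(1)\oplus\mc{O}_{\P^{1}}(-1)$, so $E_{\vp}\cong\F_{2}$. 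In either case one checks $(-K_{\wt Q}\cdot f_{\vp})=(-E_{\vp}\cdot f_{\vp})=1>0$ on $R_{1}$, and a direct intersection computation on a generator of $R_{2}$ — the strict transform of a ruling of the other family when $F_{0}$ is smooth, and the analogous curve when $F_{0}\cong\Q^{2}_{0}$ — gives positivity there as well; hence $-K_{\wt Q}$ is $(q\circ\vp)$-ample and Definition \ref{defi:elem} yields $\psi\colon\wt Q\to Q'$.

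Next I would pin down the nature of $\psi$. The strategy is to show that $R_{2}$ is generated by curves sweeping out the strict transform $\wt{F_{0}}$ of $F_{0}$ and that $\psi$ contracts $\wt{F_{0}}$ onto a curve $l'$; that is, $\psi$ is a divisorial contraction of a divisor to a curve. By \cite[Theorem 3.3]{Mor} such a contraction of a smooth $3$-fold is the blow-up of a smooth $3$-fold $Q'$ along a smooth curve $l'$, with $E_{\psi}=\wt{F_{0}}$. Since $\rho(Q'/C)=\rho(\wt Q/C)-1=1$ and the link is an isomorphism over $C\setminus\{q(l)\}$, the general $q'$-fiber is isomorphic to the general $q$-fiber, a smooth quadric; thus $q'$ is again a del Pezzo fibration of degree $8$, i.e.\ a quadric fibration. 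Finally $F_{0}'\coloneqq q'^{-1}(q(l))=\psi(E_{\vp})$ is a quadric ($\F_{0}$ pushes to a smooth quadric, $\F_{2}$ pushes to $\Q^{2}_{0}$ by contracting $\Sigma_{2}$), and computing $(H'\cdot l')=1$ for $2H'\sim_{C}-K_{Q'}$ identifies $l'$ as a ruling of $F_{0}'$.

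The main obstacle is the singular-fiber case. When $F_{0}\cong\Q^{2}_{0}$ everything degenerates: $E_{\vp}$ is $\F_{2}$ rather than $\F_{0}$, both $\wt{F_{0}}$ and $F_{0}'$ are singular quadrics, and one must rule out that $\psi$ contracts a divisor to a point or is a small (flipping) contraction, as well as confirm that $Q'$ is smooth and that $l'$ genuinely passes through the vertex of the new cone $F_{0}'$. I expect this to require the explicit local model of $Q$ as a quadric in a $\P^{3}$-bundle near $q(l)$, both to establish $N_{l}Q\cong\mc{O}_{\P^{1}}(1)\oplus\mc{O}_{\P^{1}}(-1)$ and to verify the positivity of $-K_{\wt Q}$ on $R_{2}$ and the divisor-to-curve type of $\psi$ in the presence of the $(-2)$-curve $\Sigma_{2}\subset E_{\vp}$; the smooth-fiber case then reads off as the degeneration-free specialization of the same computation.
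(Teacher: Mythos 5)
Your outline coincides with the paper's strategy --- dispose of the smooth-fiber case by \cite[\S 5]{H-T}, compute $N_{l}Q\cong\mc{O}_{l}(1)\oplus\mc{O}_{l}(-1)$ so that $E_{\vp}\cong\F_{2}$, and then exhibit the second contraction --- but the two steps you defer to an ``explicit local model'' are precisely the content of the lemma, so as written the proof is incomplete. The paper needs no local model for either. For the normal bundle it first blows up the vertex $v$ of $F\cong\Q^{2}_{0}$: in $Q_{1}=\Bl_{v}Q$ the strict transform $F_{1}$ of $F$ is $\F_{2}$ with $F_{1}\sim h^{*}F-2E_{h}$, so the normal bundle sequence gives $N_{l_{1}}Q_{1}\cong\mc{O}\oplus\mc{O}(-2)$, and Lemma \ref{lem:blnorm}\,(2) then converts this into $N_{l}Q\cong\mc{O}(1)\oplus\mc{O}(-1)$.

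For the second contraction, the missing idea is the symmetry of the configuration $E_{\vp}\cup F_{\wt Q}$. By Lemma \ref{lem:blnorm}\,(1) the strict transform $F_{\wt Q}$ is again $\F_{2}$ (it is the minimal resolution of $F$), and from $(E_{\vp}+F_{\wt Q})|_{E_{\vp}}\sim 0$ together with $E_{\vp}|_{E_{\vp}}\sim-(\Sigma_{2}+f_{2})$ one gets $F_{\wt Q}|_{E_{\vp}}=C_{1}+C_{2}$ with $C_{1}\sim\Sigma_{2}$, $C_{2}\sim f_{2}$ in $E_{\vp}$ but $C_{1}\sim f_{2}$, $C_{2}\sim\Sigma_{2}$ in $F_{\wt Q}$. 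Hence $\ol{\NE}(\wt Q/C)$ is spanned by $[C_{1}]$ and $[C_{2}]$, both of anticanonical degree one (which in particular gives the relative ampleness of $-K_{\wt Q}$ you wanted), the second ray is swept out exactly by the fibers of $F_{\wt Q}\to\P^{1}$, and its contraction is the blow-down of $F_{\wt Q}$ along its ruling, i.e.\ the mirror image of $\vp$. This symmetric picture is what simultaneously excludes a small or divisor-to-point contraction, gives smoothness of $Q'$ via \cite[Theorem 3.3]{Mor}, and shows that $l'$ is a ruling through the vertex of $\psi(E_{\vp})\cong\Q^{2}_{0}$. Until you supply this (or an equivalent) computation, the singular-fiber case --- the only case actually requiring proof --- remains open in your write-up.
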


\begin{proof}
Let $F \subset Q$ be the $q$-fiber containing $l$. When $F$ is smooth, then the assertion is already shown by \cite[\S 5]{H-T}.
Hence we may assume that $F \cong \Q^{2}_{0}$. 

First we calculate $N_{l}Q$. 
Let $v \in F$ be the vertex of $\Q^{2}_{0}$ and $h \colon Q_{1} \ra Q$ the blow-up at $v$.
Let $F_{1}$ (resp.\ $l_{1}$) be the strict transform of $F$ (resp.\ $l$) in $Q_{1}$. 
Then $l_{1}$ is a fiber of $F_{1} \cong \F_{2}$ and $F_{1} \sim h^{*}F -2E_ h$. 
Since $N_{l_{1}}F_{1} \cong \mc{O}_{l_{1}}$ and $(N_{F_{1}}Q_{1})|_{l_{1}} \cong \mc{O}_{l_{1}}((F_{1} \cdot l_{1})) \cong \mc{O}_{l_{1}}(-2)$, we have $N_{l_{1}}Q_{1} \cong \mc{O}_{l_{1}} \oplus \mc{O}_{l_{1}}(-2)$ by the normal bundle sequence. 
Hence $N_{l}Q=\mc{O}_{l}(1) \oplus \mc{O}_{l}(-1)$ by Lemma \ref{lem:blnorm} (2)

Therefore we have $E_{\vp} \cong \F_{2}$ and $E_{\vp}|_{E_{\vp}} \sim -(\Sigma_{2} +f_{2})$. 
Since $F_{1} \cong \F_{2}$, it follows that $F_{\wt Q} \cong \F_{2}$ from Lemma \ref{lem:blnorm} (1).
Since $(E_{\vp}+F_{\wt Q})|_{E_{\vp}}=\vp^{*}F|_{E_{\vp}} \sim 0$, we have $F_{\wt Q}|_{E_{\vp}} \sim \Sigma_{2} +f_{2}$.
Hence $F_{\wt Q}|_{E_{\vp}}$ is the sum of $C_{1} \sim \Sigma_{2}$ and $C_{2} \sim f_{2}$.
On the other hand, in $F_{\wt Q}$, we have $C_{1} \sim f_{2}$ and $C_{2} \sim \Sigma_{2}$ because $F_{\wt Q}$ is the minimal resolution of $F$.
By symmetry of $E_{\vp}$ and $F_{\wt Q}$, there is the blow-down of $F_{\wt Q}$ as desired.
\end{proof}

The following is the key to proving Theorem \ref{thm:maindiag-A} (1).

\begin{lem}\label{lem:nnorm}
Let $q \colon Q \ra C$ be a quadric fibration and $D_{h} \subset Q$ a prime divisor such that $2D_{h} \sim_{C} -K_{Q}$.
Suppose that $D_{h}$ is non-normal. Let $R$ be the $1$-dimensional component of $\Sing D_{h}$. Then:
\begin{enumerate}
\item[\textup{(1)}] $R$ is a $q$-section.
\item[\textup{(2)}] If we take the elementary link $Q \xla{\vp} \wt P \xra{\psi} P$ with center along $R$, then we have $D_{h}=(E_{\psi})_{Q}$. In particular, we have $R=\Sing D_{h}$.
\end{enumerate}
\end{lem}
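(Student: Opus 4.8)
The plan is to read off the structure of $D_h$ from the conic bundle $q|_{D_h}\colon D_h\to C$. Since $2D_h\sim_C -K_Q$ we have $D_h\sim_C H_Q$, so $D_h$ restricts on each fibre $F_t:=q^{-1}(t)$ to a hyperplane section of the quadric; thus $q|_{D_h}$ is flat with fibres plane conics. As $D_h$ is a hypersurface in the smooth threefold $Q$ it is Gorenstein, hence satisfies Serre's condition $S_2$, so $D_h$ is non-normal exactly when it fails $R_1$, i.e. when $\Sing D_h$ is one-dimensional. Therefore $R$ is the whole non-normal locus, and the first task is to show that $R$ is horizontal of degree one over $C$.

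First I would pin down the general fibre $\gamma_t:=D_h|_{F_t}$ (so $F_t\cong\F_{0}$ and $\gamma_t\in|(1,1)|$). The key local remark is that if $x\in\Sing D_h$ lies on a fibre $F_t$ smooth at $x$, then, writing $G$ for a local equation of $D_h$, $dG(x)=0$ forces $dG|_{T_xF_t}=0$, so $\gamma_t$ is singular at $x$ as well. Hence, over the locus where the $q$-fibres are smooth, the one-dimensional part of $\Sing D_h$ projects into the singular loci of the conics $\gamma_t$. I would then argue that the general $\gamma_t$ is a \emph{reducible} conic, a pair of rulings $l_1+l_2$ meeting in one point: if instead $\gamma_t$ were smooth for general $t$, then flatness together with smoothness of the general fibre would make $D_h$ smooth over a dense open subset of $C$, so $R$ would lie in finitely many fibres, and by the local remark each vertical component would force the corresponding conic $\gamma_{t_0}$ to be singular along a curve, i.e. a double line $2l_0$ on a quadric cone $\Q^{2}_{0}$.

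This double-line/vertical case is the main obstacle, and it is where I expect the hypothesis $\rho(Q/C)=1$ to be essential. Because the relative Picard number is one, every $q$-fibre is irreducible, hence a smooth quadric or a quadric cone $\Q^{2}_{0}$ (rank $\ge 3$); I would show that a one-dimensional vertical component of $\Sing D_h$ over such a cone fibre forces a second-order degeneration of the family of conics that cannot coexist with the irreducibility of all $q$-fibres, giving a contradiction. Granting this, the general $\gamma_t$ is the reducible conic $l_1+l_2$, whose unique node sweeps out $R$; since there is exactly one node per fibre, $R\to C$ has degree one, i.e. $R$ is a $q$-section, and moreover $\Sing D_h=R$ with no isolated points. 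This proves (1).

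For (2), $R$ being a section lets me apply Lemma~\ref{lem:q-p} with $s=R$, giving the link $Q\xla{\vp}\wt Q\xra{\psi}P$ to a $\P^{2}$-bundle with $E_\psi\sim_C\vp^{*}H_Q-2E_\vp$, whence $(E_\psi)_Q=\vp_{*}E_\psi\sim_C H_Q\sim_C D_h$. To upgrade this linear equivalence to an equality I would compare restrictions to a general fibre $F_t$: blowing up the node $p=R\cap F_t$ and then contracting by $\psi$ contracts precisely the strict transforms of the two rulings through $p$, so $(E_\psi)_Q|_{F_t}=l_1\cup l_2=\gamma_t=D_h|_{F_t}$. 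As $(E_\psi)_Q$ and $D_h$ are prime divisors agreeing on the general fibre, they coincide, giving $D_h=(E_\psi)_Q$; the restriction of $\vp$ to $E_\psi$ is then the normalization of $D_h$, two-to-one over $R$ and an isomorphism elsewhere, which reconfirms $\Sing D_h=R$.
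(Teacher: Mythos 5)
Your reduction of the problem is sound and essentially follows the paper's route: a vertical $1$-dimensional component of $\Sing D_{h}$ forces the fibre to be $\Q^{2}_{0}$ with $D_{h}|_{F}=2r$ a double ruling (a $(1,1)$-divisor on $\F_{0}$ cannot be non-reduced along a curve), and once vertical components are excluded, the fibrewise uniqueness of the singular member of $|\Sigma_{0}+f_{0}|$ through a given point gives both that $R$ is a section and that $D_{h}=(E_{\psi})_{Q}$; your part (2) is correct and matches the paper (your premature claim ``$\Sing D_h=R$ with no isolated points'' at the end of part (1) is unjustified there, but you do legitimately recover it in part (2) from smoothness of $E_{\psi}$).

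However, there is a genuine gap exactly where you write ``Granting this'': the exclusion of the double-line case is the only nontrivial step of part (1), and your proposed mechanism --- that ``a second-order degeneration of the family of conics cannot coexist with the irreducibility of all $q$-fibres'' --- is not an argument; it is not even clear what statement about the conic bundle $q|_{D_h}$ you would contradict, since irreducibility of the $q$-fibres has already been fully used in reducing to the case $F\cong\Q^{2}_{0}$, $D_h|_F=2r$. The paper closes this case by pure intersection theory: let $\chi\colon\wt Q\to Q$ be the blow-up along the ruling $r$, so that $F_{\wt Q}\cong\F_{2}$ (computed via $N_{r}Q\cong\mc{O}(1)\oplus\mc{O}(-1)$, as in Lemma \ref{lem:q-q}); since $D_h$ has multiplicity $2$ along $r$, one has $(D_h)_{\wt Q}\sim\chi^{*}D_h-2E_{\chi}$, and the two numbers $((D_h)_{\wt Q}^{2}\cdot F_{\wt Q})=-2$ and $((D_h)_{\wt Q}\cdot F_{\wt Q}^{2})=-1$ force $(D_h)_{\wt Q}|_{F_{\wt Q}}\sim-\Sigma_{2}$, which is absurd because $F_{\wt Q}\not\subset (D_h)_{\wt Q}$ makes that restriction an effective divisor class. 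Without some computation of this kind (or an equivalent substitute), your proof of (1) is incomplete.
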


\begin{proof}
\noindent(1): Let $r$ be an irreducible component of $R$. 
To seek a contradiction, assume that $q(r)$ is a point. Take a $q$-fiber $F$ containing $r$. 
Since $D_{h}$ is singular along $r$, the restriction $D_{h}|_{F}$ is non-reduced along $r$.
If $F \cong \F_{0}$, then $D_{h}|_{F}$ is reduced since $D_{h}|_{F} \sim \Sigma_{0} +f_{0}$, a contradiction.
Therefore $F \cong \Q^{2}_{0}$ and there is a ruling $r \subset F$ such that $D_{h}|_{F}=2r$.

Let $\chi \colon \wt Q \ra Q$ be the blow-up along $r$. In the proof of Lemma \ref{lem:q-q}, we have shown that $F_{\wt Q} \cong \F_{2}$ and $F_{\wt Q}|_{F_{\wt Q}} \sim -E_{\chi}|_{F_{\wt Q}} \sim -(\Sigma_{2}+f_{2})$. 
Hence we have:
\begin{eqnarray}
\ \ \ \ \ \ \ ((D_{h})_{\wt Q}^{2} \cdot F_{\wt Q})&=&((\chi^{*}D_{h}-2 E_{\chi})^{2} \cdot (\chi^{*}F-E_{\chi}))\label{eq:2-4-2}\\
                                       &=&(D_{h}^{2} \cdot F)_{Q} -4(D_{h} \cdot r)_{Q} -4(F \cdot r)_{Q} -4E_{\chi}^{3}=-2,\nonumber\\
\ \ \ \ \ \ \ ((D_{h})_{\wt Q} \cdot F_{\wt Q}^{2})&=&((\chi^{*}D_{h}-2 E_{\chi}) \cdot (\chi^{*}F-E_{\chi})^{2})\label{eq:2-4-3}\\
                                       &=&(D_{h} \cdot F^{2})_{Q}-(D_{h} \cdot r)_{Q}-4(F \cdot r)_{Q}-2E_{\chi}^{3}=-1.\nonumber
\end{eqnarray}
Take $a, b \in \Z$ such that $(D_{h})_{\wt Q}|_{F_{\wt Q}} \sim a\Sigma_{2}+bf_{2}$. By (\ref{eq:2-4-2}) and (\ref{eq:2-4-3}), we have $-2a^{2}+2ab=-2$ and $a-b=-1$. Hence $(a, b)=(-1, 0)$, which is absurd.

Therefore $r$ dominates $C$. 
Let $F$ be a smooth $q$-fiber. 
Then we have $\emptyset \neq \Supp(R \cap F) \subset \Sing (D_{h}|_{F})$.
Since $D_{h}|_{F} \sim \Sigma_{0}+f_{0}$, it follows that $\Supp(R \cap F)=\Sing (D_{h}|_{F})$ is a point and hence $R$ is a $q$-section.

\noindent(2): By Lemma \ref{lem:q-p} (1), $(E_{\psi})_{Q}$ is singular along $R$.
For each smooth $q$-fiber $F$, there is the unique member of $|\Sigma_{0}+f_{0}|$ singular at $\Supp(R \cap F)$. Hence $D_{h}|_{F}=(E_{\psi})_{Q}|_{F}$, and the first assertion follows.
Since $\vp$ is the blow-up along $R$ and $E_{\psi}$ is smooth, the last assertion follows.
\end{proof}

\section{Topological invariants of the ambient space}\label{sec:top}

In this section, we determine the Hodge diamonds of del Pezzo fibrations containing affine homology $3$-cells, and that of the base curves.

\begin{lem}\label{lem:sub-1}
Let $f \colon X \to C$ be a del Pezzo fibration and $D$ a reduced effective divisor on $X$ such that $X \setminus D$ is an affine homology $3$-cell. Then the Hodge diamond of $X$ is as follows:
\begin{equation*}
{\tiny\xymatrix@=-1ex{
&&& 1 \\
&& 0 && 0 \\
& 0 && 2 && 0 \\
0\ \ \  && h^{1,2}(X) && h^{1,2}(X) && \ \ \ 0\\
& 0 && 2 && 0 \\
&& 0 && 0 \\
&&& 1 
}}
\end{equation*}
Moreover, It holds that $C \cong \P^{1}$.
\end{lem}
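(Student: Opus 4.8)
The plan is to read off the topology of $X$ from the acyclicity of $U \coloneqq X \setminus D$ together with the fibration structure, and then upgrade the Betti numbers to Hodge numbers using rational connectedness. First I would run the long exact cohomology sequence of the pair $(X,U)$. Since $U$ is an affine homology $3$-cell, $\tilde H_*(U,\Z)=0$, so $H^k(U,\Q)=0$ for all $k\geq 1$; feeding this into the sequence yields isomorphisms $H^k(X,U;\Q)\cong H^k(X,\Q)$ for every $k\geq 1$ (surjectivity from $H^k(U)=0$, injectivity from $H^{k-1}(U)=0$, with the degrees $k=1,2$ checked by hand). By Alexander--Lefschetz duality on the compact oriented real $6$-manifold $X$ one has $H^k(X,U;\Q)\cong H_{6-k}(D,\Q)$, hence $H^k(X,\Q)\cong H_{6-k}(D,\Q)$ for $k\geq 1$. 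As $\dim_\R D=4$, the group $H_5(D,\Q)$ vanishes, whence $b_1(X)=0$. Because $f$ has connected fibers, the Leray sequence gives an injection $f^*\colon H^1(C,\Q)\hookrightarrow H^1(X,\Q)$, so $H^1(X,\Q)=0$ forces $H^1(C,\Q)=0$; thus $g(C)=0$ and $C\cong\P^1$, which is the final assertion of the lemma.

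Next I would compute the Hodge numbers. Knowing $C\cong\P^1$ and that a general $f$-fiber is a smooth del Pezzo surface, hence rational, the fibration exhibits $X$ as rationally connected. Consequently $H^0(X,\Omega_X^p)=0$ for all $p\geq 1$, giving $h^{p,0}(X)=h^{0,p}(X)=0$ for $p=1,2,3$; in particular $h^{1,0}=h^{2,0}=h^{3,0}=0$. (Alternatively, restriction to a general fiber together with the relative cotangent sequence kills $H^0(\Omega_X^2)$ and $H^0(\Omega_X^3)$ directly, using only $h^{1,0}=h^{2,0}=0$ for del Pezzo surfaces, and does not even require $C\cong\P^1$.) Since $h^{2,0}=0$, every class in $H^2(X,\Q)$ is of type $(1,1)$ and algebraic, so $h^{1,1}(X)=b_2(X)=\rho(X)$. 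As $f$ is a Mori fibre space over a curve, $\rho(X)=\rho(\P^1)+1=2$, whence $h^{1,1}(X)=2$.

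Finally, the remaining entries are forced by Hodge symmetry $h^{p,q}=h^{q,p}$ and Serre duality $h^{p,q}=h^{3-p,3-q}$: these propagate $h^{0,0}=1$ and $h^{1,1}=2$ to $h^{3,3}=1$ and $h^{2,2}=2$, kill the outer diagonals through $h^{2,0}=h^{3,0}=0$, and leave $h^{2,1}=h^{1,2}(X)$ as the single free parameter, reproducing exactly the displayed diamond. I expect the one genuinely delicate point to be the duality bookkeeping in the first step: correctly identifying $H^k(X,U)$ with $H_{6-k}(D)$ and pinning down the degree range in which $H^k(U)=0$ makes $H^k(X,U)\to H^k(X)$ an isomorphism. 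Once $b_1(X)=0$ and $C\cong\P^1$ are secured, the rational-connectedness input and the Mori-fibre-space identity $\rho(X)=2$ make the rest routine.
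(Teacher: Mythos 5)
Your proof is correct, and the overall skeleton (kill $h^{p,0}$, deduce $h^{1,1}=b_{2}=\rho=2$, propagate by Hodge symmetry and Serre duality) matches the paper's, but the two key vanishings are reached by genuinely different routes. For $b_{1}(X)=0$ the paper cites van de Ven's result $H^{5}(X,\Z)\cong H^{5}(D,\Z)=0$ and applies Poincar\'e duality; your Lefschetz-duality chain $H^{1}(X,\mathbb{Q})\cong H^{1}(X,U;\mathbb{Q})\cong H_{5}(D,\mathbb{Q})=0$ is the same mechanism carried out by hand, and the bookkeeping you were worried about does go through over $\mathbb{Q}$. The real divergence is in killing $h^{2,0}$ and $h^{3,0}$: the paper applies the relative Kawamata--Viehweg vanishing theorem to the $f$-ample $-K_{X}$ to get $h^{i}(X,\mathcal{O}_{X})=h^{i}(C,\mathcal{O}_{C})$ for all $i$, which disposes of $h^{0,2}$ and $h^{0,3}$ in one stroke and, once $h^{0,1}(X)=0$ is known, also forces $h^{1}(C,\mathcal{O}_{C})=0$ and hence $C\cong\P^{1}$. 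You instead establish $C\cong\P^{1}$ first via the Leray injection $H^{1}(C,\mathbb{Q})\hookrightarrow H^{1}(X,\mathbb{Q})$ and then invoke rational connectedness of $X$ (hence Graber--Harris--Starr) to kill all $h^{p,0}$. Both arguments are valid; the paper's is lighter, trading the GHS theorem for a relative vanishing theorem that is tailor-made for Mori fibre spaces and packaging the genus-zero conclusion with the vanishing, while your fiber-restriction variant has the mild virtue of not needing $C\cong\P^{1}$ at all to kill $h^{2,0}$ and $h^{3,0}$.
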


\begin{proof}
By the Hodge symmetry, we only have to compute $h^{i,0}(X)$ for $1 \leq i \leq 3$ and $h^{1,1}(X)$.
Since $-K_{X}$ is $f$-ample, we have the following by the relative Kawamata-Viehweg vanishing theorem:
\begin{equation}\label{eq:KV}
h^{i}(X, \mc{O}_{X})=h^{i}(C, \mc{O}_{C}) \mathrm{\ for\ } i \geq 0.
\end{equation} 
In particular, we have $h^{2,0}(X)=h^{3,0}(X)=0$.
Since the Picard number of $X$ is two by assumption, we have $h^{1,1}(X)=h^{1,1}(X)+2h^{2,0}(X)=2$. 

On the other hand, by \cite[Proposition 2.1]{van}, we have $H^{5}(X, \Z) \cong H^{5}(D, \Z)$ $=0$.
Hence $H^{1}(X, \Z)=0$ by the Poincare duality and $h^{1,0}(X)=0$ by the Hodge decomposition, which proves the first assertion.
The second assertion follows from (\ref{eq:KV}).
\end{proof}

\section{Proof of Theorem \ref{thm:main0}.}\label{sec:equiv}

This section is devoted to the proof of Theorem \ref{thm:main0}.
First we determine the linear equivalence class of the irreducible components of the boundary divisor.
Then we give the precise statement of Theorem \ref{thm:main0} as in Theorem \ref{thm:main1} and prove it by using Lemma \ref{lem:q-p}, i.e.\ elementary links from quadric fibrations to $\P^{2}$-bundles.

\begin{lem}\label{lem:sub-2}
Let $q \colon Q \to C$ be a quadric fibration, $D_{h}$ a reduced effective divisor on $Q$, and $D_{f}$ a $q$-fiber.
If $U \coloneqq Q \setminus (D_{h} \cup D_{f})$ is an affine homology $3$-cell, then $D_{h}$ is a prime divisor such that $2D_{h} \sim_{C} -K_{Q}$.
\end{lem}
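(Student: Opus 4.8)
The plan is to read off the number and the classes of the boundary components from the vanishing of $\Pic(U)$ together with the triviality of the units of $U$, and then to pin down the class of $D_{h}$ by a determinant computation in $\Pic(Q)$. First I would fix coordinates on the Picard group. By Lemma \ref{lem:sub-1} we have $C \cong \P^{1}$ and $\rho(Q)=2$, so together with $\rho(Q/C)=1$ the sequence $0 \to q^{*}\Pic(C) \to \Pic(Q) \to \Pic(Q/C) \to 0$ shows $\Pic(Q) = \Z H \oplus \Z F$, where $F$ is a $q$-fiber and $H$ is a relatively ample generator of $\Pic(Q/C) \cong \Z$. A general $q$-fiber is a smooth quadric $\F_{0}$, and since $H$ is the relative hyperplane class we have $H|_{\F_{0}} \sim \Sigma_{0}+f_{0}$; the canonical bundle formula then gives $-K_{Q} \sim 2H + cF$ for some $c \in \Z$, i.e.\ $-K_{Q} \sim_{C} 2H$.

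Next I would exploit that $U$ is an affine homology $3$-cell. Since $H_{i}(U,\Z)=0$ for $i>0$, the universal coefficient theorem gives $H^{i}(U,\Z)=0$ for $i>0$; in particular $H^{2}(U,\Z)=0$. Because $U$ is affine, $H^{i}(U,\mathcal{O}_{U})=0$ for $i>0$, so the exponential sequence forces $\Pic(U)=0$ and, comparing $H^{0}(\mathcal{O}_{U})$ with $H^{0}(\mathcal{O}_{U}^{\times})$ through $H^{1}(U,\Z)=0$, it forces $\mathcal{O}(U)^{\times}=\C^{\times}$.

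Then I would analyse the boundary. Writing $D_{h}\cup D_{f}=\bigcup_{i} D_{i}$ as a union of prime divisors, the excision sequence $\bigoplus_{i}\Z[D_{i}] \to \Pic(Q) \to \Pic(U) \to 0$ together with $\Pic(U)=0$ shows that the classes $[D_{i}]$ generate $\Pic(Q)$. Moreover the map $\bigoplus_{i}\Z[D_{i}] \to \Pic(Q)$ is injective: a relation $\sum a_{i}[D_{i}]=0$ produces a rational function $f$ with $\mathrm{div}(f)=\sum a_{i}D_{i}$ supported on the boundary, hence a unit on $U$, hence a constant by the previous step, so $\mathrm{div}(f)=0$ and all $a_{i}=0$. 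Therefore $\bigoplus_{i}\Z[D_{i}] \cong \Pic(Q) \cong \Z^{2}$, so there are exactly two boundary prime divisors and their classes form a $\Z$-basis of $\Pic(Q)$.

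Finally I would identify the two components. Intersecting with a ruling $\ell \sim f_{0}$ of a general fiber, a prime divisor $D$ with $[D]=aH+bF$ satisfies $(D \cdot \ell)=a$, so $a=0$ exactly when $D$ is contained in fibers; in particular every component of $D_{f}$ has $H$-coefficient $0$. Since classes with $H$-coefficient $0$ span only $\Z F$, at least one of the two boundary divisors must be horizontal, and it cannot be a component of $D_{f}$; hence $D_{h}$ is a single horizontal prime divisor and $D_{f}$ is the remaining vertical one, so $D_{h}$ is prime. Writing $[D_{h}]=a_{1}H+b_{1}F$ and $[D_{f}]=b_{2}F$, the basis condition gives $a_{1}b_{2}=\pm 1$, and $a_{1}\geq 1$ because $D_{h}|_{\F_{0}}$ is a nonzero effective divisor; thus $a_{1}=1$ and $2[D_{h}] \equiv 2H \equiv -K_{Q} \pmod{\Z F}$, that is $2D_{h}\sim_{C}-K_{Q}$. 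I expect the main obstacle to be the bookkeeping that guarantees exactly two boundary components: this rests on using $\Pic(U)=0$ for surjectivity and $\mathcal{O}(U)^{\times}=\C^{\times}$ for injectivity at the same time. One must also be careful that a priori $D_{f}$ could be reducible or non-reduced, but the basis condition rules this out automatically and even yields $[D_{f}]=F$, so no separate classification of the singular fibers is needed.
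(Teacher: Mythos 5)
Your proposal is correct and follows essentially the same route as the paper: the paper extracts $\Pic U=0$ and $\mathcal{O}(U)^{\times}=\C^{\times}$ from \cite[Corollary 1.20]{Fuj}, concludes that the boundary components form a $\Z$-basis of $\Pic Q\cong\Z^{2}$ (so $D_{h}$ is prime), and pins down $D_{h}\sim_{C}H_{Q}$ using the class $H_{Q}$ with $2H_{Q}\sim_{C}-K_{Q}$ supplied by \cite[Theorem 3.5]{Mor} and Grothendieck--Lefschetz, exactly as in your determinant argument. The one soft spot is your self-contained substitute for the units statement: the $H^{0}$-part of the exponential sequence together with $H^{1}(U,\Z)=0$ only shows that every unit admits a logarithm, not that it is constant (the correct argument, contained in the cited result of Fujita, is that $H^{1}(U,\Z)=0$ forces the boundary classes to be linearly independent in $H^{2}(Q,\Z)$, so a rational function with divisor supported on the boundary has trivial divisor), but the fact itself is true and the rest of your proof is unaffected.
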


\begin{proof}
By Lemma \ref{lem:sub-1}, it follows that $\Pic_{0}(Q)=0$. 
By \cite[Corollary 1.20]{Fuj}, we have $\Pic U = 0$ and the group of invertible functions on $U$ coincides with non-zero constants $\C^{*}$.
Hence $D_{h}$ is a prime divisor such that $\Pic Q = \Z D_{f} \oplus \Z D_{h}$. By \cite[Theorem 3.5]{Mor} and the Grothendieck-Lefschetz theorem, there exists a divisor $H_{Q}$ on $Q$ such that $\Pic Q = \Z D_{f} \oplus \Z H_{Q}$ and $2H_{Q} \sim_{C} -K_{Q}$. Hence $D_{h} \sim_{C} H_{Q}$, which proves the lemma.
\end{proof}

The following is the precise statement of Theorem \ref{thm:main0}.

\begin{thm}\label{thm:main1}
Let $q \colon Q \ra C$ be a quadric fibration, $D_{h}$ a reduced effective divisor on $Q$, and $D_{f}$ a $q$-fiber.
\begin{enumerate}
\item[\textup{(A)}] Suppose that $D_{h}$ is non-normal.
Then the following are equivalent.
\begin{enumerate}
\item[\textup{(1)}] The complement $Q \setminus (D_{h} \cup D_{f})$ is an affine homology $3$-cell. 
\item[\textup{(2)}] $C \cong \P^{1}$ and $D_{h}$ is a prime divisor such that $2D_{h} \sim_{C} -K_{Q}$.
\item[\textup{(3)}] It holds that $Q \setminus (D_{h} \cup D_{f}) \cong \A^{3}$.
\end{enumerate}

\item[\textup{(B)}] Suppose that $D_{h}$ is normal.
Then the following are equivalent.
\begin{enumerate}
\item[\textup{(1)}] The complement $Q \setminus (D_{h} \cup D_{f})$ is an affine homology $3$-cell. 
\item[\textup{(2)}] $C \cong \P^{1}$ and $D_{h}$ is a prime divisor such that $2D_{h} \sim_{C} -K_{Q}$. Also we have $D_{f} \cong \Q^{2}_{0}$ and $h^{1,2}(Q)=0$. Moreover, each $(q|_{D_{h}})$-fiber is smooth except possibly $D_{f}|_{D_{h}}$.
\item[\textup{(3)}] It holds that $Q \setminus (D_{h} \cup D_{f}) \cong \A^{3}$.
\end{enumerate}
\end{enumerate}
\end{thm}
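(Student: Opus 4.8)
The plan is to establish, in each of the cases (A) and (B), the cycle of implications $(3)\Rightarrow(1)\Rightarrow(2)\Rightarrow(3)$. The implication $(3)\Rightarrow(1)$ is immediate, since $\A^{3}$ is an affine homology $3$-cell. For $(1)\Rightarrow(2)$, Lemma \ref{lem:sub-1} gives $C\cong\P^{1}$ and Lemma \ref{lem:sub-2} gives that $D_{h}$ is a prime divisor with $2D_{h}\sim_{C}-K_{Q}$; this already yields (A)(2), so the remaining work in this direction concerns only the extra assertions of (B)(2).

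To extract (B)(2) I would argue topologically. From the Hodge diamond of Lemma \ref{lem:sub-1} one computes $\eu(Q)=6-2h^{1,2}(Q)$, while counting the singular $q$-fibres (each a $\Q^{2}_{0}$, with general fibre $\F_{0}$) gives $\eu(Q)=8-\delta$, where $\delta$ is their number; hence $\delta=2+2h^{1,2}(Q)\geq 2$. On the other hand, over $\A^{1}=\P^{1}\setminus\{q(D_{f})\}$ the restriction $q|_{U}$ has general fibre a smooth affine quadric (a smooth $(1,1)$-curve removed from $\F_{0}$), of Euler number $2$, and every degenerate fibre has Euler number at most $2$, with equality only when both the quadric fibre and the curve cut out on it by $D_{h}$ are smooth. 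Since $\eu(U)=1$, exactly one degenerate fibre occurs over $\A^{1}$, and it has Euler number $1$. The possibilities of Euler number $1$ are a reducible conic on a smooth $\F_{0}$ or a smooth conic on a singular $\Q^{2}_{0}$; the first would force $\delta\leq 1$, contradicting $\delta\geq 2$, so the degenerate fibre is a singular $\Q^{2}_{0}$ carrying a smooth conic. This forces $\delta=2$ (whence $h^{1,2}(Q)=0$), places the second singular fibre over $q(D_{f})$ so that $D_{f}\cong\Q^{2}_{0}$, and shows that every $(q|_{D_{h}})$-fibre away from $D_{f}|_{D_{h}}$ is a smooth conic, which is exactly (B)(2).

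For $(2)\Rightarrow(3)$ I would pass through an elementary link of Lemma \ref{lem:q-p} to a $\P^{2}$-bundle $p\colon P\to\P^{1}$, exploiting the easy fact (the $\P^{2}$-bundle case of \S\ref{sec:mainA}) that the complement of a relative hyperplane and one fibre in $P$ is isomorphic to $\A^{3}$, since $\mc{E}|_{\A^{1}}$ is trivial. In case (A), Lemma \ref{lem:nnorm} supplies the $q$-section $\Sing D_{h}$ and identifies $D_{h}$ with $(E_{\psi})_{Q}$, so the link centred along $\Sing D_{h}$ contracts the strict transform of $D_{h}$ and produces the relative hyperplane $\psi_{*}E_{\vp}$; as $\Sing D_{h}\subset D_{h}$ is removed, the link restricts to an isomorphism of $U$ onto this $\A^{3}$. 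In case (B) the conic bundle $q|_{D_{h}}\colon D_{h}\to\P^{1}$ admits a section $s$ by Tsen's theorem, which I may take inside $D_{h}$; applying Lemma \ref{lem:q-p} with centre $s$, the blow-down $\psi$ has centre a $p$-bisection $B$, and by Lemma \ref{lem:fiber} together with (B)(2) the bisection $B$ meets $H_{P}=(D_{h})_{P}$ only over $q(D_{f})$. Consequently the link exhibits $U$ as an affine modification (in the sense of \cite{K-Z}) of $P\setminus(H_{P}\cup p^{-1}(\infty))\cong\A^{3}$ with centre supported over $q(D_{f})$, from which one concludes $U\cong\A^{3}$.

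The main obstacle is this last step in case (B). Because the link contracts the surface swept out by the nodal conics through $s$, it is \emph{not} an isomorphism on $U$, so $U$ is genuinely an affine modification of the $\P^{2}$-bundle model $\A^{3}$ rather than isomorphic to it on the nose; proving that this modification is again $\A^{3}$—and not merely an affine threefold sharing its Euler number—is the heart of the matter, and it is here that Lemma \ref{lem:fiber} and the smoothness encoded in (B)(2), which confine the centre of the modification to the fibre over $q(D_{f})$, do the decisive work. A lesser difficulty lies in the Euler-number bookkeeping of $(1)\Rightarrow(2)$, where one must verify that every degenerate fibre of $q|_{U}$ contributes non-positively and then isolate the two degenerations of Euler number $1$.
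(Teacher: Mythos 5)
Your architecture matches the paper's — $(3)\Rightarrow(1)$ trivially, $(1)\Rightarrow(2)$ via Lemmas \ref{lem:sub-1} and \ref{lem:sub-2} plus a topological count, and $(2)\Rightarrow(3)$ via the elementary link of Lemma \ref{lem:q-p} to a $\P^{2}$-bundle — but there are two genuine gaps, both in case (B). The first is in $(1)\Rightarrow(2)$: your list of degenerate fibres of $q|_{U}$ over $\A^{1}$ with Euler number $1$ is incomplete. Besides a reducible conic on a smooth $\F_{0}$ and a smooth conic on $\Q^{2}_{0}$, there is the case of a \emph{non-reduced} member of $|\mc{O}_{\Q^{2}_{0}}(1)|$ (a double ruling), whose support is a single $\P^{1}$, so that $\eu(\Q^{2}_{0}\setminus\Supp(D_{h}|_{F}))=3-2=1$ as well; note that normality of $D_{h}$ does not exclude tangency of $D_{h}$ with a fibre along a ruling (compare Theorem \ref{thm:main2}(3)). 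This case satisfies all your numerical constraints ($\delta=2$, $h^{1,2}(Q)=0$, $D_{f}\cong\Q^{2}_{0}$) yet violates the smoothness clause of (B)(2), so Euler-number bookkeeping alone cannot finish the argument. The paper excludes it non-topologically: writing $U$ as the affine modification of $U'=P\setminus((D_{h})_{P}\cup(D_{f})_{P})\cong\A^{3}$ along $(B\cap U'\subset E\cap U')$, the map $H_{1}(B\cap U',\Z)\to H_{1}(E\cap U',\Z)$ must be an isomorphism by \cite[Theorem 3.1]{K-Z}, whereas a non-reduced fibre over a point of $\A^{1}$ would make $B\cap U'\cong\C^{*}$ an unramified $2$-section of $E\cap U'\cong\A^{1}\times\C^{*}$, giving multiplication by $2$ on $H_{1}\cong\Z$, a contradiction. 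Some such argument is indispensable.

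The second gap is in $(2)\Rightarrow(3)$ for normal $D_{h}$: you correctly identify that $U$ is an affine modification of $U'\cong\A^{3}$ and that proving the modification is again $\A^{3}$ is ``the heart of the matter,'' but you do not supply the proof. (Also, the centre of the modification is $B\cap U'$, which lies over $\A^{1}$, not over $q(D_{f})$; it is $B\cap(D_{h})_{P}$ that is confined to the fibre at infinity.) What (B)(2) and Lemma \ref{lem:fiber} buy you is that $E\cap U'\cong\A^{2}$ and $B\cap U'\cong\A^{1}$; to conclude one must in addition \emph{rectify} the flag, i.e.\ produce coordinates $(x,y,z)$ on $U'$ with $E\cap U'=\{x=0\}$ and $B\cap U'=\{x=y=0\}$. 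This is the Abhyankar--Moh/Bhatwadekar--Dutta theorem \cite[Theorem B]{B-D}, and it is the decisive input: an $\A^{1}$ inside an $\A^{2}$ inside $\A^{3}$ is not a coordinate flag by fiat, and only after rectification is the affine modification visibly isomorphic to $\A^{3}$. Without this step the conclusion does not follow.
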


\begin{proof}
\noindent(A):
Since $(3) \Rightarrow (1)$ is trivial and $(1) \Rightarrow (2)$ follows from Lemma \ref{lem:sub-1} and Lemma \ref{lem:sub-2}, we only have to show $(2)\Rightarrow (3)$.

Suppose that (2) holds. 
Let $s \coloneqq \Sing D_{h}$, which is a $q$-section by Lemma \ref{lem:nnorm}.
Construct $\vp, \psi, p, \wt Q$ and $P$ as in Lemma \ref{lem:q-p}.
Then we have $(E_{\psi})_{Q}=D_{h}$ by Lemma \ref{lem:nnorm} (2). 
Therefore we have:
\begin{equation}\label{eq:main1'}
Q \setminus (D_{h} \cup D_{f}) \cong \wt Q \setminus ((D_{h})_{\wt Q} \cup (D_{f})_{\wt Q} \cup E_\vp) \cong P \setminus ((D_{f})_{P} \cup (E_\vp)_{P}).
\end{equation}
Since $(E_\vp)_{P}$ is a sub $\P^{1}$-bundle by Lemma \ref{lem:q-p} (1) and $(D_{f})_{P}$ is a $p$-fiber, 
we have $Q \setminus (D_{h} \cup D_{f}) \cong \A^{3}$ by \cite[Lemma 5.15]{Kis}.

\noindent(B):
Since $(3) \Rightarrow (1)$ is trivial, we only have to show $(1) \Rightarrow (2) \Rightarrow (3)$.
Let $U \coloneqq Q \setminus (D_{h} \cup D_{f})$. 
We note that (1) implies $C \cong \P^{1}$ by Lemma \ref{lem:sub-1}. 
Hence we may assume that $C \cong \P^{1}$ throughout the proof.
Let $\infty \coloneqq q(D_{f})$ and regard $C \setminus \{\infty\}$ as $\A^{1}$.

\noindent$(1) \Rightarrow (2)$: 
Suppose that (1) holds. Then the second assertion follows from Lemma \ref{lem:sub-1}.
Since $U$ is an affine homology $3$-cell, we have:
\begin{eqnarray}\label{eq:main1-1}
\eu(Q)&=&\eu(U)+\eu(D_{h} \setminus (D_{f}|_{D_{h}}))+\eu(D_{f})\\
          &=&1+\eu(D_{h} \setminus (D_{f}|_{D_{h}}))+\eu(D_{f}). \nonumber
\end{eqnarray}

Let $\sigma \coloneqq \{ t \in \A^{1} \mid (q|_{D_{h}})^{*}(t) \mbox{ is reducible}\}$.
For $t \in \A^{1}$, the divisor $(q|_{D_{h}})^{*}(t)$ is a member of either $|\Sigma_{0}+f_{0}|$ in $\F_{0}$ or $|\mc{O}_{\Q^{2}_{0}}(1)|$ in $\Q^{2}_{0}$.
In particular, we have:
\begin{eqnarray*}
t \not \in \sigma &\iff& \Supp \ (q|_{D_{h}})^{*}(t) \cong \P^{1}\\
                          &\iff& \eu((q|_{D_{h}})^{*}(t))=2.\\
 t \in \sigma       &\iff& q^{*}(t) \cong \F_{0} \mbox{ and } (q|_{D_{h}})^{*}(t) \mbox{ is reducible}  \\
                          &\iff& \eu((q|_{D_{h}})^{*}(t))=3.
\end{eqnarray*}
Hence we have:
\begin{equation}\label{eq:main1-2}
\eu(D_{h} \setminus (D_{f}|_{D_{h}}))=2\eu(\A^{1} \setminus \sigma)+3\eu(\sigma)=2+\sharp \sigma.
\end{equation}
Also by Lemma \ref{lem:sub-1}, we have:
\begin{equation}\label{eq:main1-3}
\eu(Q) = 6-2h^{1,2}(Q).
\end{equation}
Combining (\ref{eq:main1-1})--(\ref{eq:main1-3}), we have:
\begin{equation}\label{eq:main1-4}
6-2h^{1,2}(Q) \geq 3+\sharp \sigma +\eu(D_{f}).
\end{equation}
we note that $\eu(D_{f})=3$ when $D_{f} \cong \Q^{2}_{0}$ and $\eu(D_{f})=4$ when $D_{f} \cong \F_{0}$. Hence (\ref{eq:main1-4}) implies that $h^{1,2}(Q)=0$, $\sigma=\emptyset$ and $D_{f} \cong \Q^{2}_{0}$. 
In particular, we get the third and fourth assertion of (2).

It remains to prove the last assertion.
Take a $q$-section $s \subset D_{h}$ and construct $\vp, \psi, p, \wt Q, P$ and $B$ and as in Lemma \ref{lem:q-p}.
By (\ref{diag:q-p}), we have: 
\begin{equation}\label{eq:main1-5}
\eu(Q)=6-2p_{a}(B).
\end{equation}
Combining (\ref{eq:main1-3}) and (\ref{eq:main1-5}), we have $p_{a}(B)=h^{1,2}(Q)=0$.
In particular, the branch locus of $p|_{B}$ consists of two points.
By Lemma \ref{lem:q-p} (2), there is exactly two singular $q$-fibers. 
Since $q^{*}(\infty)=D_{f} \cong \Q^{2}_{0}$, we may assume that $q^{*}(0)$ is the other singular fiber.
By Lemma \ref{lem:fiber} and the fact that $\sigma = \emptyset$, each $(q|_{D_{h}})$-fiber is smooth except possibly $D_{f}|_{D_{h}}$ and $(q|_{D_{h}})^{*}(0)$.
Hence we only have to show that $(q|_{D_{h}})^{*}(0)$ is smooth.

Conversely, suppose that $(q|_{D_{h}})^{*}(0)$ is not smooth. 
Let $E \coloneqq \psi_{*}(E_\vp)$ and $U' \coloneqq P \setminus ((D_{h})_{P} \cup(D_{f})_{P}) \cong \A^{3}$.
Since $U \cong \wt Q \setminus ((D_{h})_{\wt Q} \cup(D_{f})_{\wt Q} \cup E_\vp)$, we can regard $U$ as the affine modification of $U'$ with the locus $(B \cap U' \subset E \cap U')$ (see \cite{K-Z} for the definition).
By \cite[Theorem 3.1]{K-Z}, the morphism between homologies $\tau \colon H_{1}(B \cap U', \Z) \ra H_{1}(E \cap U', \Z)$ induced by the inclusion $B \cap U' \hookrightarrow E \cap U'$ is an isomorphism of $\Z$-modules.

On the other hand, $(q|_{D_{h}})^{*}(0)$ is non-reduced because $\sigma = \emptyset$.
Lemma \ref{lem:fiber} now shows that $E \cap U' \cong \A^{1} \times \C^{*}$ and $B \cap U' \cong \C^{*}$, which is an unramified 2-section of the second projection of $E \cap U'$. Hence  $H_{1}(B \cap U', \Z) \cong H_{1}(E \cap U', \Z) \cong \Z$, but $\tau= 2 \times \mathrm{id}_{\Z}$, a contradiction.

\noindent$(2) \Rightarrow (3)$: 
Suppose that (2) holds.
Let $E \coloneqq \psi_{*}(E_\vp)$ and $U' \coloneqq P \setminus ((D_{h})_{P} \cup(D_{f})_{P}) \cong \A^{3}$. 
Then we can regard $U$ as the affine modification of $U'$ with the locus $(B \cap U' \subset E \cap U')$.
By Lemma \ref{lem:fiber}, we have $E \cap U' \cong \A^{2}$ and $B \cap U' \cong \A^{1}$.
By the Abhyanker-Mor theorem over Noetherian rings containing $\Q$ \cite[Theorem B]{B-D}, 
there is a coordinate $\{x, y, z\}$ of $U'=\A^{3}$ such that $E \cap U'=\{x=0\}$ and $B \cap U'=\{x=y=0\}$.
Hence $U$ is isomorphic to the affine modification of $\A^{3}_{[x, y, z]}$ with the locus $(\{x=y=0\} \subset \{x=0\})$, which is isomorphic to $\A^{3}$ as desired.
\end{proof}

\section{Examples}\label{sec:ex}

This section provides several examples of compactifications of affine homology $3$-cells compatible with quadric fibrations.
For the construction, we often use Theorem \ref{thm:main1}. 
Throughout this section, $(Q, D_{h}, D_{f})$ stands for a compactification of $\A^{3}$ compatible with a quadric fibration $q \colon Q \to \P^{1}$.
We note that $K_{Q}+D_{h}+D_{f}$ is not nef since $(K_{Q}+D_{h}+D_{f} \cdot l)=-1$ for each ruling $l$ of a $q$-fiber. 

First suppose that $Q$ is a Fano $3$-fold and $D_{h}+D_{f}$ is ample.
Let us mention that then in \cite[Lemma 5.9]{Kis}, $D_{h}$ is erroneously claimed to be normal.
In Example \ref{ex:2}, we construct examples with non-normal $D_{h}$.


\begin{ex}\label{ex:2}
Let $q \colon Q \to \P^1$ be a Fano quadric fibration, i.e.\,either No.\,18, No.\,25 or No.\,29 in \cite[Table 2]{M-M}. 
Let $D_{f}$ be a $q$-fiber.
By \cite[Theorem 4.2]{Man}, we can take a $q$-section $s$. 
By Lemma \ref{lem:q-p} (1), there is a prime divisor $D_{h}$ on $Q$ such that $2D_{h} \sim_{\P^{1}} -K_{Q}$ and $\Sing D_{h}=s$.
Theorem \ref{thm:main1} (A) now shows that $(Q, D_{h}, D_{f})$ is a compactification of $\A^{3}$ compatible with \nolinebreak$q$.

Assume that $D_{h}+D_{f}$ is not ample. 
Then by \cite[Lemma 2.2]{Kis} there is a birational extremal contraction $\vp$ of $Q$ such that $E_{\vp}=D_{h}$ or $D_{f}$. 
Since $D_{f}$ is a $q$-fiber, we have $E_{\vp}=D_{h}$, which is impossible since $D_{h}$ is non-normal and $E_{\vp}$ is normal by \cite[Theorem 3.3]{Mor}. 
Hence $D_{h}+D_{f}$ is ample.
\end{ex}

Secondly, suppose that $D_{h}$ is normal and $Q$ is No.\,29 in \cite[Table 2]{M-M}, i.e.\,the blow-up of $\Q^{3}$ along a smooth conic.
Let us mention that then in \cite[Lemma 5.13]{Kis}, $D_{h}+D_{f}$ erroneously claimed to be not ample.
In Example \ref{ex:3}, we construct an example with $D_{h}+D_{f}$ ample.

\begin{ex}\label{ex:3}
Take $H, S$ and $C$ in $\Q^{3} = \{X_{0}X_{1}+X_{2}^{2}+X_{3}X_{4}=0\} \subset \P^{4}_{[X_{0}: \cdots:X_{4}]}$ as follows:
\begin{eqnarray}
H &\coloneqq&\{X_{0}X_{1}+X_{2}^{2}+X_{3}X_{4}=X_{0}=0\},\\
S &\coloneqq&\{X_{0}X_{1}+X_{2}^{2}+X_{3}X_{4}=X_{1}X_{3}+X_{0}^{2}=0\},\\
C       &\coloneqq&\{X_{0}X_{1}+X_{2}^{2}+X_{3}X_{4}=X_{0}=X_{1}=0\}.
\end{eqnarray}

Let $P \coloneqq \{X_{0}Y_{1}=X_{1}Y_{0}\} \subset \P^{4}_{[X_{0}: \cdots:X_{4}]} \times \P^{1}_{[Y_{0}:Y_{1}]}$, and $\Phi \colon P \ra \P^{4}$ be the blow-up along $\{X_{0}=X_{1}=0\}$.
Set $Q$, $D_{f}$ and $D_{h}$ as the strict transformations of $\Q^{3}$, $H$ and $S$ in $P$ respectively. 
Then $\Phi|_{Q} \colon Q \ra \Q^{3}$ is the blow-up along $C$, 
and the second projection of $\P^{4} \times \P^{1}$ induces a quadric fibration $q \colon Q \ra \P^{1}_{[Y_{0}:Y_{1}]}$.
The defining equations of $Q$, $D_{f}$ and $D_{h}$ in $P$ are as follows:
\begin{eqnarray}
Q         &=&\{X_{0}X_{1}+X_{2}^{2}+X_{3}X_{4}=0\},\\
D_{f}    &=&\{X_{0}X_{1}+X_{2}^{2}+X_{3}X_{4}=Y_{0}=0\},\\
D_{h} &=&\{X_{0}X_{1}+X_{2}^{2}+X_{3}X_{4}=Y_{1}X_{3}+Y_{0}X_{0}=0\}.
\end{eqnarray}
Then $D_{f}$ is a singular $q$-fiber and $D_{h}$ has only one DuVal singularity of type $D_{4}$. 
Also $D_{h}$ is a prime divisor with $2D_{h} \sim_{\P^{1}}-K_{Q}$. 
Since $C \cong \P^{1}$, we have $h^{1,2}(Q)=0$.
Also we have:
\begin{eqnarray*}
D_{h} \setminus (D_{f}|_{D_{h}}) 
&\cong& 
\left\{
\begin{array}{l}
X_{0}X_{1}+X_{2}^{2}+X_{3}X_{4}=0,\\   
X_{1}=X_{0}Y_{1}, Y_{1}X_{3}+X_{0}=0
\end{array}
\right\}
\mathrm{\ \  in\ \  } \P^{4}_{[X_{0}:\cdots:X_{4}]} \times \A^{1}_{(Y_{1})}\\
&\cong& \{Y_{1}^{3}X_{3}^{2}+X_{2}^{2}+X_{3}X_{4}=0\} \mathrm{\ \  in\ \  } \P^{2}_{[X_{2}:X_{3}:X_{4}]} \times \A^{1}_{(Y_{1})}. 
\end{eqnarray*}
Hence each $(q|_{D_{h}})$-fiber is smooth except $D_{f}|_{D_{h}}$.

Theorem \ref{thm:main1} (B) now shows that $(Q, D_{h}, D_{f})$ is a compactification of $\A^{3}$ compatible with $q$. 
Since both $D_{h}$ and $D_{f}$ differ from $E_{\Phi|_{Q}}$, the ampleness of $D_{h}+D_{f}$ follows from \cite[Lemma 2.2]{Kis}.
\end{ex}

Thirdly, suppose that $Q$ is an arbitrary quadric fibration and $D_{f}|_{D_{h}}$ is smooth.
Then $D_{h}$ is normal by Lemma \ref{lem:nnorm}.
In fact, it holds that $D_{h} \cong \F_{d}$ for some $d \in \Z_{\geq 0}$ by Theorem \ref{thm:main1} (B).
Let us mention that in \cite[\S 4.4, Lemma 2]{Mul}, it is erroneously claimed that $d=0$.
In Example \ref{ex:4}, we construct an example with $D_{h} \cong \F_{d}$ for each $d \in \Z_{\geq 0}$.

\begin{ex}\label{ex:4}
Let $d \in \Z_{\geq 0}$ and $P \coloneqq \F(0,1,d)$ with the $\P^{2}$-bundle structure $p \colon P \to \P^{1}$.
For $i=1, d$, let $S_{i}$ be the sub $\P^{1}$-bundle of $P$ associated with the projection $\mc{O}_{\P^{1}} \oplus \mc{O}_{\P^{1}} (1) \oplus \mc{O}_{\P^{1}} (d) \to \mc{O}_{\P^{1}} \oplus \mc{O}_{\P^{1}} (i)$ and $F$ a $p$-fiber.
Then it holds that $S_{i} \cong \F_{i}$ and $S_{i} \sim \xi_{P}-(d+1-i)F,$. Also we have:
\begin{eqnarray}
(S_{i}|_{S_{(d+1-i)}})^{2} &=& (\xi_{P}-(d+1-i)F)^{2} \cdot (\xi_{P}-iF)\\
                             &=& \xi_{P}^{3}-(2d+2-i)\xi_{P}^{2} \cdot F=-(d+1-i).\nonumber
\end{eqnarray}
Hence $S_{d}|_{S_{1}}=\Sigma_{1}$ and $S_{1}|_{S_{d}}=\Sigma_{d}$.

Now take $B \subset S_{1}$ as a smooth member of $|2(\Sigma_{1}+f_{1})|$, which is a $p$-bisection. 
Let $\psi \colon \wt P \to P$ be the blow-up along $B$.
Then $-K_{\wt P}$ is $(p \circ \psi)$-ample.
An easy computation shows that there is the elementary link with center along $B$:
\begin{equation}
\xymatrix@!C=15pt@R=15pt{
&\wt{P} \ar[dl]_-{\psi} \ar[dr]^-{\vp}
&
\\P \ar[d]_-{p}
&
&Q \ar[d]^-{q}
\\\P^{1} \ar@{=}[rr]
& 
&\P^{1}
}
\end{equation}
such that $\vp$ is the blow-up of a quadric fibration $Q$ along a $q$-section.
In fact, this is the inverse of an elementary link as in Lemma \ref{lem:q-p}. 
Since $B \cong \P^{1}$, we have $h^{1,2}(Q)=0$ by (\ref{eq:main1-3}) and (\ref{eq:main1-5}).

Let $D_{h} \coloneqq (S_{d})_{Q}$ and $D_{f}$ a singular $q$-fiber, which exists by Lemma \ref{lem:q-p} (2).
Then $2D_{h} \sim_{\P^{1}} -K_{Q}$ by Lemma \ref{lem:q-p} (1).
Since $B \cap S_{d}=\emptyset$ and $E_{\vp}=(S_{1})_{\wt P}$, it holds that $D_{h} \cong S_{d} \cong \F_{d}$. 
Theorem \ref{thm:main1} (B) now shows that $(Q, D_{h}, D_{f})$ is a compactification of $\A^{3}$ compatible with $q$. 
\end{ex}

Finally, we give an example of compactifications of affine homology $3$-cells into quadric fibrations which gives a negative answer to Problem \ref{prob:main} (1) without the assumption on the compatibility.

\begin{ex}\label{ex:5}
Take $H, S$ and $C$ in $\Q^{3} = \{X_{0}X_{1}+X_{2}^{2}+X_{3}X_{4}=0\} \subset \P^{4}_{[X_{0}: \cdots:X_{4}]}$ as follows:
\begin{eqnarray}
H &\coloneqq&\{X_{0}X_{1}+X_{2}^{2}+X_{3}X_{4}=X_{0}=0\},\\
S &\coloneqq&\{X_{0}X_{1}+X_{2}^{2}+X_{3}X_{4}=0, X_{0}X_{3}^2=X_{4}^{3}\},\\
C &\coloneqq&\{X_{0}X_{1}+X_{2}^{2}+X_{3}X_{4}=0, X_{3}=X_{4}=X_{0}\}.
\end{eqnarray}
As in Example \ref{ex:3}, the blow-up $Q \coloneqq \Bl_{C} \Q^{3}$ has a quadric fibration structure $q \colon Q \to \P^{1}$.
Since each $q$-fiber is the strict transform of a hyperplane section of $\Q^{3}$ containing $C$, both $H_{Q}$ and $S_{Q}$ are not $q$-fibers.

Now set $U \coloneqq Q \setminus (H_{Q} \cup S_{Q})$, $\Q^{0} \coloneqq \Q^{3} \setminus H$, $S^{0} \coloneqq S \setminus (S \cap H)$ and $C^{0} \coloneqq C \setminus (C \cap  H)$.
Then $U$ is the affine modification of $\Q^{0}$ with the locus $(C^{0} \subset S^{0})$.
In $\P^{4} \setminus H \cong \A^{4}_{[x_{1}, \ldots, x_{4}]}$, we have an isomorphism $\Q^{0} \cong \{x_{1}+x_{2}^{2}+x_{3}x_{4}=0\} \cong \A^{1}_{[x_{2}]} \times \A^{2}_{[x_{3}, x_{4}]}$. 
This isomorphism sends $S^{0}$ and $C^{0}$ to 
$\A^{1}_{[x_{2}]} \times \{x_{3}^2=x_{4}^{3}\}$ and $\A^{1}_{[x_{2}]} \times \{(1,1)\}$ respectively.
By \cite{tDP}, $U$ is isomorphic to $\A^{1}_{[x_{2}]} \times V(3,2)$, where $V(3,2)=\{z^{2}x_{4}^{3}+3zx_{4}^{2}+3x_{4}-zx_{3}^{2}-2x_{3}=1\} \subset \A^{3}_{[x_{3},x_{4},z]}$ is an affine homology $2$-cell of logarithmic Kodaira dimension one.
Hence $(Q, H_{Q} \cup S_{Q})$ is a compactification of an affine homology $3$-cell $\A^{1} \times V(3,2)$.
We note that $\A^{1} \times V(3,2) \not \cong \A^{3}$ by \cite[Theorem 1]{I-F}.
\end{ex}

\section{Compactifications of affine homology $3$-cells compatible with $\P^{2}$-bundles}\label{sec:mainA}

In this section, we will give a solution of Problem \ref{prob:main} for compactifications compatible with $\P^{2}$-bundles.
Theorem \ref{thm:maindiag-A} follows as a corollary.


First we give the solution of Problem \ref{prob:main} (1) for such compactifications.

\begin{lem}
Let $p \colon P \ra C$ be a $\P^{2}$-bundle, $D_{h}$ a reduced effective divisor on $P$, and $D_{f}$ a $p$-fiber.
Then the following are equivalent.
\begin{enumerate}
\item[\textup{(1)}] The complement $P \setminus (D_{h} \cup D_{f})$ is an affine homology $3$-cell. 
\item[\textup{(2)}] $C \cong \P^{1}$ and $D_{h}$ is a sub $\P^{1}$-bundle.
\item[\textup{(3)}] It holds that $P \setminus (D_{h} \cup D_{f}) \cong \A^{3}$.
\end{enumerate}
\end{lem}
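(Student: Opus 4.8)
The plan is to prove the three implications $(3)\Rightarrow(1)\Rightarrow(2)\Rightarrow(3)$, exploiting the fact that a $\P^2$-bundle is the degree-$9$ analogue of the quadric-fibration situation already handled in Theorem \ref{thm:main1}, but with everything made simpler by the absence of singular fibers. The implication $(3)\Rightarrow(1)$ is immediate, since $\A^3$ is an affine homology $3$-cell. For $(1)\Rightarrow(2)$, I would first invoke Lemma \ref{lem:sub-1}: since $p\colon P\to C$ is a del Pezzo fibration (of degree $9$), the hypothesis that $P\setminus(D_h\cup D_f)$ is an affine homology $3$-cell forces $C\cong\P^1$ and $\operatorname{Pic}_0(P)=0$. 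Then, exactly as in the proof of Lemma \ref{lem:sub-2}, I would apply \cite[Corollary 1.20]{Fuj} to conclude that $\operatorname{Pic}U=0$ and that the only invertible functions on $U$ are the constants $\C^*$; this shows $D_h$ is a prime divisor and that $\operatorname{Pic}P=\Z D_f\oplus\Z D_h$.

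The key step in $(1)\Rightarrow(2)$ is to pin down the linear equivalence class of $D_h$. Writing $\xi_P$ for the tautological divisor and using $-K_P\sim 3\xi_P-(\deg\mc{E}-2)F$, the Grothendieck--Lefschetz theorem (as in Lemma \ref{lem:sub-2}) gives a divisor $H_P$ with $\operatorname{Pic}P=\Z F\oplus\Z H_P$ and $3H_P\sim_C -K_P$, so $D_h\sim_C H_P$, i.e.\ $D_h\sim \xi_P+aF$ for some $a\in\Z$. The crucial point is then to show $D_h$ is in fact a \emph{sub $\P^1$-bundle}, i.e.\ that each fiber $D_h\cap p^{-1}(t)$ is a line in $p^{-1}(t)\cong\P^2$. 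Since $D_h\sim_C H_P$ restricts to $\mc{O}_{\P^2}(1)$ on every fiber, each fiber of $q|_{D_h}:=p|_{D_h}$ is a conic-degree-one divisor, hence a line, provided $D_h$ meets each fiber properly; the properness is guaranteed because $D_h$ is not a fiber (it is a horizontal divisor, as $D_f$ already accounts for the vertical part of the boundary). An Euler-characteristic count analogous to \eqref{eq:main1-1}--\eqref{eq:main1-4}, using that $P^2$-bundles have no singular fibers and $\eu(\P^2)=3$, then rules out any reducible or degenerate restriction and confirms that $D_h\to C$ is a $\P^1$-bundle, so $D_h$ is a smooth sub $\P^1$-bundle.

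For $(2)\Rightarrow(3)$, suppose $C\cong\P^1$ and $D_h$ is a sub $\P^1$-bundle. Regard $p(D_f)$ as $\infty\in\P^1$ and identify $C\setminus\{\infty\}\cong\A^1$. Then $U=P\setminus(D_h\cup D_f)$ fibers over $\A^1$ with every fiber the complement in $\P^2$ of a line, that is, an $\A^2$; moreover $p|_U\colon U\to\A^1$ is a Zariski-locally trivial $\A^2$-bundle over $\A^1$ because $D_h$ is a sub $\P^1$-bundle (so the complementary bundle is again a vector-bundle complement). I would then apply \cite[Lemma 5.15]{Kis}, precisely as in the $(2)\Rightarrow(3)$ step of Theorem \ref{thm:main1}(A), to conclude $U\cong\A^3$. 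This is in fact the ``easy'' case referred to in the introduction (``the problem is easy by the same reason as when $n=2$''), since a general fiber of $p|_U$ is already $\A^2$, unlike the quadric-fibration case.

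The main obstacle will be the $(1)\Rightarrow(2)$ step, specifically ensuring that $D_h$ restricts to a single reduced line on \emph{every} fiber rather than possibly degenerating on special fibers: a priori $D_h\cap p^{-1}(t)$ could be a non-reduced or reducible member of $|\mc{O}_{\P^2}(1)|$—but of course every effective divisor in $|\mc{O}_{\P^2}(1)|$ is a line, so the only real subtlety is excluding that $D_h$ contains a whole fiber or is singular along a fiber. This is resolved by the topological/Euler-characteristic bookkeeping together with the primality of $D_h$ established above, and it is considerably lighter than the corresponding analysis for quadric fibrations because there are no singular fibers and no affine-modification argument (\cite{K-Z}) is needed. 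Everything else follows formally from the results already proved in \S\ref{sec:top} and \S\ref{sec:equiv}.
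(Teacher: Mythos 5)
Your proposal is correct and follows essentially the same route as the paper: $(3)\Rightarrow(1)$ trivial, $(2)\Rightarrow(3)$ via \cite[Lemma 5.15]{Kis}, and $(1)\Rightarrow(2)$ by combining Lemma \ref{lem:sub-1} with the Picard-group argument of Lemma \ref{lem:sub-2} to get $D_h\sim_C\xi_P$ and hence a line on each fiber. The extra Euler-characteristic bookkeeping you propose is unnecessary, since (as you yourself note) every effective member of $|\mc{O}_{\P^2}(1)|$ is already a reduced line, which is exactly why the paper concludes directly that $D_h$ is a sub $\P^1$-bundle.
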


\begin{proof}
Since $(3) \Rightarrow (1)$ is trivial and $(2) \Rightarrow (3)$ follows from \cite[Lemma 5.15]{Kis}, we only have to show $(1)\Rightarrow (2)$.

Suppose that (1) holds. The first assertion follows from Lemma \ref{lem:sub-1}.
By the same argument as in the proof of Lemma \ref{lem:sub-2}, $D_{h}$ is a prime divisor such that $\Pic P = \Z D_{f} \oplus \Z D_{h}$. 
On the other hand, we have $\Pic P = \Z D_{f} \oplus \Z \xi_{P}$.
Hence $D_{h} \sim_{C} \xi_{P}$, which implies that $D_{h}$ is a sub $\P^{1}$-bundle of $P$.
\end{proof}

Next we characterize $(P', D_{h,2}, D_{f,2})$ as in Example \ref{ex:0-A}.

\begin{lem}\label{lem:001}
Let $p \colon P \ra \P^{1}$ be a $\P^{2}$-bundle with associated vector bundle $\mc{E}$.
Let $F$ be a $p$-fiber and $D \subset P$ a sub $\P^{1}$-bundle.
Suppose that $\deg \mc{E}=3n+1$, $D \cong \F_{0}$ and $D \sim \xi_{P}-(n+1)F$ for some $n \in \Z$.
Then we have $\mc{E} \cong \mc{O}_{\P^{1}}(n) \oplus \mc{O}_{\P^{1}}(n) \oplus \mc{O}_{\P^{1}}(n+1)$, 
and $D$ is the exceptional divisor of the blow-up $f \colon P \cong \F(0,0,1) \ra \P^{3}$ along a line. 
\end{lem}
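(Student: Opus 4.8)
The plan is to first pin down $\mathcal{E}$ from the numerical data, and then realize $P$ as a blow-up of $\P^{3}$ through the tautological linear system. Since $\mathcal{E}$ is a rank-$3$ bundle on $\P^{1}$, Grothendieck's theorem splits it into line bundles. In the Grothendieck convention used here, a sub-$\P^{1}$-bundle $D \subset P = \P(\mathcal{E})$ is $\P(\mathcal{Q})$ for a rank-$2$ quotient $\mathcal{E} \twoheadrightarrow \mathcal{Q}$, and since $D$ is a $\P^{1}$-bundle over all of $\P^{1}$ the kernel is a line subbundle $\mathcal{L} \cong \mc{O}_{\P^{1}}(b) \hookrightarrow \mathcal{E}$. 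The composite $p^{*}\mathcal{L} \to p^{*}\mathcal{E} \twoheadrightarrow \mc{O}_{P}(1)$ is a section of $\mc{O}_{P}(1) \otimes p^{*}\mathcal{L}^{\vee}$ cutting out $\P(\mathcal{Q})$, so $D = \P(\mathcal{Q}) \sim \xi_{P} - bF$. Comparing with the hypothesis $D \sim \xi_{P} - (n+1)F$ gives $b = n+1$, hence $\mathcal{L} \cong \mc{O}_{\P^{1}}(n+1)$.

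Next I would determine $\mathcal{Q}$. The hypothesis $D \cong \F_{0}$ forces $\mathcal{Q}$ to be balanced, and from $\deg \mathcal{Q} = \deg \mathcal{E} - \deg \mathcal{L} = (3n+1)-(n+1) = 2n$ we get $\mathcal{Q} \cong \mc{O}_{\P^{1}}(n) \oplus \mc{O}_{\P^{1}}(n)$. Thus there is an extension $0 \to \mc{O}(n+1) \to \mathcal{E} \to \mc{O}(n)^{\oplus 2} \to 0$, and since $\operatorname{Ext}^{1}(\mc{O}(n)^{\oplus 2}, \mc{O}(n+1)) = H^{1}(\P^{1}, \mc{O}(1))^{\oplus 2} = 0$, it splits, yielding $\mathcal{E} \cong \mc{O}_{\P^{1}}(n) \oplus \mc{O}_{\P^{1}}(n) \oplus \mc{O}_{\P^{1}}(n+1)$, the first assertion. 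Twisting by $\mc{O}(-n)$, which leaves $\P(\mathcal{E})$ unchanged, identifies $P \cong \P(\mc{O} \oplus \mc{O} \oplus \mc{O}(1)) = \F(0,0,1)$; under this normalization $\xi_{P}$ becomes $\xi_{\P(\mc{O} \oplus \mc{O} \oplus \mc{O}(1))}$ and $D$ becomes the sub-bundle $\P(\mc{O} \oplus \mc{O}) \sim \xi_{P} - F$.

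To finish, I would construct $f$ from the tautological system. As $\mc{O} \oplus \mc{O} \oplus \mc{O}(1)$ is globally generated with $h^{0} = 4$, the divisor $\xi_{P}$ is base-point-free and defines $f \colon P \to \P^{3}$ with $f^{*}\mc{O}_{\P^{3}}(1) = \xi_{P}$; since $\xi_{P}^{3} = \deg(\mc{O} \oplus \mc{O} \oplus \mc{O}(1)) = 1$, the map $f$ is birational onto $\P^{3}$. Restricting to $D \cong \P^{1} \times \P^{1}$, the class $\xi_{P}|_{D}$ is the tautological class of $\P(\mc{O} \oplus \mc{O})$, namely $\mc{O}(0,1)$, whose linear system contracts the ruling $F \cap D$; hence $f$ contracts $D$ onto a curve $\ell = f(D) \cong \P^{1}$. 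Because $\rho(P) = 2$ and $\rho(\P^{3}) = 1$, $f$ is a divisorial extremal contraction sending its exceptional divisor $D$ onto the curve $\ell$, so by \cite[Theorem 3.3]{Mor} it is the blow-up of $\P^{3}$ along the smooth curve $\ell$. A final intersection computation (pairing $\xi_{P}$ with a section of $D \to \ell$, equivalently noting that $D = \P(N_{\ell}\P^{3}) \cong \F_{0}$ forces $N_{\ell}\P^{3} \cong \mc{O}(1)^{\oplus 2}$) identifies $\ell$ as a line.

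The computational content is light; the two places that need care are getting the $\P$-convention right so that the class $\xi_{P}-(n+1)F$ correctly reads off the kernel line bundle $\mc{O}(n+1)$, and verifying that the contraction of $D$ is precisely the blow-up of a line rather than some other divisorial contraction. I expect the latter to be the main point, and I would settle it via Mori's structure theorem together with the degree and normal-bundle computation above.
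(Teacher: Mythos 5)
Your proof is correct, but it takes a genuinely different route from the paper's. You determine $\mc{E}$ directly: the sub-$\P^{1}$-bundle $D$ corresponds to an exact sequence $0 \to \mc{L} \to \mc{E} \to \mc{Q} \to 0$ with $\mc{Q}$ of rank two, the class $D \sim \xi_{P}-(n+1)F$ reads off $\deg\mc{L}=n+1$, the condition $D \cong \F_{0}$ forces $\mc{Q} \cong \mc{O}_{\P^{1}}(n)^{\oplus 2}$, and the extension splits by $\mathrm{Ext}^{1}(\mc{O}(n)^{\oplus 2},\mc{O}(n+1))=0$; you then realize the blow-down as the morphism defined by $|\xi_{P}|$ and identify its center as a line. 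The paper instead proves that $-K_{P}$ is ample (restricting $-K_{P}\sim 3D+4F$ to $D$ and to fibers, then applying the base-point-free theorem), invokes the Mori--Mukai classification of Fano $3$-folds with $B_{2}=2$ to conclude $P\cong\F(0,0,1)$, and identifies $D$ with $E_{f}$ by comparing canonical-bundle formulas. Your approach is more self-contained in that it avoids the classification table; the paper's avoids the bundle-theoretic analysis at the cost of citing \cite[Table 2]{M-M}. Two points in your write-up are compressed but repairable: first, to apply \cite[Theorem 3.3]{Mor} you should note that the ruling $c$ of $D$ contracted by $f$ satisfies $(-K_{P}\cdot c)=(3\xi_{P}+F\cdot c)=1>0$ and $(D\cdot c)=-1$, so that $f$ is a $K$-negative extremal contraction whose exceptional locus is exactly $D$; second, $D\cong\F_{0}$ alone only forces $N_{\ell}\P^{3}$ to be balanced, so the identification of $\ell$ as a line should rest on the computation $h^{0}(D,\xi_{P}|_{D})=2$, i.e.\ $\xi_{P}|_{D}=(f|_{D})^{*}\mc{O}_{\ell}(1)$, rather than on the normal bundle. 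Neither is a gap in substance.
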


\begin{proof}
By replacing $\mc{E}$ by $\mc{E} \otimes \mc{O}_{\P^{1}}(-n)$, we may assume that $n=0$.
Let us show the ampleness of $-K_{P}$. 
It is obvious that $-K_{P}|_{F}$ is ample.
By the canonical bundle formula and the adjunction formula, we have:
\begin{eqnarray}
\ \ \ \ -K_{P} &\sim& 3\xi_{P}+F \sim 3D + 4F.\label{eq:6.0.1}\\
\ \ \ \ D|_{D} &\sim& -\frac12 (K_{P}+D)|_{D}-2F|_{D} \sim -\frac12 K_{D}-2f_{0} \sim \Sigma_{0}-f_{0}.
\end{eqnarray}
we thus get $-K_{P}|_{D} \sim (3D + 4F)|_{D} \sim 3\Sigma_{0}+f_{0}$, which is also ample.
 
Suppose that $(-K_{P} \cdot r) \leq 0$ holds for some curve $r \subset P$. 
Since both $-K_{P}|_{F}$ and $-K_{P}|_{D}$ are ample, (\ref{eq:6.0.1}) now shows that $r$ must be disjoint from any $p$-fiber, a contradiction.
Hence $-K_{P}$ is strictly nef.
On the other hand, we have $(-K_{P})^{3}=54$ since $P$ is a $\P^{2}$-bundle over $\P^{1}$. Hence $-K_{P}$ is big and semiample by the base-point free theorem.
Since $-K_{P}$ is strictly nef and semiample, it is ample.

Therefore $P$ is a Fano $\P^{2}$-bundle. 
By \cite[Table 2]{M-M}, $P$ is isomorphic to either $\P^{1} \times \P^{2}$ or $\F(0,0,1)$.
Since $\deg \mc{E}=1$, it holds that $P \cong \F(0,0,1)$ and $\mc{E} \cong \mc{O}_{\P^{1}} \oplus \mc{O}_{\P^{1}} \oplus \mc{O}_{\P^{1}}(1)$, which is the first assertion.

Since $F \sim f^{*}\mc{O}_{\P^{3}}(1)-E_{f}$ and $-K_{P} \sim f^{*}\mc{O}_{\P^{3}}(4)-E_{f} \sim 3E_{f}+4F$, the second assertion follows from (\ref{eq:6.0.1}).
\end{proof}

Now we can give a solution to Problem \ref{prob:main} (2) for compactification compatible with $\P^{2}$-bundles.

\begin{prop}\label{prop:p}
Let $(P, D_{h}, D_{f})$ be a compactification of $\A^{3}$ compatible with a $\P^{2}$-bundle $p \colon P \ra \P^{1}$. 
We follow the notation of Example \ref{ex:0-A}.
Regard $p(D_{f})$ and $p'(D_{f,2})$ as $\infty \in \P^{1}$.
Then there is the composition $g_{2} \colon P \dra P'$ of elementary links with center along linear subspaces in the fibers at $\infty$ such that $D_{h,2}=(D_{h})_{P'}$. 
In particular, there exists the following diagram of rational maps preserving $P \setminus (D_{h} \cup D_{f}) \cong \A^{3}$:
\begin{equation}
\xymatrix@C=15pt@R=15pt{
(P, D_{h}, D_{f}) \ar@{.>}[r]^-{g_{2}} \ar[d]_{p}
&(P', D_{h,2}, D_{f,2}) \ar[d]_{p'} \ar[r]^-{g_{3}}
&(\P^{3}, H)
\\ \P^{1} \ar@{=}[r]
&\P^{1},
&
}
\end{equation}
where $H\coloneqq g_{3*}D_{f,2}$.
\end{prop}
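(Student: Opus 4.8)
The plan is to reduce an arbitrary compactification $(P, D_h, D_f)$ compatible with a $\P^2$-bundle to the standard model $(P', D_{h,2}, D_{f,2})$ of Example \ref{ex:0-A} by repeatedly applying the elementary links of Lemma \ref{lem:p-p}, all with center in the fiber over $\infty$. First I would record the starting data: by the previous lemma $D_h$ is a sub $\P^1$-bundle, so we may write $D_h \sim \xi_P + a F$ for some $a \in \Z$, where $F = D_f$ is the fiber at $\infty$; let $\deg\mathcal{E}$ be the degree of an associated bundle. The goal model $(P', D_{h,2}, D_{f,2})$ is characterized by Lemma \ref{lem:001}: there $P' \cong \F(0,0,1)$, $\deg \mathcal{E}' = 1$, and $D_{h,2} \sim \xi_{P'} - F$ (i.e.\ the case $n=0$, $a = -1$), with $D_{h,2} \cong \F_0$ equal to the exceptional divisor of the blow-up $g_3 \colon P' \to \P^3$ along a line.

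**The iteration.**
The core of the argument is to run the elementary links of Lemma \ref{lem:p-p} centered at linear subspaces of the fiber $F$ over $\infty$, tracking $D_h$ via Corollary \ref{cor:p-p}. Each link replaces $(P, D_h)$ by $(P', D_{h,1})$ over the same base $\P^1$, lowers $\deg\mathcal{E}$ by $n+1$ (with $n = 0$ or $n=1$ the dimension of the center), and changes $a$ by $0$ or $+1$ according to whether the center is contained in $D_h$ or not. Crucially, because every center lies in the fiber $F = D_f$ over $\infty$ and $E_\psi$ is the strict transform of that fiber, the link is an isomorphism away from the fibers over $\infty$, so it is a birational map preserving $P \setminus (D_h \cup D_f) \cong \A^3$ and the strict transform of $D_h$ remains a sub $\P^1$-bundle. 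I would choose the center at each step—either a point or a line in $F$, and either inside or outside the current $D_h$—so as to drive the pair $(\deg\mathcal{E}, a)$ toward the target values $(1, -1)$ of Lemma \ref{lem:001}.

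**The main obstacle: controlling two invariants simultaneously.**
The hard part will be organizing the choices of centers so that both $\deg\mathcal{E}$ and the coefficient $a$ reach their target values at the same time, while keeping the strict transform of $D_h$ a sub $\P^1$-bundle isomorphic to $\F_0$ throughout (which is needed to apply Lemma \ref{lem:001} at the end). The bookkeeping is delicate because the two allowed link types affect $(\deg\mathcal{E}, a)$ differently: a link with a point center ($n=0$) drops $\deg\mathcal{E}$ by $1$ and can leave $a$ fixed or raise it by one, whereas a line center ($n=1$) drops $\deg\mathcal{E}$ by $2$. I would argue by induction on a suitable nonnegative quantity—for instance on $\deg\mathcal{E} - 1$ together with a normalization making $D_h$ the relatively minimal section $\xi_P - (n+1)F$—showing that at each stage there is a choice of center (governed by whether a fixed linear subspace of $F$ meets the fiber of $D_h$) that strictly decreases the invariant while preserving $D_h \cong \F_0$. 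Since $(-K_P)^3 = 54$ forces $\deg\mathcal{E}$ into a bounded range and each link with appropriate center lowers it, the process terminates precisely at the configuration of Lemma \ref{lem:001}.

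**Conclusion.**
Once the iteration reaches a model satisfying the hypotheses of Lemma \ref{lem:001}, that lemma identifies it with $(P', D_{h,2})$, where $P' \cong \F(0,0,1)$ and $D_{h,2}$ is the exceptional divisor of the blow-up $g_3 \colon P' \to \P^3$ along a line. The composite $g_2 \colon P \dra P'$ of the elementary links preserves $\A^3$ by construction, and $g_3$ contracts $D_{h,2}$ to a line, so $g_{3*} D_{f,2} = H$ is a hyperplane and $P' \setminus (D_{h,2} \cup D_{f,2}) \cong \P^3 \setminus H \cong \A^3$ as in Example \ref{ex:0-A}. This yields the asserted diagram of rational maps preserving $P \setminus (D_h \cup D_f) \cong \A^3$, completing the proof.
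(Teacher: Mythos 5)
Your overall strategy coincides with the paper's: iterate the links of Lemma \ref{lem:p-p} with centers in the fiber over $\infty$, track the pair $(\deg\mathcal{E},a)$ via Corollary \ref{cor:p-p}, and finish by recognizing the model of Example \ref{ex:0-A} through Lemma \ref{lem:001}. However, the step you yourself flag as ``the main obstacle'' is precisely where your argument has a genuine gap, and the mechanism you propose to close it does not work. Write $D_h\sim\xi_P+eF$ and $d=\deg\mathcal{E}$. The two admissible moves change $(d,e)$ by $(-1,+1)$ (point center off $D_h$) or by $(+1,-1)$ (center $L=D_f\cap D_h$, after retwisting $\mathcal{E}$ by $\mathcal{O}_{\P^1}(1)$), so $d+e$ is \emph{invariant} under the whole process, and retwisting $\mathcal{E}$ only changes it by even amounts. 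The configuration of Lemma \ref{lem:001} has $\deg\mathcal{E}'=3n+1$ and $D\sim\xi_{P'}-(n+1)F$, i.e.\ $d'+e'=2n$; hence the target is reachable only if $d+e$ is even. The paper establishes this parity by an adjunction computation on $D_h\cong\F_0$, comparing $-K_{D_h}\sim 2(D_h-(e-1)D_f)|_{D_h}-(d+e)D_f|_{D_h}$ with $-K_{\F_0}\sim2(\Sigma_0+f_0)$. Your proposal contains no substitute for this, and without it the claim that the iteration ``terminates precisely at the configuration of Lemma \ref{lem:001}'' is unjustified. Moreover your termination mechanism is incorrect: $(-K_P)^3=54$ holds for \emph{every} $\P^2$-bundle over $\P^1$, so it bounds nothing; and in fact no descent argument is needed, since the paper simply sets $m=\tfrac{d+3e}{2}+1$ (an integer exactly because $d+e$ is even) and applies one of the two link types $|m|$ times to land on the Lemma \ref{lem:001} configuration with $n=\tfrac{d+e}{2}$ --- not necessarily $n=0$ as you assume, though that discrepancy is harmless since $n$ can be normalized away by a twist.

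A second, smaller omission: your bookkeeping presumes $D_h\cong\F_0$ throughout, but the hypothesis only gives that $D_h$ is a sub $\P^1$-bundle, hence $\F_{d_0}$ for some $d_0\ge0$. The paper first performs $d_0$ elementary links centered at points of $D_h\cap D_f$ away from the minimal section, each inducing an elementary transformation of $D_h$ that lowers its degree by one; only after this preliminary reduction does the $(d,e)$-bookkeeping (and in particular the adjunction computation, which uses $D_h\cong\F_0$) make sense. You should add this step explicitly.
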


\begin{proof} 
Suppose that $D_{h} \cong \F_{d}$ for some $d>0$.
Take the elementary link $P \dra P_{1}$ with center at a point $p \in D_{h} \cap D_{f}$ such that $p \not \in \Sigma_{d}$. 
This elementary link preserves $P \setminus (D_{h} \cup D_{f}) \cong \A^{3}$.
Also we have $(D_{h})_{P_{1}} \cong \F_{d-1}$ because $P \dra P_{1}$ induces an elementary transform of $D_{h}$ with center at $p$.
Taking such elementary links $d$ times, we may assume that $D_{h} \cong \F_{0}$.

Let $\mc{E}$ be an associated vector bundle of $P$.
Set $d \coloneqq \deg{\mc{E}}$ and take $e \in \Z$ such that $D_{h} \sim \xi_{P}+eD_{f}$.

Let us show that $d+e \in 2\Z$.
By the canonical bundle formula and the adjunction formula, we have:
\begin{eqnarray}
-K_{X} &\sim& 3\xi_{P}-(d-2)D_{f} \sim 3D_{h}-(d+3e-2)D_{f}.\\
-K_{D_{h}} &\sim& (2D_{h}-(d+3e-2)D_{f})|_{D_{h}} \\
                &\sim& 2(D_{h}-(e-1)D_{f})|_{D_{h}}-(d+e)D_{f}|_{D_{h}}.\nonumber
\end{eqnarray} 
This gives $d+e \in 2\Z$ because $-K_{D_{h}}\sim 2(\Sigma_{0}+f_{0})$ and $D_{f}|_{D_{h}} \sim f_{0}$.

Now let $L \subset D_{f}$ be a linear subspace and $P \dra P_{1}$ the elementary link with center along $L$.
This elementary link preserves $P \setminus (D_{h} \cup D_{f}) \cong \A^{3}$.
Let $F$ be a fiber of the induced $\P^{2}$-bundle $p_{1} \colon P_{1} \to \P^{1}$.
Take an associated vector bundle $\mc{E}'$ of $P_{1}$ as in Lemma \ref{lem:p-p}.

Consider the case where $L$ is a point outside $D_{h}$. 
Then we have $(D_{h})_{P_{1}} \cong D_{h} \cong \F_{0}$.
By Lemma \ref{lem:p-p} and Corollary \ref{cor:p-p}, we have $\deg \mc{E}'=d-1$ and $(D_{h})_{P_{1}} \sim \xi_{P_{1}}+(e+1)F$.
For each $m \in \Z_{\geq 0}$, taking such elementary links $m$ times, we can replace $(d, e)$ with $(d-m, e+m)$. 

Consider the case where $L =D_{f}\cap D_{h}$. 
Then we have $(D_{h})_{P_{1}} \cong D_{h} \cong \F_{0}$.
Replacing $\mc{E}'$ with $\mc{E}' \otimes p_{1}^{*}\mc{O}_{\P^{1}}(1)$,
we have $\deg \mc{E}'=d+1$ and $(D_{h})_{P_{1}} \sim \xi_{P_{1}}+(e-1)F$ by Lemma \ref{lem:p-p} and Corollary \ref{cor:p-p}.
For each $m \in \Z_{\geq 0}$, taking such elementary links $m$ times, we can replace $(d, e)$ with $(d+m, e-m)$. 

Now set $m \coloneqq \frac{d+3e}{2}+1 \in \Z$ 
and replace $(d, e)$ with $(d+m, e-m)=(\frac{3(d+e)}{2}+1, -\frac{d+e}{2}-1)$.
Applying Lemma \ref{lem:001} with $n = \frac{d+e}{2}$, we have the assertion.
\end{proof}


Now we can prove Theorem \ref{thm:maindiag-A}.

\begin{proof}[Proof of Theorem \ref{thm:maindiag-A}]
We have shown that $\Sing D_{h}$ is a $q$-section in Lemma \ref{lem:nnorm}.
By Lemma \ref{lem:q-p}, there is the elementary link $g_{1} \colon Q \dra P$ with center along $\Sing D_{h}$ and the induced morphism $p \colon P \to \P^{1}$ is a $\P^{2}$-bundle.
Let $E$ be the exceptional divisor of the elementary link.
As in the proof of Theorem \ref{thm:main1} (A), we can show that $g_{1}$ induces an isomorphism
$\A^{3} \cong Q \setminus (D_{h} \cup D_{f}) \cong P \setminus (E \cup (D_{f})_{P}).$
Hence $(P, D_{h,1}, D_{f,1}) \coloneqq (P, E, (D_{f})_{P})$ is a compactification of $\A^{3}$ compatible with $p$, which proves (1).
The assertions (2) follow from Proposition \ref{prop:p}.
\end{proof}

\section{Proof of Theorem \ref{thm:maindiag-B}}\label{sec:mainB}

The remainder of the paper will be devoted to the proof of Theorem \ref{thm:maindiag-B}.
From now on, 
we assume that $(Q, D_{h}, D_{f})$ is a compactification of $\A^{3}$ compatible with a quadric fibration $q \colon Q \to C \cong \P^{1}$ such that $D_{h}$ is normal. Also we use the following notation:

\begin{nota}
For $d \in \Z_{>0}$, we will denote by $S_{d}$ the blow-up of $\F_{d}$ at a point outside $\Sigma_{d}$. We note that $S_{d}$ is also the blow-up of $\F_{d-1}$ at a point in $\Sigma_{d-1}$.
\end{nota}

\subsection{Singularities of $D_{h}$ and $D_{f}|_{D_{h}}$}\label{sec:sing}

First, we establish a relation between the singularity of $D_{f}|_{D_{h}}$ and that of $D_{h}$.
Theorem \ref{thm:main1} (B) shows that $D_{f} \cong \Q^{2}_{0}$ and $D_{h}|_{D_{f}} \sim \mc{O}_{\Q^{2}_{0}}(1)$.  
Hence $D_{h}|_{D_{f}}$ is either a smooth conic, the union of two distinct rulings, or a non-reduced curve supporting on a ruling of $\Q_{0}^{2}$. 

\begin{thm}\label{thm:main2}
We have the following correspondence.
\begin{enumerate}
\item[\textup{(1)}] If $D_{f}|_{D_{h}}$ is smooth, then $D_{h} \cong \F_{d}$ for some $d \geq 0$. 
\item[\textup{(2)}] If $D_{f}|_{D_{h}}$ is reducible, then $D_{h} \cong S_{d}$ for some $d > 0$.
\item[\textup{(3)}] If $D_{f}|_{D_{h}}$ is non-reduced, then $D_{h}$ has either exactly two DuVal singularities of type $A_{1}$, or the unique DuVal singularity of type $A_{3}$ or $D_{m}$ $(m \geq 4)$.
\end{enumerate}
\end{thm}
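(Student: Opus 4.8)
The plan is to analyze the local structure of $D_h$ near the fiber $D_f|_{D_h}$ by passing to the associated $\P^2$-bundle $p \colon P \to \P^1$ via the elementary link of Lemma \ref{lem:q-p}, and to transport the singularity data of $D_f|_{D_h}$ through the resolution picture encoded in Lemma \ref{lem:fiber}. By Theorem \ref{thm:main1} (B) we already know $D_f \cong \Q^2_0$ and $D_h|_{D_f} \sim \mc{O}_{\Q^2_0}(1)$, so the three trichotomy cases (smooth conic, two distinct rulings, non-reduced ruling) are exactly the three possibilities for the intersection $D_h \cap D_f$. The key observation is that away from $D_f$ every $(q|_{D_h})$-fiber is smooth (again by Theorem \ref{thm:main1} (B)), so $D_h$ is a smooth surface fibered over $\A^1 = \P^1 \setminus \{\infty\}$ by smooth rational curves, and all the interesting geometry is concentrated over $\infty = q(D_f)$. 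Thus the entire classification reduces to understanding the fiber of $q|_{D_h}$ over $\infty$ together with how $D_h$ sits inside $Q$ along that fiber.

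First I would treat case (1): if $D_f|_{D_h}$ is a smooth conic, then $D_h$ meets $D_f \cong \Q^2_0$ in a curve avoiding the vertex, and combined with smoothness of all other fibers one concludes $D_h$ is a smooth projective surface admitting a $\P^1$-fibration over $\P^1$ with all fibers irreducible; such a surface is a Hirzebruch surface $\F_d$. Here I would invoke the fact that a smooth projective surface fibered in smooth rational curves over $\P^1$ is a $\P^1$-bundle, hence some $\F_d$. For case (2), when $D_f|_{D_h}$ splits as two distinct rulings $l + l'$ of $\Q^2_0$, the fiber of $q|_{D_h}$ over $\infty$ is reducible while all other fibers are irreducible $\P^1$'s; this forces exactly one reducible fiber consisting of two $(-1)$-curves meeting at a point, so $D_h$ is a single blow-up of a Hirzebruch surface, which by the Notation is exactly some $S_d$. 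I would pin down $d > 0$ by noting that the blown-up point must lie off the minimal section — equivalently by a local normal-bundle computation of how $l, l'$ sit in $Q$ using the linear equivalence $2D_h \sim_C -K_Q$.

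The main obstacle, and the heart of the proof, is case (3): the non-reduced case, where $D_h|_{D_f} = 2r$ for a single ruling $r$ of $\Q^2_0$. Here $D_h$ genuinely acquires singularities, and the content of the theorem is the precise DuVal classification. The plan is to resolve $D_h$ by blowing up $Q$ along $r$ (as in the proofs of Lemma \ref{lem:q-q} and Lemma \ref{lem:nnorm}, where $N_r Q$ and the behavior of $F_{\wt Q} \cong \F_2$ are computed), then read off the exceptional configuration on the strict transform $(D_h)_{\wt Q}$. The singularities of $D_h$ correspond to the tangency data of $D_h$ with $D_f$ along $r$ and, crucially, to whether $r$ passes through the vertex of $\Q^2_0$; I expect the distinction between the $A_1 + A_1$, $A_3$, and $D_m$ types to come down to the contact order of $D_h$ with the two rulings through a point and the position of the vertex relative to $D_h$. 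Concretely I would compute $(D_h)_{\wt Q}|_{F_{\wt Q}}$ and $(D_h)_{\wt Q}^2 \cdot F_{\wt Q}$ via intersection numbers on $F_{\wt Q} \cong \F_2$ — exactly the type of calculation performed in equations (\ref{eq:2-4-2}) and (\ref{eq:2-4-3}) — and use the constraint that $U = Q \setminus (D_h \cup D_f) \cong \A^3$ is smooth to rule out worse singularities, leaving only rational double points. Classifying which rational double points can occur, and showing every listed type is realized, is the delicate bookkeeping step; I anticipate the enumeration of the $D_m$ series (with $m$ governed by the order of contact of $D_h$ with $r$ along the fiber) to be the most technical part.
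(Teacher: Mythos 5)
Your setup (pass to the $\P^{2}$-bundle $P$ via Lemma \ref{lem:q-p}, use Lemma \ref{lem:fiber} and Theorem \ref{thm:main1} (B) to localize everything over $\infty$) is the same as the paper's, and your case (1) is essentially correct. But the heart of the theorem is cases (2) and (3), and there your plan has genuine gaps. Already in case (2): the two rulings $l,l'$ of $\Q^{2}_{0}$ meet at the vertex $v$, and nothing you say rules out $D_{h}$ having a normal singularity at $v$ (e.g.\ an $A_{n}$ point $xy=z^{n+1}$ whose hyperplane section $z=0$ is still a reduced nodal curve); asserting that the fiber is ``two $(-1)$-curves'' presupposes the smoothness you are supposed to prove. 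The paper gets smoothness from the equality $(B\cdot (D_{h})_{P})_{P}=1$, which forces $(D_{h})_{\wt Q}\to (D_{h})_{P}$ to be a single point blow-up of a smooth surface with $(D_{h})_{\wt Q}\cong D_{h}$.

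In case (3) the proposal would not go through as written. First, the dichotomies you call crucial are vacuous: \emph{every} ruling of $\Q^{2}_{0}$ passes through the vertex, and since $D_{h}|_{D_{f}}=2r$ we always have $v\in r\subset D_{h}$, so neither ``whether $r$ passes through the vertex'' nor ``the position of the vertex relative to $D_{h}$'' can distinguish $2A_{1}$ from $A_{3}$ from $D_{m}$. Second, a single blow-up of $Q$ along $r$ cannot exhibit the exceptional configuration of an $A_{3}$ or $D_{m}$ point; an iterated procedure is needed, and you never identify the integer that controls it. In the paper this is $m=(B\cdot (D_{h})_{P})_{P}=(B\cdot s')_{G}+2a$, the contact order of the bisection $B$ with the sub-$\P^{1}$-bundle $(D_{h})_{P}$ over $\infty$, and the resolution is built by blowing up $P$ at the successive intersection points $x_{i}$ of $B_{P_{i}}$ with $(D_{h})_{P_{i}}$ (commuting point and curve blow-ups via Lemma \ref{lem:blnorm}) until the intersection is transverse; the dual graph of the resulting $(-2)$-curves $\wt f_{\infty}, e_{1},\dots,e_{m-1}$ is what yields $2A_{1}$, $A_{3}$ or $D_{m}$ according to $m=2$, $3$ or $\geq 4$. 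Third, your mechanism for excluding worse singularities --- smoothness of $U\cong\A^{3}$ --- gives no information, because $\Sing D_{h}$ lies on the boundary and is disjoint from $U$; the DuVal conclusion comes only from the explicit computation of the exceptional configuration, not from any a priori constraint.
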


\begin{proof}
Take a $q$-section $s \subset D_{h}$ and construct $\vp, \psi, p, \wt Q, P$ and $B$ as in Lemma \ref{lem:q-p}.
Write $G \coloneqq \psi_{*} E_\vp$,  $\infty \coloneqq q(D_{f})$, $f_{\infty} \coloneqq (p|_{G})^{*}(\infty)$ and $l_{t} \coloneqq (p|_{(D_{h})_{P}})^{*}(t)$ for $t \in C$.

Recall that $B \cong \P^{1}$, $D_{f} \cong \Q^{2}_{0}$ and singular $(q|_{D_{h}})$-fibers are at most $D_{f}|_{D_{h}}$ by Theorem \ref{thm:main1} (B).
In particular, $p|_{B}$ is ramified over $\infty$.
By Lemma \ref{lem:fiber}, $(p|_{G})$-fibers contained in $(D_{h})_{P}$ are at most $f_{\infty}$. 
Also $f_{\infty} \not\subset (D_{h})_{P}$ if and only if $D_{f}|_{D_{h}}$ is reduced.

By Lemma \ref{lem:sub-2}, we have $2D_{h} \sim_{C} -K_{Q}$.
Hence $(D_{h})_{P}$ is a sub $\P^{1}$-bundle of $P$ not containing $B$ by Lemma \ref{lem:q-p} (1). 
Since $G$ is also a sub $\P^{1}$-bundle of $P$, there exists a unique $p$-section $s'$ 
and $a \in \Z_{\geq 0}$ such that
$(D_{h})_{P}|_{G}=s'+a f_{\infty}.$

Let us show that $(B \cdot s')_{G} \leq 1$.
For $t \in C$, it holds that $l_{t} \cap B = \emptyset$ if and only if $(q|_{D_{h}})^{*}(t)$ is smooth by Lemma \ref{lem:fiber}. 
Since singular $(q|_{D_{h}})$-fibers are at most $D_{f}|_{D_{h}}$, we have $\Supp ((D_{h})_{P} \cap {B}) \subset f_{\infty}$. 
Hence $\Supp (s' \cap B) \subset \Supp (f_{\infty} \cap B)$. 
Since $p|_{B}$ is ramified over $\infty$, the support of $f_{\infty} \cap B$ is a point.
By the same reason, $B$ and $f_{\infty}$ have the same tangent direction at $\Supp (f_{\infty} \cap B)$ in $G$.
Since $(f_{\infty} \cdot s')_{G}=1$, we have $(B \cdot s')_{G} \leq 1$ as desired.

\noindent(1): Suppose that $D_{f}|_{D_{h}}$ is smooth. 
Then we have $a=0$ and $l_{\infty} \cap B= \emptyset$ by Lemma \ref{lem:fiber}.
The former implies that $(D_{h})_{P} \cup G$ is a SNC divisor, 
and the latter implies that $B \cap (D_{h})_{P} = \emptyset$.
Hence $\psi$ is an isomorphism along $(D_{h})_{\wt Q}$. 
Since $(D_{h})_{\wt Q} \cup E_{\vp} = (D_{h})_{\wt Q} \cup G_{\wt Q}$ is a SNC divisor, we have $D_{h} \cong (D_{h})_{\wt Q} \cong (D_{h})_{P}$, which is a Hirzebruch surface, and (1) is proved.

\noindent(2): Suppose that $D_{f}|_{D_{h}}$ is reducible. 
Then we have $a=0$ and $l_{\infty} \cap B \neq \emptyset$ by Lemma \ref{lem:fiber}.
Hence $(D_{h})_{P} \cup G$ is a SNC divisor. 
Also $(D_{h})_{\wt Q}$ is the blow-up of $(D_{h})_{P}$ at a point because $((D_{h})_{P} \cdot B)_{P}=(e \cdot B)_{G}=1$.
Since $(D_{h})_{\wt Q} \cap E_{\vp}$ is the strict transform of $s'$ in $\wt Q$, the divisor $(D_{h})_{\wt Q} \cup E_{\vp}$ is a SNC divisor and hence we have $D_{h} \cong (D_{h})_{\wt Q}$, which is the blow-up of a Hirzebruch surface at a point, and (2) is proved.

\noindent(3): Suppose that $D_{f}|_{D_{h}}$ is non-reduced. 
Then we have $a \geq 1$ by Lemma \ref{lem:fiber}.
Set $m \coloneqq (B \cdot (D_{h})_{P})_{P}=(B \cdot s')_{G} +2a \geq 2$. 

For $0 \leq i \leq m-1$, we define $P_{i}$, $\wt Q_{i}$, $x_{i}$, $h_{i}$ and $\psi_{i}$ by induction as follows.
Let $P_{0} \coloneqq P$, $\wt Q_{0} \coloneqq \wt Q$, $x_{0} \coloneqq \Supp((D_{h})_{P} \cap B)$, $h_{0} \coloneqq \mathrm{id}_{P}$ and $\psi_{0} \coloneqq \psi$.
For $i>0$, denote by $h_{i} \colon P_{i} \rightarrow P_{i-1}$ the blow-up at $x_{i-1}$. 
Let $x_{i} \coloneqq \Supp ((D_{h})_{P_{i}} \cap B_{P_{i}})$, which is a point.
We also define $\psi_{i} \colon \wt Q_{i} \ra P_{i}$ as the blow-up along $B_{P_{i}}$.

Then we have the following diagram by Lemma \ref{lem:blnorm} (1), 
where $\vp_{i} \colon \wt Q_{i} \ra \wt Q_{i-1}$ is the blow-up along $(\psi_{i-1})^{-1}(x_{i-1})$ for $1 \leq i \leq m-1$.
\begin{equation}
\xymatrix{
\wt Q_{m-1} \ar[r]^-{\vp_{m-1}} \ar[d]^-{\psi_{m-1}}
&\wt Q_{m-2} \ar[r]^-{\vp_{m-2}} \ar[d]^-{\psi_{m-2}}
&\cdots  \ar[r]^-{\vp_{2}} 
&\wt Q_{1} \ar[r]^-{\vp_{1}} \ar[d]^-{\psi_{1}} 
&\wt Q_{0} \ar[r]^-{\vp} \ar[d]^-{\psi_{0}=\psi} 
&Q
\\ P_{m-1} \ar[r]_-{h_{m-1}}
& P_{m-2} \ar[r]_-{h_{m-2}}
& \cdots \ar[r]_-{h_{2}}
& P_{1} \ar[r]_-{h_{1}}
& P_{0}.
&
}
\end{equation}

Let $\alpha \colon (D_{h})_{\wt Q_{m-1}} \ra D_{h}$ be the induced morphism.
To know the singularities on $D_{h}$, it suffices to detect that of $(D_{h})_{\wt Q_{m-1}}$ and the shape of \nolinebreak$E_{\alpha}$.

For $1 \leq i \leq m-1$, it holds that $(D_{h})_{P_{i}}$ is smooth and $((D_{h})_{P_{i}} \cdot B_{P_{i}})_{P_{i}}=m-i$ because $h_{i}$ is the blow-up at the point $x_{i-1}$.
Hence $(D_{h})_{P_{m-1}}$ intersects with $B_{P_{m-1}}$ at $x_{m-1}$ transversally
and $(D_{h})_{\wt Q_{m-1}}$ is the blow-up of $(D_{h})_{P_{m-1}}$ at $x_{m-1}$, which is also smooth.

Let us reveal the precise location of $x_{i} \in (D_{h})_{P_{i}}$ for $0 \leq i \leq m-1$ to detect the shape of $E_{\alpha}$.
Note that $x_{i} \in E_{h_{i}}$ by construction.
We already showed that $x_{0} = \Supp (B \cap f_{\infty})$.
Since $(B \cdot f_{\infty})_{G}=2$, we have $(B_{P_{1}} \cdot (f_{\infty})_{P_{1}})_{G_{P_{1}}}=1$.
Hence we have $x_{1}=\Supp(B_{P_{1}} \cap (f_{\infty})_{P_{1}})=\Supp (E_{h_{1}} \cap (f_{\infty})_{P_{1}})$.

We now turn to the case $i \geq 2$.
We have $x_{2} \not\in (f_{\infty})_{P_{2}}$ since $(B_{P_{2}} \cdot (f_{\infty})_{P_{2}})_{G_{P_{2}}}=0$.
Also we have $x_{2} \not \in (E_{h_{1}})_{P_{2}}$ since $(B_{P_{2}} \cdot (E_{h_{1}})_{P_{2}})_{P_{2}}=0$.
Hence $x_{2} \in E_{h_{2}}$ and $x_{2} \not\in (E_{h_{1}})_{P_{2}} \cup (f_{\infty})_{P_{2}}$.
Similarly, for $i \geq 3$, we have $x_{i} \in E_{h_{i}}$ and $x_{i} \not\in (E_{h_{i-1}})_{P_{i}}$.

Let $e_{i}$ be the strict transform of $E_{h_{i}}|_{D_{h,i}}$ in $\wt Q_{m-1}$ for $1 \leq i \leq m-1$.
Set $\wt f_{\infty} \coloneqq (f_{\infty})_{\wt Q_{m-1}}$, $\wt s \coloneqq s'_{\wt Q_{m-1}}$ and $r \coloneqq E_{\psi_{m-1}}|_{(D_{h})_{\wt Q_{m-1}}}$.
By the above observation on $x_{i}$, the configuration of $e_{i}$, $\wt f_{\infty}$, $\wt s$ and $r$ in $(D_{h})_{\wt Q_{m-1}}$ is as in FIGURE \ref{fig:1}.

\begin{figure}[htbp]
  \begin{center}
    \begin{tabular}{c}

      \begin{minipage}{0.22\hsize}
        \begin{center}
          \includegraphics[clip, width=30mm, bb=270 0 600 840]{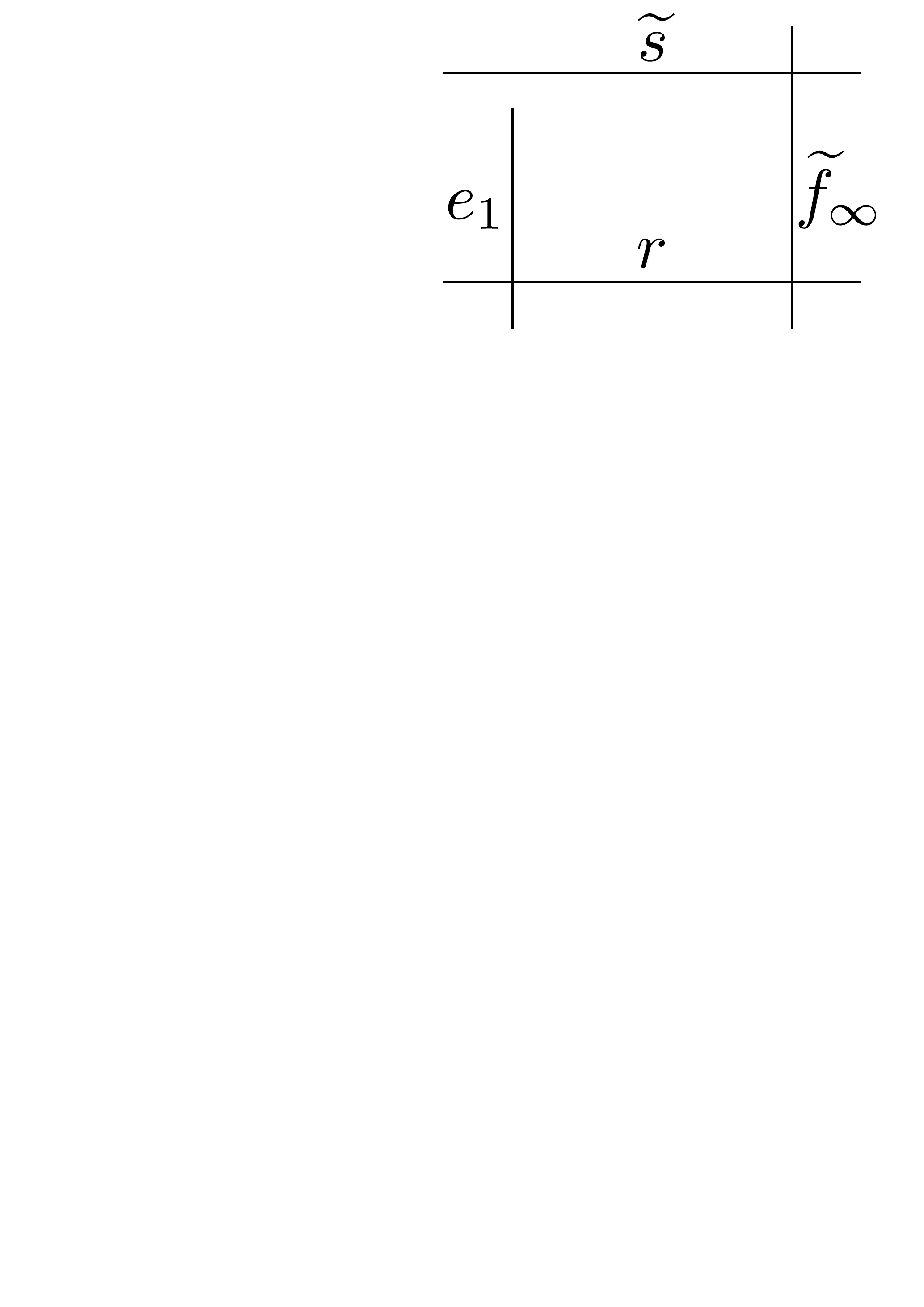}
           Case $m=2$
          \end{center}
          \vspace{2.5mm}
      \end{minipage}

      \begin{minipage}{0.22\hsize}
        \begin{center}
          \includegraphics[clip, width=30mm, bb=270 0 600 840]{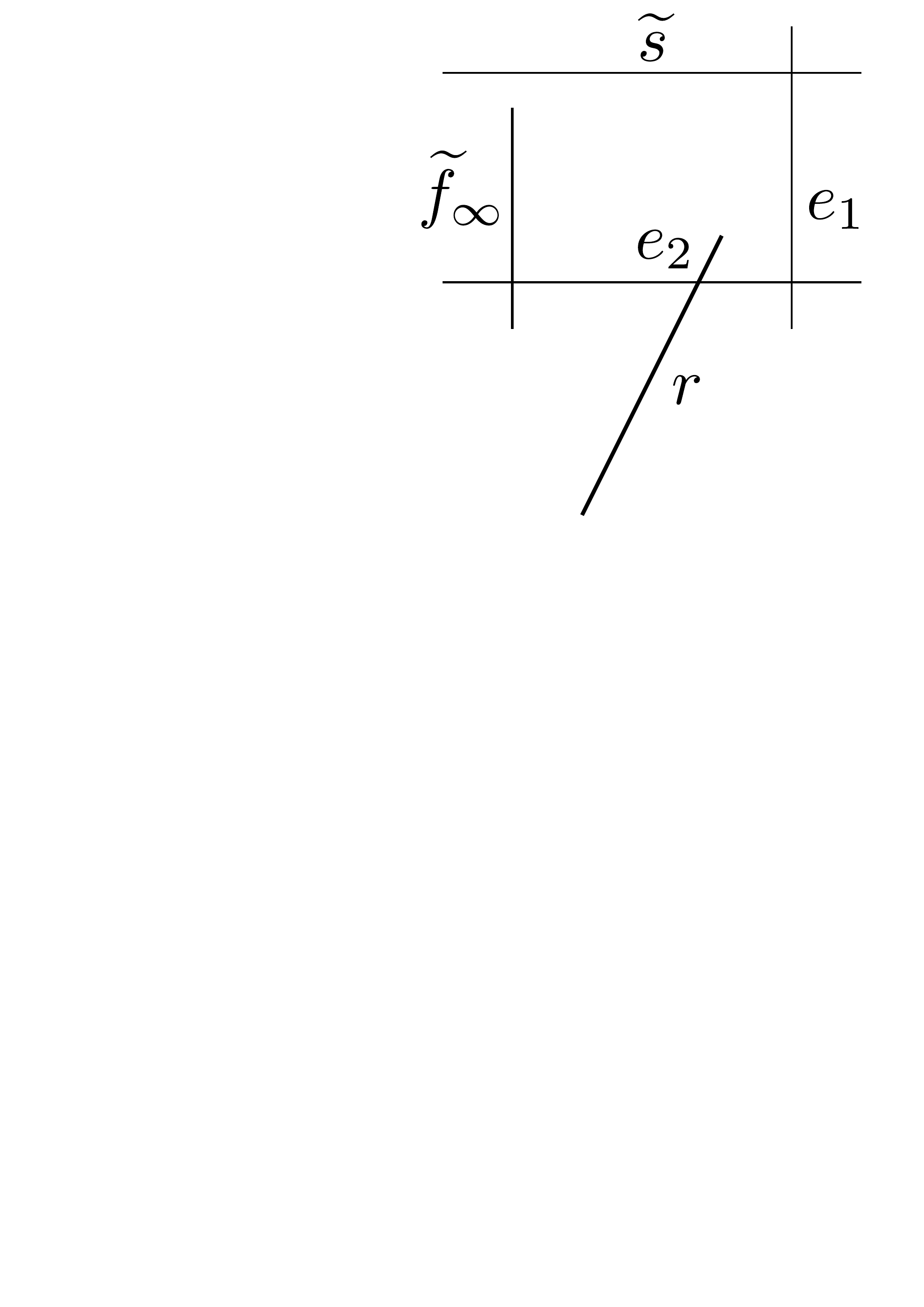}
          Case $m=3$
        \end{center}
        \vspace{2.5mm}
      \end{minipage}
      
      \begin{minipage}{0.22\hsize}
        \begin{center}
          \includegraphics[clip, width=30mm, bb=270 0 600 840]{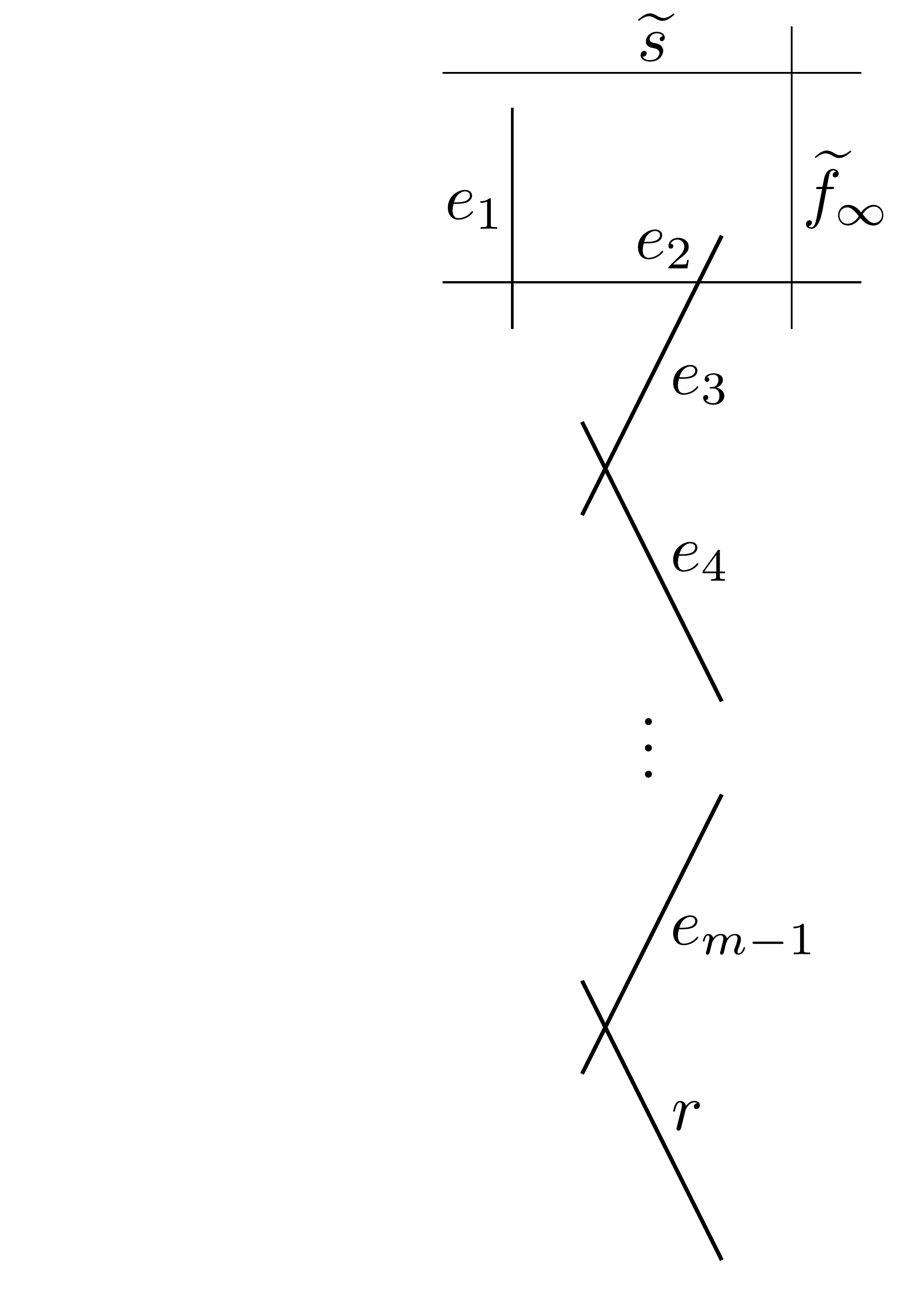}
          \hspace{1.6cm} Case $m \geq 4$ and $m$ is even
        \end{center}
      \end{minipage}
      
      \begin{minipage}{0.22\hsize}
        \begin{center}
          \includegraphics[clip, width=30mm, bb=270 0 600 840]{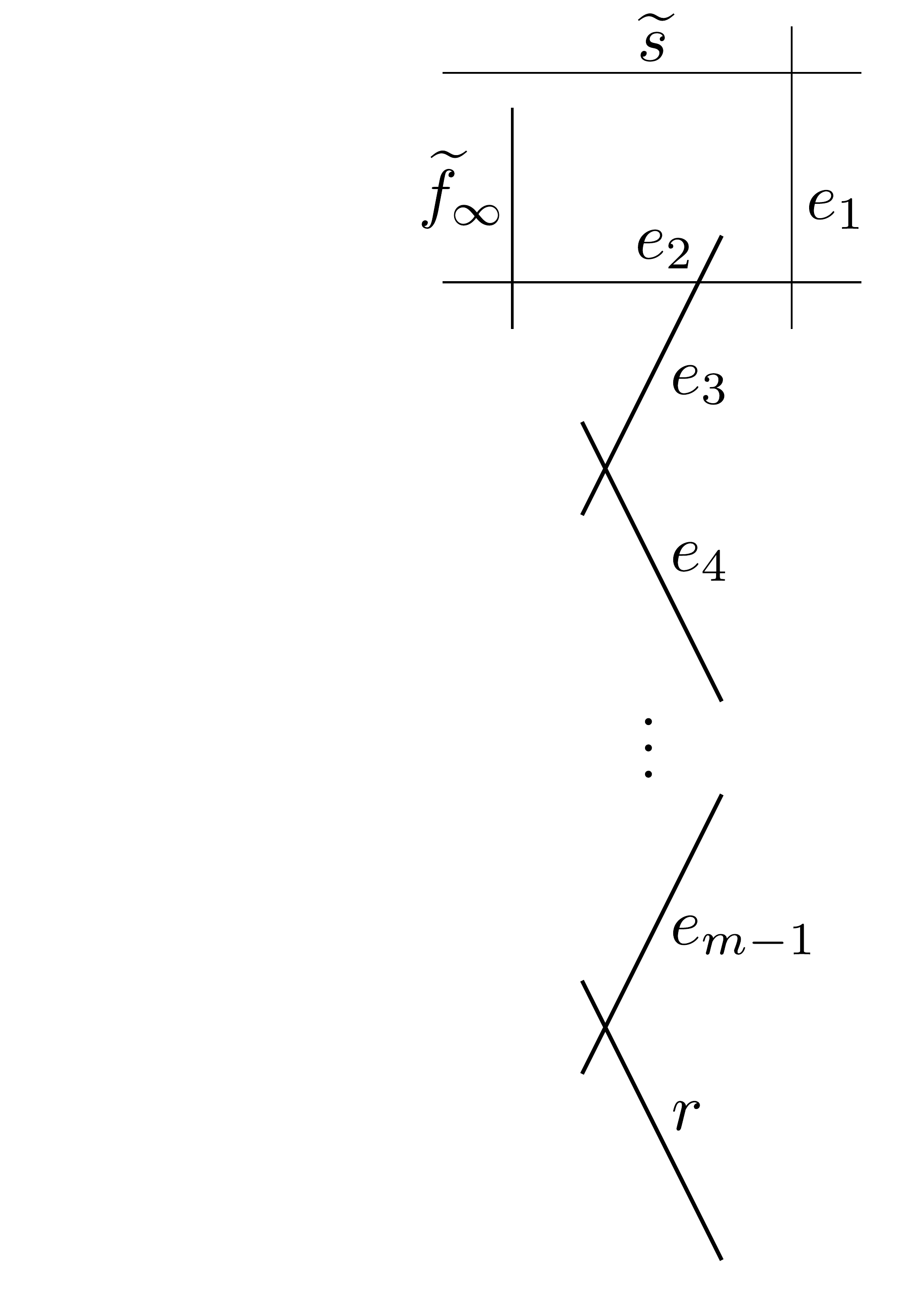}
          \hspace{1.6cm} Case $m \geq 4$ and $m$ is odd
        \end{center}
      \end{minipage}

    \end{tabular}
    \caption{The configuration of $e_{i}$, $\wt f_{\infty}$, $\wt s$ and $r$ in $(D_{h})_{\wt Q_{m-1}}$\label{fig:1}}
    \label{fig:lena}
  \end{center}
\end{figure}

It is clear that $(f_{\infty})_{\wt {Q}}$ is the exceptional divisor of $(D_{h})_{\wt Q} \to D_{h}$.
On the other hand, by Lemma \ref{lem:blnorm} (1), $e_{m-1}$ is the exceptional divisor of $(D_{h})_{\wt Q_{m-1}} \to (D_{h})_{\wt Q_{m-2}}$.  
Repeated application of Lemma \ref{lem:blnorm} (1) shows us that 
$E_{\alpha} = \wt f_{\infty} \cup \bigcup_{i=1}^{m-1}e_{i}$. Since each of them is $(-2)$-curve in $(D_{h})_{\wt Q_{m-1}}$, the singularity of $D_{h}$ is the DuVal singularity of type $2A_{1}$ when $m=2$, $A_{3}$ when $m=3$ and $D_{m}$ when $m \geq 4$, which completes the proof.
\end{proof}

The next aim is to construct explicit birational maps preserving $\A^{3}$ from $(Q, D_{h}, D_{f})$ to an another compactification $(Q', D_{h}', D_{f}')$ compatible with quadric fibration such that the singularity of $D_{h}'$ is milder than that of $D_{h}$.
To do so, we define the type of $(Q, D_{h}, D_{f})$ as follows.

\begin{defi}\label{defi:main}
Let $m \in \Z_{\geq 0}$.
We call $(Q, D_{h}, D_{f})$ \textit{a compactification of}
\begin{itemize}
\item \textit{type} $0$ when $D_{h} \cong \F_{d}$ for some $d \geq 0$. 
\item \textit{type} $1$ when $D_{h} \cong S_{d}$ for some $d > 0$.
\item \textit{type} $2$ when $D_{h}$ has two DuVal singularities of type $A_{1}$.
\item \textit{type} $3$ when $D_{h}$ has a DuVal singularity of type $A_{3}$
\item \textit{type} $m (\geq 4)$ when $D_{h}$ has a DuVal singularity of type $D_{m}$.
\end{itemize}
\end{defi}

We note that $D_{f}|_{D_{h}}$ contains a ruling of $\Q^{2}_{0} \cong D_{f}$ if and only if $m>0$.
It is easy to check that the number of the type coincides with $(B \cdot (D_{h})_{P})_{P}$ as in the proof of Theorem \ref{thm:main2}.
Hence we have the following.

\begin{cor}\label{cor:type} 
Take any $q$-section $s \subset D_{h}$ and construct $P$ and $B$ as in Lemma \ref{lem:q-p}.
Then $(Q, D_{h}, D_{f})$ is of type $m \in \Z_{\geq 0}$ if and only if $(B \cdot (D_{h})_{P})_{P}=m$.
\end{cor}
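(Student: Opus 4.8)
The plan is to read off the value of the intersection number $m := (B \cdot (D_{h})_{P})_{P}$ directly from the case analysis already carried out in the proof of Theorem \ref{thm:main2}, and to match it against the list in Definition \ref{defi:main}. For any choice of $q$-section $s \subset D_{h}$, Lemma \ref{lem:q-p} produces the $\P^{2}$-bundle $p \colon P \to C$ together with the smooth $p$-bisection $B$, and since $2D_{h} \sim_{C} -K_{Q}$ by Lemma \ref{lem:sub-2}, the divisor $(D_{h})_{P}$ is a sub $\P^{1}$-bundle of $P$ by Lemma \ref{lem:q-p} (1). Thus the construction and the number $m$ are well-posed for every such $s$.

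First I would treat the three cases of Theorem \ref{thm:main2} in turn, keeping track of $m$. When $D_{f}|_{D_{h}}$ is smooth, the proof of Theorem \ref{thm:main2} (1) shows $B \cap (D_{h})_{P} = \emptyset$, so $m = 0$, matching type $0$. When $D_{f}|_{D_{h}}$ is reducible, the proof of (2) gives $((D_{h})_{P} \cdot B)_{P} = 1$, so $m = 1$, matching type $1$. When $D_{f}|_{D_{h}}$ is non-reduced, the integer denoted $m$ in the proof of (3) is by definition $(B \cdot (D_{h})_{P})_{P} = (B \cdot s')_{G} + 2a \geq 2$, and there $D_{h}$ is shown to carry a DuVal singularity of type $2A_{1}$, $A_{3}$, or $D_{m}$ according as $m = 2$, $m = 3$, or $m \geq 4$; these are exactly types $2$, $3$, and $m$.

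Finally I would record that the correspondence just obtained is a bijection between the possible values of $m$ and the singularity types (resp.\ isomorphism classes) of $D_{h}$. Since the type of $(Q, D_{h}, D_{f})$ in Definition \ref{defi:main} depends only on $D_{h}$ and not on the auxiliary section $s$, this simultaneously forces $m$ to be independent of the choice of $s$ and to coincide with the type, yielding both implications at once. The corollary is thus a repackaging of Theorem \ref{thm:main2}, and I expect no genuine obstacle; the only point requiring care is the independence of $(B \cdot (D_{h})_{P})_{P}$ from the choice of $q$-section, which is handled automatically by the bijectivity of the correspondence between $m$ and the intrinsic singularity type of $D_{h}$.
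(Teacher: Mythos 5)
Your proposal is correct and follows the same route as the paper, which derives the corollary precisely by reading off $(B\cdot (D_{h})_{P})_{P}$ in each case of the proof of Theorem \ref{thm:main2} ($0$ in case (1), $1$ in case (2), and the integer $m=(B\cdot s')_{G}+2a\geq 2$ in case (3)) and matching it with Definition \ref{defi:main}. Your additional remark that the bijection between the intersection number and the intrinsic singularity type of $D_{h}$ forces independence of the choice of $q$-section is exactly the point the paper leaves implicit in the phrase ``it is easy to check.''
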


\subsection{The case of singular $D_{f}|_{D_{h}}$}\label{sec:decrease}

Next we shall give an elementary link from each compactification of $\A^{3}$ of type $m>0$ to that of type $(m-1)$. 
Composing such elementary links, we get a birational map from each compactification of $\A^{3}$ of type $m>0$ to that of type $0$.

\begin{lem}\label{lem:pqqpp}
Let $q \colon Q \ra C$ be a quadric fibration, $F$ a singular $q$-fiber, $s \subset Q$ a $q$-section and $l$ the ruling of $F \cong \Q^{2}_{0}$ which intersects with $s$.
We use the same letter $l$ and $s$ for their strict transformations by abuse of notation.
Consider the following four elementary links:
\begin{itemize}
\item $Q \xla{\vp_{1,1}} Q_{1,1} \xra{\psi_{1,1}} Q'$: the elementary link with center along $l$.
\item $Q' \xla{\vp_{1,2}} Q_{1,2} \xra{\psi_{1,2}} P_{1}$: the elementary link with center along $s$.
%
\item $Q \xla{\vp_{2,1}} Q_{2,1} \xra{\psi_{2,1}} P$: the elementary link with center along $s$.
\item $P \xla{\vp_{2,2}} Q_{2,2} \xra{\psi_{2,2}} P_{2}$: the elementary link with center at the point $x \coloneqq \psi_{2,1}(l)$.
\end{itemize}

Summarizing these notation, we have the following diagram:
\begin{equation}\label{diag:pqqpp}
\xymatrix@!C=17pt@R=15pt{
&{Q}_{1,2} \ar[dl]_-{\psi_{1,2}} \ar[dr]^-{\vp_{1,2}}
&
&Q_{1,1} \ar[dl]_-{\psi_{1,1}} \ar[dr]^-{\vp_{1,1}}
&
&{Q}_{2,1} \ar[dl]_-{\vp_{2,1}} \ar[dr]^-{\psi_{2,1}}
&
&{Q}_{2,2} \ar[dl]_-{\vp_{2,2}} \ar[dr]^-{\psi_{2,2}}
&
\\P_{1} \ar[d]
&
&Q' \ar[d]
&
&Q \ar[d]
&
&P \ar[d]
&
&P_{2} \ar[d]
\\ C \ar@{=}[rr]
& 
&C \ar@{=}[rr]
& 
&C \ar@{=}[rr]
& 
&C \ar@{=}[rr]
& 
&C,
}
\end{equation}
Then the birational map $\iota \colon P_{1} \dra P_{2}$ induced by $(\ref{diag:pqqpp})$ is an isomorphism.
\end{lem}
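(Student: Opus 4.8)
The plan is to localize over the single fibre $F$ and then compare the two routes from $Q$ to $P_{1}$ and to $P_{2}$ through a common smooth model. Write $t_{0}\coloneqq q(F)\in C$. Since $l$ is a ruling of $F\cong\Q^{2}_{0}$ and the $x$-link has its centre in the fibre over $t_{0}$, both $Q\xla{\vp_{1,1}}Q_{1,1}\xra{\psi_{1,1}}Q'$ (the $l$-link) and $P\xla{\vp_{2,2}}Q_{2,2}\xra{\psi_{2,2}}P_{2}$ (the $x$-link) are isomorphisms over $C\setminus\{t_{0}\}$. On the other hand $\psi_{1,2}\circ\vp_{1,2}^{-1}$ and $\psi_{2,1}\circ\vp_{2,1}^{-1}$ are both the elementary link of Lemma \ref{lem:q-p} along the very same section $s$, whose strict transform is unaffected over $C\setminus\{t_{0}\}$ by the local $l$-link. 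Hence over $C\setminus\{t_{0}\}$ the two routes coincide and $\iota$ is an isomorphism there, and it remains to treat a neighbourhood of the fibres over $t_{0}$. I would first record the local geometry there: since $(s\cdot F)_{Q}=1$, the section meets $F$ in a single point $p$ that is not the vertex $v$, and $l$ is the unique ruling of the cone through $p$; thus $s$ and $l$ are smooth curves meeting transversally at $p$ (indeed $T_{p}l\subset T_{p}F$ while $T_{p}s\not\subset T_{p}F$).

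The main step is to build one smooth projective $3$-fold $W$ carrying birational morphisms $g_{1}\colon W\to P_{1}$ and $g_{2}\colon W\to P_{2}$, obtained from $Q$ by blowing up along $s$, $l$, and at $p$ (and possibly the vertex $v$) in suitable orders and then resolving the blow-downs appearing in $(\ref{diag:pqqpp})$. The commutation Lemma \ref{lem:blnorm} is the device that reconciles the two orders: ``blow up $l$ and then link along $s$'' (the route through $Q'$) and ``link along $s$ and then blow up the point $x=\psi_{2,1}(l)$'' (the route through $P$) differ exactly by interchanging a blow-up along a curve with a blow-up at a point of it, which by Lemma \ref{lem:blnorm}(1) leaves the output unchanged. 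Here the transform of $F$ becomes $\F_{2}$ under the $l$-blow-up as in the proof of Lemma \ref{lem:q-q}, and its contraction in the two possible directions is precisely what produces the new ruling $l'$ on one side and the point $x$ on the other. Having realized both $P_{i}$ as contractions of this single $W$, I would check that $g_{1}$ and $g_{2}$ contract exactly the same curves of $W$, all of which lie over $t_{0}$. Then $g_{2}$ is constant on the fibres of $g_{1}$, so by the rigidity lemma it factors as $g_{2}=\iota\circ g_{1}$ with $\iota$ a morphism, and symmetrically $g_{1}=\iota^{-1}\circ g_{2}$; hence $\iota$ is an isomorphism.

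The hard part is the explicit bookkeeping over the singular fibre $F\cong\Q^{2}_{0}$: one must track the vertex $v$, the ruling $l$, and the point $p=s\cap F$ through each blow-up and blow-down, identify the resulting $(-1)$- and $(-2)$-curves in the relevant surfaces, and verify that the two extremal faces of $\ol{\NE}(W/C)$ contracted to $P_{1}$ and to $P_{2}$ coincide. Away from $t_{0}$ this is automatic by the first paragraph, so the entire content is this fibre computation; the formulas of Lemma \ref{lem:q-p}(1) (controlling $E_{\vp}$, $E_{\psi}$ and the sub-$\P^{1}$-bundles) together with the normal-bundle computation $N_{l}Q\cong\mc{O}_{l}(1)\oplus\mc{O}_{l}(-1)$ from the proof of Lemma \ref{lem:q-q} are the inputs that make it mechanical.
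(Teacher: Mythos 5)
Your skeleton is sound and agrees with the paper's up to the decisive last step: like the paper, you concentrate everything over the fibre $F$, observe that $s$ and $l$ meet transversally at a single point $p$ distinct from the vertex, and use Lemma \ref{lem:blnorm} to commute the curve blow-ups with the point blow-up. Where you diverge is in how the two sides are finally compared. The paper does \emph{not} build a single smooth $W$ with morphisms to both $P_{1}$ and $P_{2}$: it forms $X_{1}\cong\Bl_{s}\Bl_{l}Q$ (dominating $Q_{1,1}$ and $Q_{1,2}$) and $X_{2}\cong\Bl_{l}\Bl_{s}Q$ (dominating $Q_{2,1}$ and $Q_{2,2}$), notes that by the transversality of $s$ and $l$ these differ by the Atiyah flop --- hence are isomorphic only in codimension one --- and then invokes \cite[Proposition 3.5]{Cor} (already cited after Definition \ref{defi:elem}) to reduce the whole lemma to showing that $\iota$ is an isomorphism in codimension one. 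That reduction changes the nature of the verification: one no longer has to identify extremal faces of $\ol{\NE}(W/C)$ or match all contracted curve classes; it suffices to match the \emph{divisors} contracted on the two sides, which the paper does in two lines ($E_{\chi_{1}}$ and $E_{\psi_{2,2}}$ are both the strict transform of $F$, while $E_{\chi_{2}}$ and $E_{\psi_{1,2}}$ are both the strict transform of $E_{\psi_{2,1}}$, the latter by Lemma \ref{lem:nnorm} (2)).

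Your alternative --- a common resolution $W$ plus the rigidity lemma --- is viable in principle, but the step you defer as ``the hard part'' (verifying that $g_{1}$ and $g_{2}$ contract exactly the same curves of $W$ over $t_{0}$) is essentially equivalent to the lemma itself and is not carried out; as written, the proposal stops at precisely the point where the content lies. Note also that since $\Bl_{s}\Bl_{l}Q\dra\Bl_{l}\Bl_{s}Q$ is a flop and not a morphism in either direction, your $W$ must dominate both models, e.g.\ by first blowing up $p$ and then the now-disjoint strict transforms of $s$ and $l$; only then do $g_{1}$ and $g_{2}$ exist as morphisms, and one must still compute the relevant $(-1)$- and $(-2)$-curves in the transforms of $F\cong\Q^{2}_{0}$ and of $E_{\psi_{2,1}}$ to see that the two contracted faces agree. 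The cleaner fix is to import the codimension-one reduction from \cite[Proposition 3.5]{Cor}, after which your transversality observation and Lemma \ref{lem:blnorm} already supply everything needed.
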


\begin{proof}
By \cite[Proposition 3.5]{Cor}, we only have to show that $\iota$ is an isomorphism in codimension one.

Let $l'$ be the strict transform of the center of $\psi_{1,1}$ in $Q_{1,2}$. Let $\chi_{1} \colon X_{1} \to Q_{1,2}$ be the blow-up along $l'$. 
Since $s \subset Q_{1,1}$ is disjoint from $F_{Q_{1,1}}=E_{\psi_{1,1}}$, 
we have $X_{1} \cong \Bl_{s} Q_{1,1}$.
On the other hand, let $B$ be the strict transform of the center of $\psi_{2,1}$ in $Q_{2,2}$. Let $\chi_{2} \colon X_{2} \to Q_{2,2}$ be the blow-up along $B$. 
By Lemma \ref{lem:blnorm} (1), we have $X_{2} \cong \Bl_{l} Q_{2,1}$.
Summarizing these arguments, we have the following diagram:
\begin{equation}
\xymatrix@!C=17pt@R=15pt{
& 
&X_{1} \ar[dl]_-{\chi_{1}} \ar[dr]^-{\Bl_{s}}
&
&
&
&X_{2} \ar[dl]_-{\Bl_{l}} \ar[dr]^-{\chi_{2}}
&
&
\\
&{Q}_{1,2} \ar[dl]_-{\psi_{1,2}} \ar[dr]^-{\vp_{1,2}}
&
&{Q}_{1,1} \ar[dl]_-{\psi_{1,1}} \ar[dr]^-{\vp_{1,1}}
&
&{Q}_{2,1} \ar[dl]_-{\vp_{2,1}} \ar[dr]^-{\psi_{2,1}}
&
&{Q}_{2,2} \ar[dl]_-{\vp_{2,2}} \ar[dr]^-{\psi_{2,2}}
&
\\P_{1}
&
&Q' 
&
&Q
&
&P 
&
&P_{2},
}
\end{equation}

In $Q$, the curve $s$ intersects with $l$ transversally. Hence the induced map $X_{1} \dra X_{2}$ is the Atiyah flop.
By construction both $E_{\chi_{1}}$ and $E_{\psi_{2,2}}$ are the strict transforms of $F$.
By Lemma \ref{lem:nnorm} (2) both $E_{\chi_{2}}$ and $E_{\psi_{1,2}}$ are the strict transforms of $E_{\psi_{2,1}}$.
Therefore $\iota$ is also an isomorphism in codimension one, which completes the proof.
\end{proof}

\begin{thm}\label{thm:main4}
Suppose that $(Q, D_{h}, D_{f})$ is of type $m>0$. 
Let $l$ be an irreducible component of $\Supp (D_{f}|_{D_{h}})$ and take the elementary link $Q \la Q_{1,1} \ra Q'$ with center along $l$. 
Let $E$ be the exceptional divisor of the elementary link. 
Then $(Q', (D_{h})_{Q'}, E)$ is a compactification of $\A^{3}$ compatible with a quadric fibration of type $(m-1)$.
\end{thm}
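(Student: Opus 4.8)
The plan is to realize the assertion as the left-hand portion of the diagram (\ref{diag:pqqpp}) in Lemma \ref{lem:pqqpp} and to read off the new type from Corollary \ref{cor:type}. First I would fix a $q$-section $s \subset D_{h}$ with $s \cap D_{f} \in l$; since $m>0$ gives $l \subset \Supp(D_{f}|_{D_{h}}) = D_{h} \cap D_{f}$, such a section exists, and then $l$ is exactly the ruling of $D_{f} \cong \Q^{2}_{0}$ meeting $s$, so the hypotheses of Lemma \ref{lem:pqqpp} hold. By Lemma \ref{lem:q-q} the link $Q \xla{\vp_{1,1}} Q_{1,1} \xra{\psi_{1,1}} Q'$ with center $l$ produces a quadric fibration $q' \colon Q' \to C$, and both $\vp_{1,1}$ and $\psi_{1,1}$ alter only the fiber over $\infty \coloneqq q(D_{f})$, so the link restricts to an isomorphism over $C \setminus \{\infty\}$.

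Next I would identify $E = \psi_{1,1 *}(E_{\vp_{1,1}})$. As $E_{\vp_{1,1}}$ lies over $\infty$ and $E$ is a divisor by \cite[Proposition 3.5]{Cor}, it is the whole $q'$-fiber over $\infty$. Since $l \subset D_{f}$ is contained in the removed locus and the center $l'$ of $\psi_{1,1}$ is a ruling of $E$, the two contractions give
\[ Q' \setminus ((D_{h})_{Q'} \cup E) \cong Q_{1,1} \setminus ((D_{h})_{Q_{1,1}} \cup (D_{f})_{Q_{1,1}} \cup E_{\vp_{1,1}}) \cong Q \setminus (D_{h} \cup D_{f}) \cong \A^{3}. \]
Hence $(Q', (D_{h})_{Q'}, E)$ is a compactification of $\A^{3}$ compatible with $q'$. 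To invoke the type framework I must also check that $(D_{h})_{Q'}$ is normal. Over $C \setminus \{\infty\}$ the link is an isomorphism, so there $(D_{h})_{Q'} \cong D_{h}$ is smooth by Theorem \ref{thm:main1}(B); thus $\Sing (D_{h})_{Q'}$ lies over $\infty$. By Lemma \ref{lem:sub-2} applied to $Q'$ we have $2(D_{h})_{Q'} \sim_{C} -K_{Q'}$, so $(D_{h})_{Q'}$ is Cartier in the smooth $3$-fold $Q'$, hence Cohen--Macaulay and therefore normal as soon as its singularities are isolated. Were it non-normal, Lemma \ref{lem:nnorm}(1) would force a whole $q'$-section into $\Sing (D_{h})_{Q'}$, contradicting that this locus sits over the single point $\infty$. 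So $(D_{h})_{Q'}$ is normal, and in particular $E \cong \Q^{2}_{0}$ by Theorem \ref{thm:main1}(B).

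The crux is to show the type drops by one. By Corollary \ref{cor:type}, computing the type of $Q'$ through the section $s_{Q'}$ and the link $Q' \xla{\vp_{1,2}} Q_{1,2} \xra{\psi_{1,2}} P_{1}$ gives $\mathrm{type}(Q') = (B_{1} \cdot (D_{h})_{P_{1}})_{P_{1}}$, while computing the type of $Q$ through $s$ and the link $Q \xla{\vp_{2,1}} Q_{2,1} \xra{\psi_{2,1}} P$ gives $\mathrm{type}(Q) = (B \cdot (D_{h})_{P})_{P} = m$. Lemma \ref{lem:pqqpp} supplies an isomorphism $\iota \colon P_{1} \dra P_{2}$, under which $B_{1}$ and $(D_{h})_{P_{1}}$ correspond to the strict transforms $B_{P_{2}}$ and $(D_{h})_{P_{2}}$, so $\mathrm{type}(Q') = (B_{P_{2}} \cdot (D_{h})_{P_{2}})_{P_{2}}$. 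Finally I would analyse the elementary link $P \dra P_{2}$ between $\P^{2}$-bundles (Lemma \ref{lem:p-p}), which is the blow-up at the point $x = \psi_{2,1}(l)$ followed by a contraction. Using Lemma \ref{lem:fiber}, the divisor $\vp_{2,1 *}(E_{\psi_{2,1}})$ restricts to $2l$ on $D_{f}$, so the strict transform of $l$ in $Q_{2,1}$ lies in $E_{\psi_{2,1}} \cap (D_{f})_{Q_{2,1}}$ and is contracted by $\psi_{2,1}$ to $x = B \cap F_{P}$, where $F_{P} \coloneqq p^{-1}(\infty)$. Since $B$ meets $(D_{h})_{P}$ only over $\infty$ and $p|_{B}$ is ramified there (as in the proof of Theorem \ref{thm:main2}), the intersection $B \cap (D_{h})_{P}$ is the single point $x$, at which both surfaces are smooth and where all of their intersection is concentrated; blowing up $x$ then lowers $(B \cdot (D_{h})_{P})_{P}$ by exactly one, and the subsequent contraction leaves the count unchanged, yielding $(B_{P_{2}} \cdot (D_{h})_{P_{2}})_{P_{2}} = m-1$.

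The main obstacle is precisely this last bookkeeping: pinning down the center $x$ via Lemma \ref{lem:fiber}, checking that $B$ and $(D_{h})_{P}$ are smooth at $x$ so that the blow-up drops the intersection number by exactly one while the contraction $\psi_{2,2}$ preserves it, and verifying that the isomorphism $\iota$ of Lemma \ref{lem:pqqpp} carries $B_{1}$ and $(D_{h})_{P_{1}}$ to the strict transforms $B_{P_{2}}$ and $(D_{h})_{P_{2}}$. Granting these, one concludes $\mathrm{type}(Q') = m-1$, completing the proof.
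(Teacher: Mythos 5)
Your proposal is correct and follows essentially the same route as the paper: Lemma \ref{lem:q-q} for preservation of $\A^{3}$, Lemma \ref{lem:nnorm} for normality of $(D_{h})_{Q'}$, and then Lemma \ref{lem:pqqpp} together with Corollary \ref{cor:type} to reduce the type computation to showing that the point blow-up at $x=\psi_{2,1}(l)\in B\cap (D_{h})_{P}$ drops the intersection number $(B\cdot (D_{h})_{P})_{P}$ from $m$ to $m-1$. The paper carries out your final ``bookkeeping'' step exactly as you sketch it, justifying that the contraction $\psi_{2,2}$ does not change the count via Corollary \ref{cor:p-p} (which gives $(D_{h})_{Q_{2,2}}\sim\psi_{2,2}^{*}(D_{h})_{P_{1}}$) and the smoothness of $B$ at $x$.
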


\begin{proof}
By Lemma \ref{lem:q-q}, we have $Q \setminus ((D_{h})_{Q'} \cup E) \cong \A^{3}$. 
By Lemma \ref{lem:nnorm}, $(D_{h})_{Q'}$ is normal. 
Hence it suffices to show that $(Q', (D_{h})_{Q'}, E)$ is of type $(m-1)$.

By Theorem \ref{thm:main2} we can take a $q$-section $s \subset D_{h}$ intersecting with $l$.
Take elementary transformations as in Lemma \ref{lem:pqqpp}.
Let $B \subset P_{1}$ be the center of $\psi_{1,2}$.
By Corollary \ref{cor:type}, it suffices to show that $((D_{h})_{P_{1}} \cdot B)_{P_{1}}=m-1$.

By Lemma \ref{lem:pqqpp} we have $P_{1} \cong P_{2}$ and $B_{P}$ is the center of $\psi_{2,1}$.
Since $D_{h}|_{D_{f}}$ is not smooth, Lemma \ref{lem:fiber} now implies $x \in (D_{h})_{P} \cap B_{P}$.
Hence $(D_{h})_{Q_{2,2}} \sim \vp_{2,2}^{*}(D_{h})_{P}-E_{\vp_{2,2}} \sim \psi_{2,2}^{*}(D_{h})_{P_{1}}$ by Corollary \ref{cor:p-p} and 
$((D_{h})_{P_{1}} \cdot B)_{P_{1}}=((D_{h})_{P} \cdot B_{P})_{P}-(E_{\vp_{2,2}} \cdot B_{Q_{2,2}})_{Q_{2,2}}=m-1$ by Corollary \ref{cor:type}.
\end{proof}

An easy computation shows the following.

\begin{cor}\label{cor:type1}
We follow the notation of Theorem \ref{thm:main4}. 
Suppose that $m=1$ and take $d > 0$ such that $D_{h} \cong S_{d}$.  
Then $(D_{h})_{Q'} \cong \F_{d}$ (resp.\ $\F_{d-1}$) when $l$ intersects with (resp.\ is disjoint from) the strict transform of $\Sigma_{d}$ in $S_{d}$.
\end{cor}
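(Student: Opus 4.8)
The plan is to trace the surface $D_{h}$ through the two steps of the elementary link and identify the resulting Hirzebruch surface by a direct contraction count. Following the notation of Theorem \ref{thm:main4}, write the link as $Q \xla{\vp} \wt Q \xra{\psi} Q'$, where by Lemma \ref{lem:q-q} $\vp \colon \wt Q = \Bl_{l} Q \ra Q$ is the blow-up along $l$ and $\psi \colon \wt Q \ra Q'$ is the divisorial contraction of the strict transform $F_{\wt Q}$ of the singular fibre $F = D_{f} \cong \Q^{2}_{0}$ containing $l$. First I would observe that $\vp$ does nothing to $D_{h}$: since $D_{h} \cong S_{d}$ is smooth and $l$ is a smooth curve on it, i.e. a Cartier divisor, the restriction $\vp|_{(D_{h})_{\wt Q}} \colon (D_{h})_{\wt Q} \ra D_{h}$ is an isomorphism. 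Thus $(D_{h})_{\wt Q} \cong S_{d}$, and $(D_{h})_{\wt Q} \cap E_{\vp}$ is carried isomorphically onto $l$.

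Next I would pin down the curve contracted by $\psi$ on this surface. Since $m = 1$, Theorem \ref{thm:main2} shows that $D_{f}|_{D_{h}}$, the fibre of $q|_{D_{h}}$ over $\infty$, is reducible, equal to $l + l''$ where $l, l''$ are the two rulings of $\Q^{2}_{0}$; under $D_{h} \cong S_{d} = \Bl_{p}\F_{d}$ with $p \notin \Sigma_{d}$ these are exactly the two $(-1)$-curves of the reducible fibre, namely the exceptional curve $e$ (disjoint from $\Sigma_{d}$) and the strict transform $\wt f$ of the fibre through $p$ (meeting $\Sigma_{d}$). Because $\vp|_{D_{h}}$ is an isomorphism carrying $(D_{h})_{\wt Q} \cap E_{\vp}$ onto $l$, the component of $D_{f}|_{D_{h}}$ still lying on $F_{\wt Q}$ is the other ruling: one checks $(D_{h})_{\wt Q} \cap F_{\wt Q} = l''_{\wt Q}$, the strict transform of $l''$ (away from the centre $\vp$ is an isomorphism, so $l''_{\wt Q} \subset F_{\wt Q}$, while the copy of $l$ now sits inside $E_{\vp}$). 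As $l''$ is a ruling of $\Q^{2}_{0}$, its transform is a fibre $f_{2}$ of $F_{\wt Q} \cong \F_{2}$, and $\psi$ contracts precisely such fibres; hence $\psi|_{(D_{h})_{\wt Q}}$ is the blow-down of the $(-1)$-curve $l''$, so that $(D_{h})_{Q'}$ is obtained from $S_{d}$ by contracting $l''$, the component of the reducible fibre other than the centre $l$.

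Finally I would conclude by casework on $l$. If $l$ meets $\Sigma_{d}$ then $l = \wt f$ and $l'' = e$, so contracting $l'' = e$ undoes the blow-up $S_{d} = \Bl_{p}\F_{d}$ and recovers $\F_{d}$. If $l$ is disjoint from $\Sigma_{d}$ then $l = e$ and $l'' = \wt f$, so contracting $l'' = \wt f$ raises the self-intersection of $\Sigma_{d}$ from $-d$ to $-(d-1)$, completing the elementary transformation of $\F_{d}$ at $p \notin \Sigma_{d}$ and yielding $\F_{d-1}$, in agreement with the alternative description of $S_{d}$ as the blow-up of $\F_{d-1}$ at a point of $\Sigma_{d-1}$. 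The only genuine subtlety — the main obstacle — is the bookkeeping of the second paragraph: one must be careful that on $D_{h}$ it is the ruling $l''$ \emph{other} than the centre $l$ that gets contracted (the centre itself being absorbed into $E_{\vp}$), and then correctly match whether $l$ itself equals $e$ or $\wt f$; once this identification is made, the remaining assertion is the routine contraction count above.
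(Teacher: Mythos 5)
Your argument is correct: tracing $D_{h}\cong S_{d}$ through the link, noting that $\vp$ restricts to an isomorphism on the strict transform, computing $F_{\wt Q}|_{(D_{h})_{\wt Q}}=l''_{\wt Q}$ so that $\psi$ contracts exactly the component of the reducible fibre other than the centre, and then matching $l$ with $e$ or $\wt f$ according to whether it meets $\Sigma_{d}$, is precisely the ``easy computation'' the paper omits. Nothing further is needed.
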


\subsection{The case of smooth $D_{f}|_{D_{h}}$}\label{sec:desiredseq2}

By Theorem \ref{thm:main4}, we are reduced to prove Theorem \ref{thm:maindiag-B} for the case where $(Q, D_{h}, D_{f})$ is of type $0$, i.e. where $D_{h}$ is a Hirzebruch surface.
First we construct a birational map which decreases the degree of $D_{h}$ as a Hirzebruch surface.

\begin{lem}\label{lem:type0}
Suppose that $D_{h} \cong \F_{d}$ for some $d>0$.
Set $\infty \coloneqq q(D_{h})$.
Then there are an another compactification $(Q', D'_{h}, D'_{f})$ of $\A^{3}$ compatible with a quadric fibration $q'$
and the composition $h \colon Q \dra Q'$ of elementary links with center along rulings in the fibers at $\infty$
such that $(D_{h})_{Q'}=D'_{h} \cong \F_{d-1}$ and $D'_{f}=(q')^{*}(\infty)$. 
In particular, $h$ preserves $Q \setminus (D_{f} \cup D_{h}) \cong \A^{3}$.

\begin{equation}
\xymatrix@C=15pt@R=15pt{
(Q, D_{h}, D_{f}) \ar@{.>}[r]^-{h} \ar[d]_{q}
&(Q', D'_{h}, D'_{f})\ar[d]_{q'}
\\ \P^{1} \ar@{=}[r]
&\P^{1},
}
\end{equation}
\end{lem}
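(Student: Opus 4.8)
The plan is to reduce the degree of $D_h \cong \F_d$ by one via a pair of elementary links between quadric fibrations, all taking place inside the single special fiber $D_f = q^{-1}(\infty) \cong \Q^2_0$. Since $(Q, D_h, D_f)$ is of type $0$, the restriction $D_f|_{D_h}$ is a smooth conic on $\Q^2_0$, and in particular $D_f$ does not contain the vertex-ruling distinguished by $D_h$. The key geometric observation I would exploit is that $D_f \cong \Q^2_0$ carries a one-parameter family of rulings $l$, and by Lemma~\ref{lem:q-q} each such ruling is the center of an elementary link $Q \xleftarrow{\vp} \wt Q \xrightarrow{\psi} Q'$ between quadric fibrations, which is an isomorphism away from $D_f$ and hence automatically preserves $Q \setminus (D_h \cup D_f) \cong \A^3$.

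\textbf{Main steps.} First I would verify that after performing the link along a suitably chosen ruling $l \subset D_f$, the strict transform $(D_h)_{Q'}$ is again a normal divisor with $2(D_h)_{Q'} \sim_{C} -K_{Q'}$ (using Lemma~\ref{lem:nnorm} and Lemma~\ref{lem:q-q}), and that the new special fiber $D'_f := (q')^{-1}(\infty)$ is again isomorphic to $\Q^2_0$. By Theorem~\ref{thm:main1}~(B), to conclude that $(Q', (D_h)_{Q'}, D'_f)$ is a valid compactification of $\A^3$ compatible with $q'$, it suffices to check the numerical/Hodge conditions there; since the link is an isomorphism outside $\infty$, the invariants $h^{1,2}(Q')=0$ and the smoothness of $(q'|_{(D_h)_{Q'}})$-fibers away from $\infty$ are inherited from $Q$. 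The heart of the argument is then a local computation at $\infty$: I would track how the self-intersection data governing the Hirzebruch type of $D_h$ transforms under the link. Concretely, using $D_f|_{D_h} \cong$ smooth conic and the intersection $(l \cdot D_h)_{Q}$ of the chosen ruling with $D_h$, I would show that $(D_h)_{Q'} \cong \F_{d-1}$ by comparing the normal bundle of the minimal section before and after, in the spirit of the computations in Lemma~\ref{lem:q-q} and Corollary~\ref{cor:type1}.

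\textbf{The main obstacle} I anticipate is choosing the ruling $l$ correctly and verifying that the link lowers, rather than raises or preserves, the degree of the Hirzebruch surface. On $D_f \cong \Q^2_0$ the two rulings in a fixed ruling-class meet the conic $D_f|_{D_h}$ transversally, but the effect of the elementary link on $D_h$ depends delicately on whether $l$ meets the minimal section $\Sigma_d$ of $D_h \cong \F_d$; this is precisely the dichotomy recorded in Corollary~\ref{cor:type1} for the type-$1$ case, and I expect the analogous choice here (taking $l$ to meet the $(+d)$-section transversally so that the elementary transformation on $D_h$ produces $\F_{d-1}$) to require a careful fiberwise analysis. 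I would organize this by restricting the whole link diagram to $D_h$ and identifying the induced birational self-map of the surface as an elementary transformation of ruled surfaces centered at a point off $\Sigma_d$, which drops the degree by one. Once $d$ is reduced to $0$, the final statement of the subsection (handled in the $d=0$ case, identifying $(Q, D_h, D_f)$ with Example~\ref{ex:0-B}) completes the reduction; the present lemma only asserts the single degree-lowering step, so I would stop after establishing $(D_h)_{Q'} \cong \F_{d-1}$ and the compatibility of $(Q', D'_h, D'_f)$.
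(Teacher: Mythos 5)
Your proposal follows essentially the same route as the paper: a pair of elementary links between quadric fibrations with centers along rulings in the fibers over $\infty$ chosen away from $\Sigma_{d}$, whose composite restricts on $D_{h}$ to an elementary transformation of the ruled surface $\F_{d}$ at a point off the minimal section, dropping the degree to $d-1$. The one imprecision is that a single link does not already yield $\F_{d-1}$: after the first link $D_{h}$ becomes $S_{d}$ (so the compactification is of type $1$, not type $0$), and it is only the second link, centered along the component of the new fiber trace disjoint from the strict transform of $\Sigma_{d}$, that produces $\F_{d-1}$ via Corollary \ref{cor:type1}.
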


\begin{proof}
Take the elementary link $f_{1} \colon Q \dra Q_{1}$ with center along a ruling of $D_{f} \cong \Q_{0}$ which is disjoint from $\Sigma_{d} \subset D_{h}$. 
Let $E$ be the exceptional divisor of the elementary link. 
Since $f_{1}$ induces the elementary transformation of $D_{h}$ with center a point outside $\Sigma_{d}$,
we get a compactification $(Q_{1}, (D_{h})_{Q_{1}}, E)$ of $\A^{3}$ such that $(D_{h})_{Q_{1}} \cong S_{d}$.

Now take the elementary link $f_{2} \colon Q_{1} \dra Q'$ with center along the irreducible component of $\Supp (E|_{(D_{h})_{Q_{1}}})$ which is disjoint from the strict transform of $\Sigma_{d}$ in $(D_{h})_{Q_{1}}$.
Then by Corollary \ref{cor:type1}, we get a compactification $(Q', D'_{h}, D'_{f})$ of $\A^{3}$ such that $D'_{h} \cong \F_{d-1}$. 
Hence $h \coloneqq f_{2} \circ f_{1}$ is the desired birational map.
\end{proof}

Repeated application of Lemma \ref{lem:type0} enables us to assume that $(Q, D_{h}, D_{f})$ satisfies $D_{h} \cong \F_{0}$.
Next we show that such a compactification is the same as $(Q', D_{h}', D_{f}')$ as in Example \ref{ex:0-B}.

\begin{lem}\label{lem:qstandard}
Suppose that $D_{h} \cong \F_{0}$.
Then $Q$ is the blow-up of $\Q^{3}$ along a smooth conic and $D_{h}$ is the exceptional divisor of the blow-up.
\end{lem}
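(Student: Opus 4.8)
The plan is to produce the morphism $Q\to\Q^{3}$ directly, realising it as the contraction of the second extremal ray of $\ol{\NE}(Q)$, and then to identify the target by the classification of Fano threefolds of large index.

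First I would record the numerical data. Since $D_{h}\cong\F_{0}$ we are in type $0$, so by Theorem \ref{thm:main1} (B) and Theorem \ref{thm:main2} (1) the restriction $D_{f}|_{D_{h}}$ is smooth and $q|_{D_{h}}\colon D_{h}\to\P^{1}$ is a $\P^{1}$-bundle; write $f\coloneqq F|_{D_{h}}$ for its fibre class ($F$ a general $q$-fibre) and $g$ for the complementary ruling of $\F_{0}$. By Lemma \ref{lem:sub-2} we have $\Pic Q=\Z F\oplus\Z D_{h}$ and $-K_{Q}\sim 2D_{h}+kF$ for some $k\in\Z$, together with $F^{3}=F^{2}\cdot D_{h}=0$ and $F\cdot D_{h}^{2}=(D_{h}|_{F})^{2}=2$. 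Choosing a $q$-section and forming the bisection $B$ of Lemma \ref{lem:q-p}, the vanishing $p_{a}(B)=p_{a}(C)=0$ (the first is established in the proof of Theorem \ref{thm:main1} (B)) gives $(-K_{Q})^{3}=40$ by Lemma \ref{lem:q-p} (3).

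Next I would pin down $k$ and the normal data of $D_{h}$. Adjunction $K_{D_{h}}=(K_{Q}+D_{h})|_{D_{h}}$ together with $K_{D_{h}}\sim-2f-2g$ yields $D_{h}|_{D_{h}}=(2-k)f+2g$, hence $D_{h}^{3}=(D_{h}|_{D_{h}})^{2}=8-4k$; expanding $(-K_{Q})^{3}=(2D_{h}+kF)^{3}=64-8k$ and comparing with $40$ forces $k=3$. Thus $D_{h}|_{D_{h}}=-f+2g$, so $D_{h}\cdot g=-1$ and $-K_{Q}\cdot g=1$, while $D_{h}\cdot f=2$ and $D_{h}\cdot\ell=1$ for a ruling $\ell$ of a general $q$-fibre. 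Since $\rho(Q)=2$, the cone $\ol{\NE}(Q)$ has exactly two extremal rays, one being $\R_{\ge0}[\ell]$; because any irreducible curve meeting the prime divisor $D_{h}$ negatively must lie in $D_{h}\cong\F_{0}$, on whose ruling classes $D_{h}$ is negative only along $[g]$, the class $[g]$ spans the second ray, which is $K_{Q}$-negative. Let $\pi\colon Q\to Y$ be its contraction: it is divisorial with exceptional locus $D_{h}$ (swept out by the $g$-ruling), contracting $D_{h}$ onto a smooth rational curve $\Gamma$, so by \cite[Theorem 3.3]{Mor} it is the blow-up of a smooth threefold $Y$ along $\Gamma$ with $E_{\pi}=D_{h}$.

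The decisive step is to identify $Y$. One checks that $D_{h}+F$ is nef with $(D_{h}+F)\cdot g=0$ and $(D_{h}+F)\cdot\ell=1$, so it descends to an ample generator $A$ of $\Pic Y$ with $\pi^{*}A=D_{h}+F$; combining $K_{Q}=\pi^{*}K_{Y}+D_{h}$ with $-K_{Q}\sim2D_{h}+3F$ gives $\pi^{*}(-K_{Y})=3(D_{h}+F)$, i.e.\ $-K_{Y}=3A$, so $Y$ is Fano of index at least $3$. By the Kobayashi--Ochiai bound the index of a smooth Fano threefold is at most $4$, with equality only for $\P^{3}$; the value $4$ is impossible here since $4H=3A$ has no solution in $\Pic\P^{3}=\Z H$, so the index is exactly $3$ and $Y\cong\Q^{3}$ with $A=\mc{O}_{\Q^{3}}(1)$. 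Finally, a fibre $f$ of $q|_{D_{h}}$ maps isomorphically onto $\Gamma$, whence $\deg_{A}\Gamma=A\cdot\Gamma=(D_{h}+F)\cdot f=2$ and $\Gamma\cong\P^{1}$ is smooth; thus $\Gamma$ is a smooth conic and $D_{h}=E_{\pi}$ is the exceptional divisor, as asserted. I expect the main obstacle to be the verification that $[g]$ is genuinely extremal and that the nef class $D_{h}+F$ descends to an \emph{ample} generator, i.e.\ controlling $\ol{\NE}(Q)$ precisely enough to run the Kobayashi--Ochiai identification.
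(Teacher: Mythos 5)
Your proposal is correct, but it identifies $Q$ by a genuinely different route than the paper. Both proofs begin identically: adjunction on $D_{h}\cong\F_{0}$ gives $D_{h}^{3}=8-4a$ and $(-K_{Q})^{3}=64-8a$, while Lemma \ref{lem:q-p} (3) together with $p_{a}(B)=p_{a}(C)=0$ gives $(-K_{Q})^{3}=40$, forcing $a=3$ and $-K_{Q}\sim 2D_{h}+3D_{f}$. From there the paper shows that $-K_{Q}$ is strictly nef (by restricting to $D_{h}$ and $D_{f}$), big and semiample, hence ample, and then identifies $Q$ as the blow-up of $\Q^{3}$ along a conic by looking it up in the Mori--Mukai table of Fano $3$-folds with $B_{2}=2$ and $(-K)^{3}=40$; the description of $D_{h}$ as $E_{h_{2}}$ then falls out of the two expressions for $-K_{Q}$. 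You instead bypass the classification table entirely: you locate the second extremal ray $\R_{\geq 0}[g]$ (the nefness of $D_{h}+F$, which you correctly flag as the point needing verification, is easily checked since $(D_{h}+F)|_{D_{h}}\sim 2g$ is nef on $\F_{0}$), contract it via Mori's theorem to a blow-up of a smooth $3$-fold $Y$ along a smooth curve $\Gamma$, and identify $Y\cong\Q^{3}$ by Kobayashi--Ochiai from $-K_{Y}=3A$ (index $4$ being excluded since $3A=4H$ is unsolvable in $\Pic\P^{3}$), with $A\cdot\Gamma=(D_{h}+F)\cdot f=2$ giving the conic. Your approach is more self-contained --- it needs only the cone/contraction machinery already used throughout the paper plus Kobayashi--Ochiai, rather than the full Mori--Mukai list --- at the cost of a more delicate analysis of $\ol{\NE}(Q)$; the paper's approach is shorter given that it already cites \cite[Table 2]{M-M} elsewhere. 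Both arguments are sound.
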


\begin{proof}
First let us show the ampleness of $-K_{Q}$.
By Lemma \ref{lem:sub-2}, we can take $a \in \Z$ such that $-K_{Q} \sim 2D_{h}+aD_{f}$.
By the adjunction formula, we have 
$D_{h}|_{D_{h}} \sim -K_{D_{h}}-aD_{f}|_{D_{h}}$.
Since $D_{h} \cong \F_{0}$, we have:
\begin{eqnarray}\label{eq:main3-1}
D_{h}^{3}
&=&(K_{D_{h}}+aD_{f}|_{D_{h}})^{2}\\
&=&(K_{D_{h}})^{2}+2a(K_{D_{h}} \cdot D_{f}|_{D_{h}})=8-4a.\nonumber\\
(-K_{Q})^3
&=&(2D_{h} + a D_{f})^3\label{eq:main3-2}\\
&=&8D_{h}^3+12a(D_{h}^2 \cdot D_{f})=64-8a.\nonumber
\end{eqnarray}

On the other hand, by Lemma \ref{lem:q-p} (3), it holds that $(-K_{Q})^{3}=40-(8p_{a}(B)+32p_{a}(C))$.
We have $C \cong \P^{1}$ by assumption. 
Combining Theorem \ref{thm:main1} (B), (\ref{eq:main1-3}) and (\ref{eq:main1-5}), we get $p_{a}(B)=0$.
Hence $(-K_{Q})^3=40$. 
Substituting this into (\ref{eq:main3-2}), we have $a=3$. 
Hence we have
\begin{equation}
-K_{Q} \sim 2D_{h}+3D_{f} \label{eq:7.3.4}
\end{equation}
and $-K_{Q}|_{D_{h}} \sim (2D_{h}+3D_{f})_{D_{h}} \sim -2K_{D_{h}}-3D_{f}|_{D_{h}} \sim 4\Sigma_{0}+f_{0}$, which is ample. 
Clearly $-K_{Q}|_{D_{f}}$ is also ample.

Suppose that $(-K_{Q} \cdot r) \leq 0$ holds for some curve $r \subset Q$.
Since both $-K_{Q}|_{D_{h}}$ and $-K_{Q}|_{D_{f}}$ are ample, 
(\ref{eq:7.3.4}) now shows that $r$ must be disjoint from any $q$-fiber, a contradiction.
Hence $-K_{Q}$ is strictly nef.
Also $-K_{Q}$ is big since $(-K_{Q})^{3}=40$ and is semiample by the base-point free theorem.
Since $-K_{Q}$ is strictly nef and semiample, it is ample.

Therefore $Q$ is a Fano quadric fibration with $(-K_{Q})^3=40$.
By \cite[Table 2]{M-M}, $Q$ is the blow-up of $\Q^{3}$ along a smooth conic, which is the first assertion.

Let $h_{2} \colon Q \to \Q^{3}$ be the blow-up morphism. 
Since $D_{f} \sim h_{2}^{*} \mc{O}_{\Q^{3}}(1) -E_{h_{2}}$ 
and $-K_{Q} \sim h_{2}^{*} \mc{O}_{\Q^{3}}(3)-E_{h_{2}} \sim 2E_{h_{2}} + 3D_{f}$,
the second assertion follows from (\ref{eq:7.3.4}).
\end{proof}

Now we can prove Theorem \ref{thm:maindiag-B}.

\begin{proof}[Proof of Theorem \ref{thm:maindiag-B}]
Suppose that $(Q, D_{h}, D_{f})$ is a compactification of $\A^{3}$ of type $m$.
Taking elementary links $m$ times as in Theorem \ref{thm:main4}, we may assume that $m=0$.
Repeated application of Lemma \ref{lem:type0} enables us to assume that $D_{h} \cong \F_{0}$.
Then $h_{1} \coloneqq \mathrm{id}_{Q}$ and $(Q', D'_{h}, D'_{f}) \coloneqq (Q, D_{h}, D_{f})$ satisfies all the assertion by Lemma \ref{lem:qstandard}. 
\end{proof}

\section*{Acknowledgement}
The author is greatly indebted to Professor Hiromichi Takagi, his supervisor, for his encouragement, comments, and suggestions. 
He wishes to express his gratitude to Professor Takashi Kishimoto and Adrien Dubouloz for his helpful comments and suggestions. 
He also would like to express his gratitude to Doctor Takeru Fukuoka for several helpful discussions concerning the technique of elementary links.

This work was supported by JSPS KAKENHI Grant Number JP19J14397 and 
the Program for Leading Graduate Schools, MEXT, Japan.


\begin{thebibliography}{RvdV60}

\bibitem[BD93]{B-D}
S.~M. Bhatwadekar and Amartya~K. Dutta.
\newblock On residual variables and stably polynomial algebras.
\newblock {\em Comm. Algebra}, 21(2):635--645, 1993.

\bibitem[Cor95]{Cor}
Alessio Corti.
\newblock Factoring birational maps of threefolds after {S}arkisov.
\newblock {\em J. Algebraic Geom.}, 4(2):223--254, 1995.

\bibitem[D'S88]{D'S}
Harry D'Souza.
\newblock Threefolds whose hyperplane sections are elliptic surfaces.
\newblock {\em Pacific J. Math.}, 134(1):57--78, 1988.

\bibitem[FN89a]{F-N89b}
Mikio Furushima and Noboru Nakayama.
\newblock The family of lines on the {F}ano threefold {$V_5$}.
\newblock {\em Nagoya Math. J.}, 116:111--122, 1989.

\bibitem[FN89b]{F-N89a}
Mikio Furushima and Noboru Nakayama.
\newblock A new construction of a compactification of {${\bf C}^3$}.
\newblock {\em Tohoku Math. J. (2)}, 41(4):543--560, 1989.

\bibitem[Fuj82]{Fuj}
Takao Fujita.
\newblock On the topology of noncomplete algebraic surfaces.
\newblock {\em J. Fac. Sci. Univ. Tokyo Sect. IA Math.}, 29(3):503--566, 1982.

\bibitem[Fuk18]{Fuk}
Takeru Fukuoka.
\newblock Relative linear extensions of sextic del pezzo fibrations.
\newblock {\em arXiv preprint arXiv:1803.01264}, 2018.

\bibitem[Fur86]{Fur86}
Mikio Furushima.
\newblock Singular del {P}ezzo surfaces and analytic compactifications of
  {$3$}-dimensional complex affine space {${\bf C}^3$}.
\newblock {\em Nagoya Math. J.}, 104:1--28, 1986.

\bibitem[Fur90]{Fur90}
Mikio Furushima.
\newblock Complex analytic compactifications of {${\bf C}^3$}.
\newblock {\em Compositio Math.}, 76(1-2):163--196, 1990.
\newblock Algebraic geometry (Berlin, 1988).

\bibitem[Fur93a]{Fur93b}
Mikio Furushima.
\newblock The complete classification of compactifications of {${\bf C}^3$}
  which are projective manifolds with the second {B}etti number one.
\newblock {\em Math. Ann.}, 297(4):627--662, 1993.

\bibitem[Fur93b]{Fur93a}
Mikio Furushima.
\newblock A new example of a compactification of {${\bf C}^3$}.
\newblock {\em Math. Z.}, 212(3):395--399, 1993.

\bibitem[Fur00]{Fur00}
Mikio Furushima.
\newblock A birational construction of projective compactifications of {$\bold
  C^3$} with second {B}etti number equal to one.
\newblock {\em Ann. Mat. Pura Appl. (4)}, 178:115--128, 2000.

\bibitem[Hir54]{Hir54}
Friedrich Hirzebruch.
\newblock Some problems on differentiable and complex manifolds.
\newblock {\em Ann. of Math. (2)}, 60:213--236, 1954.

\bibitem[HT12]{H-T}
Brendan Hassett and Yuri Tschinkel.
\newblock Spaces of sections of quadric surface fibrations over curves.
\newblock In {\em Compact moduli spaces and vector bundles}, volume 564 of {\em
  Contemp. Math.}, pages 227--249. Amer. Math. Soc., Providence, RI, 2012.

\bibitem[IF77]{I-F}
Shigeru Iitaka and Takao Fujita.
\newblock Cancellation theorem for algebraic varieties.
\newblock {\em J. Fac. Sci. Univ. Tokyo Sect. IA Math.}, 24(1):123--127, 1977.

\bibitem[Kis05]{Kis}
Takashi Kishimoto.
\newblock Compactifications of contractible affine 3-folds into smooth {F}ano
  3-folds with {$B_2=2$}.
\newblock {\em Math. Z.}, 251(4):783--820, 2005.

\bibitem[KZ99]{K-Z}
Sh. Kaliman and M.~Zaidenberg.
\newblock Affine modifications and affine hypersurfaces with a very transitive
  automorphism group.
\newblock {\em Transform. Groups}, 4(1):53--95, 1999.

\bibitem[Man66]{Man}
Ju.~I. Manin.
\newblock Rational surfaces over perfect fields.
\newblock {\em Inst. Hautes \'{E}tudes Sci. Publ. Math.}, (30):55--113, 1966.

\bibitem[Mar73]{Mar}
Masaki Maruyama.
\newblock On a family of algebraic vector bundles.
\newblock In {\em Number theory, algebraic geometry and commutative algebra, in
  honor of {Y}asuo {A}kizuki}, pages 95--146. Kinokuniya, Tokyo, 1973.

\bibitem[MM82]{M-M}
Shigefumi Mori and Shigeru Mukai.
\newblock Classification of {F}ano {$3$}-folds with {$B_{2}\geq 2$}.
\newblock {\em Manuscripta Math.}, 36(2):147--162, 1981/82.

\bibitem[Mor73]{Mor2}
Shigefumi Mori.
\newblock $\mathbb{A}^{2}$のrational ruled
  surfacesへの埋め込みについて ({A}n article about compactifications
  of {$\A^{2}$} into {H}irzebruch surfaces).
\newblock {\em Sūrikaisekikenkyūsho Kōkyūroku}, 183:31--50, 1973.

\bibitem[Mor82]{Mor}
Shigefumi Mori.
\newblock Threefolds whose canonical bundles are not numerically effective.
\newblock {\em Ann. of Math. (2)}, 116(1):133--176, 1982.

\bibitem[MS90]{Mul}
Stefan M\"{u}ller-Stach.
\newblock Compactifications of {${\bf C}^3$} with reducible boundary divisor.
\newblock {\em Math. Ann.}, 286(1-3):409--431, 1990.

\bibitem[Nag]{Nag2}
Masaru Nagaoka.
\newblock On compactifications of affine homology $3$-cells into del {P}ezzo
  fibrations with non-trivial $h^{1,2}$, in preparation.

\bibitem[Nag18]{Nag}
Masaru Nagaoka.
\newblock Fano compactifications of contractible affine 3-folds with trivial
  log canonical divisors.
\newblock {\em Internat. J. Math.}, 29(6):1850042, 33, 2018.

\bibitem[Pet89]{Pet89}
Thomas Peternell.
\newblock Compactifications of {${\bf C}^3$}. {II}.
\newblock {\em Math. Ann.}, 283(1):121--137, 1989.

\bibitem[PS88]{P-S88}
Thomas Peternell and Michael Schneider.
\newblock Compactifications of {${\bf C}^3$}. {I}.
\newblock {\em Math. Ann.}, 280(1):129--146, 1988.

\bibitem[RvdV60]{RV}
Reinhold Remmert and Ton van~de Ven.
\newblock Zwei {S}\"{a}tze \"{u}ber die komplex-projektive {E}bene.
\newblock {\em Nieuw. Arch. Wisk. (3)}, 8:147--157, 1960.

\bibitem[tDP90]{tDP}
Tammo tom Dieck and Ted Petrie.
\newblock Contractible affine surfaces of {K}odaira dimension one.
\newblock {\em Japan. J. Math. (N.S.)}, 16(1):147--169, 1990.

\bibitem[vdV62]{van}
A.~van~de Ven.
\newblock Analytic compactifications of complex homology cells.
\newblock {\em Math. Ann.}, 147:189--204, 1962.

\end{thebibliography}
\end{document}